\newtheorem{theorem}{Theorem}
\newtheorem{proposition}[theorem]{Proposition}
\newtheorem{lemma}[theorem]{Lemma}
\newtheorem{example}[theorem]{Example}
\theoremstyle{definition}
\newtheorem{definition}[theorem]{Definition}
\newtheorem{remark}[theorem]{Remark}
\date{}
\title{Higher Dimensional Lattice Walks: Connecting Combinatorial and Analytic Behavior}
\author{Stephen Melczer\thanks{University of Pennsylvania, Department of Mathematics, 209 S. 33rd Street, Philadelphia, PA 19104, (smelczer@sas.upenn.edu), \url{https://www.math.upenn.edu/\string~smelczer/}).} \and Mark C. Wilson\thanks{Department of Computer Science, University of Auckland, Private Bag 92019 Auckland, New Zealand (mc.wilson@auckland.ac.nz), \url{http://mcw.blogs.auckland.ac.nz/}).} }
\newcommand{\mD}{\ensuremath{\mathcal{D}}}
\newcommand{\mC}{\ensuremath{\mathcal{C}}}
\newcommand{\mG}{\ensuremath{\mathcal{G}}}
\newcommand{\mR}{\ensuremath{\mathcal{R}}}
\newcommand{\mS}{\ensuremath{\mathcal{S}}}
\newcommand{\mV}{\ensuremath{\mathcal{V}}}
\newcommand{\bi}{\ensuremath{\mathbf{i}}}
\newcommand{\bo}{\ensuremath{\mathbf{1}}}
\newcommand{\br}{\ensuremath{\mathbf{r}}}
\newcommand{\bw}{\ensuremath{\mathbf{w}}}
\newcommand{\by}{\ensuremath{\mathbf{y}}}
\newcommand{\bz}{\ensuremath{\mathbf{z}}}
\newcommand{\oS}{\ensuremath{\overline{S}}}
\newcommand{\ox}{\ensuremath{\overline{x}}}
\newcommand{\oy}{\ensuremath{\overline{y}}}
\newcommand{\oz}{\ensuremath{\overline{z}}}
\newcommand{\bzhat}{\bz_{\hat{k}}}
\newcommand{\bzht}[1]{\bz_{\hat{#1}}}
\newcommand{\bt}{\ensuremath{\boldsymbol \theta}}
\newcommand{\bzer}{\ensuremath{\mathbf{0}}}
\newcommand{\bone}{\ensuremath{\mathbf{1}}}
\newcommand{\sgn}{\operatorname{sgn}}
\newcommand{\btht}[1]{\bt_{\hat{#1}}}
\newcommand{\bp}{\ensuremath{\mathbf{p}}}
\newcommand{\bpht}[1]{\bp_{\hat{#1}}}
\newcommand{\St}{\tilde{S}}
\DeclareMathOperator{\gradlog}{\nabla_{\log}}
\newcommand{\diagr}[1]{
  \begin{tikzpicture}[scale=0.2]\makediagb{#1}\end{tikzpicture}
}
\def\testbb#1{\testbb@i#1,,\@nil}%
\def\testbb@i#1,#2,#3\@nil{%
  \draw (O) --++(#1);
  \ifx\relax#2\relax\else\testbb@i#2,#3\@nil\fi}
\newcommand{\makediagb}[1]{
    \coordinate (O) at (0,0); \coordinate (N) at (0,1);
    \coordinate (NE) at (1,1); \coordinate (E) at (1,0);
    \coordinate (SE) at (1,-1); \coordinate (S) at (0,-1);
    \coordinate (SW) at (-1,-1);\coordinate (W) at (-1,0);
    \coordinate (NW) at (-1,1); \coordinate (B1) at (1.2,1.2);
    \coordinate (B2) at (-1.2,-1.2);
    
    \draw (B1) --++(0,-2.4); \draw (B1) --++ (-2.4,0);
    \draw (B2) --++(0,2.4);  \draw (B2) --++ (2.4,0);   
    \testbb{#1}
}
\begin{document}
\maketitle

\begin{abstract}
We consider the enumeration of walks on the non-negative lattice $\mathbb{N}^d$, with steps defined by a set $\mS \subset \{-1, 0, 1\}^d \setminus \{\bzer\}$. Previous work in this area has established asymptotics for the number of walks in certain families of models by applying the techniques of analytic combinatorics in several variables (ACSV), where one encodes the generating function of a lattice path model as the diagonal of a multivariate rational function. Melczer and Mishna obtained asymptotics when the set of steps $\mS$ is symmetric over every axis; in this setting one can always apply the methods of ACSV to a multivariate rational function whose  set of singularities is a smooth manifold (the simplest case). Here we go further, providing asymptotics for models with generating functions that must be encoded by multivariate rational functions having non-smooth singular sets.  In the process, our analysis connects past work to deeper structural results in the theory of analytic combinatorics in several variables.  One application is a closed form for asymptotics of models defined by step sets that are symmetric over all but one axis. As a special case, we apply our results when $d=2$ to give a rigorous proof of asymptotics conjectured by Bostan and Kauers; asymptotics for walks returning to boundary axes and the origin are also given. 
\end{abstract}

\noindent{\em Subject classification:} 05A16

\noindent{\em Keywords}:  lattice path enumeration, kernel method, analytic combinatorics, D-finite, generating function

\section{Introduction}
\label{sec:intro}

Much modern research in enumeration concerns links between analytic function behaviour and combinatorial models.  When one has sufficient information about a generating function --- for instance, the locations and types of its singularities closest to the origin in the complex plane --- the theory of analytic combinatorics in one variable gives, almost automatically in many cases, the asymptotics of the related sequence (see the compendium text of Flajolet and Sedgewick~\cite{ FlSe09} for further details).  

When a generating function is given in terms of an expansion of a multivariate power series, however, much less is known.  Over the last two decades, several authors have been working towards the development of a theory of analytic combinatorics in several variables. This refers to methods for deriving asymptotics of a sequence
\[ b_n = [\bz^{n\br}]F(\bz) = \left[z_1^{n\cdot r_1} \cdots z_d^{n\cdot r_d}\right] F(z_1,\dots,z_d) := a_{n\, r_1, \dots , n\, r_d}\]
for some fixed vector $\br \in \mathbb{Z}^d$ and multi-dimensional sequence $(a_\bi)_{\bi \in \mathbb{Z}^d}$ such that 
\[ F(\bz) :=  \sum_{\bi \in \mathbb{Z}^d} a_\bi \bz^{\bi} \]
is a meromorphic function analytic in a specified domain. The theory, as it has developed, generally consists of two stages: first, one must find the \emph{contributing} singularities of $F(\bz)$, which are the singularities of $F(\bz)$ where local behaviour of the function dictates its coefficients' asymptotics.  Once these contributing points have been found, which is often the most difficult step of the analysis, one must then calculate the \emph{asymptotic contribution} of each point and sum the results.

In 2002, Pemantle and Wilson~\cite{PeWi02} derived asymptotics for the power series expansion of a rational function $F(\bz) = G(\bz)/H(\bz)$ admitting a finite number of contributing points, at which the variety of singularities is a complex manifold.  Two years later~\cite{PeWi04}, they extended this analysis to allow contributing points where the variety of singularities is the transverse union of complex manifolds.  Modern approaches incorporate techniques from differential and algebraic geometry, topology, and singularity theory; the interested reader is referred to the text of Pemantle and Wilson~\cite{PeWi13} or, for a more elementary introduction, the thesis of Melczer~\cite{Melczer2017}.

\subsection{Lattice walks in restricted regions}

The enumeration of lattice walks restricted to certain regions is a classical topic in combinatorics, tracing its roots back hundreds of years to work on what is now known as the ballot problem. In modern times, a large area of work on this topic centers around using the so-called \emph{kernel method} to express generating functions of large classes of models as positive series extractions or diagonals of multivariate rational functions, proving that the generating functions are (or are not) \emph{D-finite}; that is, determining whether they satisfy a linear differential equation with polynomial coefficients. Although there are a finite number of models with steps in $\{\pm1,0\}^2$ that are restricted to a quarter-plane, their combinatorics has been the subject of intense study in recent years~\cite{FaIaMa99,Bous05,KaKoZe09,Mish09,BoMi10,BoKa09,BoKa10,Rasc12,MiRe09,MeMi14a,MelczerMishna2016,BostanBousquet-MelouKauersMelczer2016,Bousquet-Melou2016,BernardiBousquet-MelouRaschel2016,BostanChyzakHoeijKauersPech2017,DreyfusHardouinRoquesSinger2018}. The models considered in this paper are higher-dimensional generalizations of this two-dimensional setting.

A lattice path model is determined by a finite set of steps $\mS\subset\mathbb{Z}^d$ and a region $\mathcal{P} \subset \mathbb{Z}^d$ to which the walks of the model are restricted.  A model whose set of allowed steps is symmetric over every axis is called \emph{highly symmetric}. Melczer and Mishna~\cite{MelczerMishna2016} combined the kernel method with techniques from analytic combinatorics in several variables to give asymptotics for the number of integer lattice walks restricted to an orthant (that is, $\mathcal{P} = \mathbb{N}^d$) when the set of steps $\mS$ is a subset of $\{\pm1,0\}^d \setminus \{\bzer\}$ and is highly symmetric.   The main result of their paper is the following.

\begin{theorem}[{Melczer and Mishna~\cite[Theorem 3.4]{MelczerMishna2016}}] \label{thm:MeMiAsm}
  Let $\mS \subset \{-1,0,1\}^d \setminus \{\mathbf{0}\}$ be a set of 
  steps that is symmetric over every axis and moves forwards and backwards in each coordinate.
  Then the number~$s_n$ of walks of length~$n$ taking steps in~$\mS$,
  beginning at the origin, and never leaving the positive orthant has
  asymptotic expansion
\begin{equation*}
s_n  =|\mS|^n \cdot n^{-d/2} \cdot  \left(\, \left(s^{(1)} \cdots s^{(d)}\right)^{-1/2} \pi^{-d/2} |\mS|^{d/2}+ O\left( \frac{1}{n} \right)\, \right),
\end{equation*}
where $s^{(k)}$ denotes the number of steps in $\mS$ that have $k^\text{th}$ coordinate $1$.
\end{theorem}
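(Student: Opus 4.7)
The plan is to encode $s_n$ as a coefficient of an explicit $(d+1)$-variable rational function $F = G/H$ whose singular variety is a smooth complex manifold, and then invoke the smooth-case asymptotic formula of Pemantle and Wilson~\cite{PeWi02}. The first step is the combinatorial reduction: because $\mS$ is invariant under the sign-flip group $(\mathbb{Z}/2)^d$, the $d$-dimensional reflection principle (equivalently, the orbit-sum method of Bousquet-M\'elou and Mishna, generalized to $d$ dimensions) expresses the count of walks from $\bzer$ to $\bi \in \mathbb{N}^d$ as an alternating sum over $(\mathbb{Z}/2)^d$ of counts of unrestricted walks on $\mathbb{Z}^d$. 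Summing over all endpoints and simplifying the resulting geometric series yields an explicit expression of $s_n$ as the coefficient of $\bx^{\bo} t^n$ in a rational function of the form $G(\bx)/(1 - t\,S(\bx))$, where $S(\bx) = \sum_{\boldsymbol{\sigma} \in \mS} \bx^{\boldsymbol{\sigma}}$ is the step polynomial. The denominator $H := 1 - t\,S(\bx)$ defines a smooth complex hypersurface.

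The second step is to locate the contributing critical point. The smooth-case critical-point equations reduce, after standard reformulation, to $x_k\,(\partial S/\partial x_k)(\bx^*) = 0$ for all $k$ together with $H(\bx^*, t^*) = 0$. The full reflection symmetry of $S$ forces $(\partial S/\partial x_k)|_{\bx = \bo} = 0$ for each $k$ (for every step $\boldsymbol{\sigma}$ the step obtained by flipping the sign of $\sigma_k$ is also in $\mS$, so the contributions cancel in pairs), so $(\bx^*, t^*) = (\bo, 1/|\mS|)$ is a critical point. The hypothesis that $\mS$ moves both forwards and backwards in every coordinate provides the aperiodicity needed to rule out competing contributions from other points on the unit polytorus.

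The third step is to evaluate the smooth asymptotic formula at $(\bx^*, t^*)$. The formula yields a leading-order asymptotic of the form
\[ s_n \sim \frac{G(\bx^*) \cdot |\mS|^n}{(2\pi n)^{d/2} \sqrt{\det \mathcal{H}}} \cdot (\text{explicit normalization}), \]
where $\mathcal{H}$ is the Hessian of an explicit parametrization of $\mV(H)$ in logarithmic coordinates at $\bx = \bo$. The key structural fact is that the invariance of $S$ under $x_k \mapsto x_k^{-1}$ for each $k$ separately forces the logarithmic Hessian to be \emph{diagonal} at $\bx = \bo$, with $k$-th diagonal entry proportional to $s^{(k)}/|\mS|$. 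Consequently the determinant factors cleanly, and combining this with the explicit numerator $G(\bx^*)$ and the overall normalization yields the stated constant $(s^{(1)}\cdots s^{(d)})^{-1/2}\pi^{-d/2}|\mS|^{d/2}$.

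The main obstacle is the explicit Hessian computation together with the careful bookkeeping of all normalization factors: the value of $G(\bx^*)$, the factor $H_t(\bx^*, t^*) = -|\mS|$, and the various powers of $2$ arising from the change to logarithmic coordinates must all combine precisely to produce the stated constant. A secondary technical point is the rigorous application of smooth ACSV in the ``boundary'' direction $(0, \ldots, 0, 1) \in \mathbb{Z}^{d+1}$, which may require a preliminary change of variables converting the problem to a genuine main-diagonal extraction before the Pemantle-Wilson formula can be invoked verbatim.
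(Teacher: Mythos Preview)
Your overall strategy matches the paper's outline exactly: the paper states that Melczer and Mishna used the kernel method to obtain the diagonal expression
\[
F(\bone,t) = \Delta\left(\frac{(1+z_1)\cdots(1+z_d)}{1 - t(z_1\cdots z_d)S(\bz)}\right),
\]
and then applied the smooth-point formula of Pemantle and Wilson~\cite{PeWi02}. Your identification of the critical point $(\bone,1/|\mS|)$ and the diagonality of the logarithmic Hessian (with $k$th entry proportional to $s^{(k)}/|\mS|$) are the correct structural observations.

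There is one genuine misstep. You write that the forward/backward hypothesis ``provides the aperiodicity needed to rule out competing contributions from other points on the unit polytorus.'' This is not how the argument works, and in fact it is generally false that $(\bone,1/|\mS|)$ is the only minimal critical point. By the triangle-inequality argument (compare the proof of Theorem~\ref{thm:contrib}), every point $(\bw,t)$ with $\bw \in \{\pm1\}^d$ and $|w_1\cdots w_d\,S(\bw)| = S(\bone)$ is a minimal critical point on the same torus; such points exist in many models (e.g.\ $\mS = \{N,S,E,W\}$ in dimension~$2$ has the additional critical point $(-1,-1,-1/4)$). What actually eliminates their contribution to dominant asymptotics is that the numerator $G(\bz) = (1+z_1)\cdots(1+z_d)$ vanishes at every such $\bw \neq \bone$. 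The forward/backward hypothesis is used elsewhere, to guarantee that $S(\bz)$ has a positive critical point and nondegenerate Hessian (cf.\ Lemma~\ref{lem:minimum}). Once you replace the aperiodicity claim with the numerator-vanishing argument, your outline is complete and coincides with the paper's.
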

In order to get this result, Melczer and Mishna used the kernel method to derive the expression 
\[ F(t) = \Delta \left(\frac{G(\bz,t)}{H(\bz,t)}\right) = \Delta\left(\frac{(1+z_1)\cdots(1+z_d)}{1 - t(z_1\cdots z_d)S(\bz)}\right) \]
for the generating function $F(t)$ counting the number of walks of a given length defined by the model, where $\Delta$ is the diagonal operator defined in Section~\ref{sec:diag} below. After verification of certain technical conditions, Theorem~\ref{thm:MeMiAsm} then follows from the main result of the original paper of Pemantle and Wilson~\cite{PeWi02}. The symmetry condition on the step set $\mS$ was chosen by Melczer and Mishna precisely because it leads to a smooth \emph{singular variety}, the set of singular points of $G/H$, defined by the zero set of $H$. 

Although not originally considered by Melczer and Mishna, Theorem~\ref{thm:MeMiAsm} can be easily extended~\cite[Chapter 7]{Melczer2017} to handle positively weighted step sets with weights that are symmetric over every axis. Generalizations of this work have used multivariate singularity analysis to enumerate weighted walks~\cite{CourtielMelczerMishnaRaschel2017} and walks with step coordinates having absolute value greater than one~\cite{BostanBousquet-MelouMelczer2018}. 

\subsection{Our contributions}
Building on the conference paper of Melczer and Wilson~\cite{MelczerWilson2016}, in this work we generalize the results of Melczer and Mishna by giving asymptotics of lattice path models, restricted to the positive orthant, whose set of allowable steps is symmetric over all but one axis.  The formulae we obtain give explicit and fairly simple descriptions of the exponential rate, leading order and leading coefficient in terms of the basic data of the walk step set $\mS$. The \emph{drift} of a walk, the vector sum of all the steps in $\mS$, plays a crucial role in asymptotics; owing to symmetry, only one coordinate of the drift may be non-zero and we refer to a walk as having negative, zero, or positive drift depending on the sign of this coordinate.

Additionally, we show how the arguments of Melczer and Mishna's analysis fit into larger structure theorems about the singularities of multivariate rational functions and their asymptotic expansions. In the negative and zero drift cases, substantial extra work is needed because the expected leading order coefficient for generic problems of this dimension turns out to vanish. Unfortunately, due to this vanishing and other degeneracies in integrals that must be asymptotically approximated, we are currently unable to determine asymptotics for the general zero drift case. Such models will be the subject of future study.

Furthermore, we provide the first rigorous proofs of the guessed asymptotics of Bostan and Kauers~\cite{BoKa09} on 2D walks restricted to the non-negative quadrant, completing the outline in Melczer and Wilson~\cite{MelczerWilson2016}. Our analysis uncovers and explains periodicity of the asymptotic coefficients in the negative drift case, which was not noted in \cite{BoKa09}, and we give asymptotics for the number of walks returning to the boundary axes and the origin. Around the same time as the conference paper of Melczer and Wilson~\cite{MelczerWilson2016}, Bostan et al.~\cite{BostanChyzakHoeijKauersPech2017} rigorously gave annihilating differential equations for the generating functions of these lattice path models. Using these differential equations they were able to prove some of the guessed asymptotics of Bostan and Kauers~\cite{BoKa09}, however due to issues related to the decidability of asymptotics for coefficients of D-finite functions they were unable to prove all asymptotics. We discuss the difficulties they faced, and how our results fit into this context, in Section~\ref{sec:computability}. An accompanying Maple worksheet verifying our calculations can be found online\footnote{\url{https://github.com/smelczer/HigherDimensionalLatticeWalks}}.

\subsection{Past work}

Lattice path models restricted to a halfspace have algebraic generating functions that can be explicitly determined~\cite{Gessel1980}, leading to strong asymptotic results~\cite{BaFl02}. For this reason, much attention has been devoted to walks in quadrants and related cones such as orthants. Early combinatorial works in this area include Kreweras~\cite{Krew65} and Gessel~\cite{Gessel1986}; in 2005, Bousquet-M{\'e}lou~\cite{Bous05} introduced the \emph{algebraic kernel method}, on which our formal series setup is heavily based, to study a quadrant lattice path model stemming from the work of Kreweras. Gessel and Zeilberger~\cite{GeZe92} gave representations for lattice path generating functions in so-called Weyl chambers in arbitrary dimension, which are equivalent to the diagonal representations of Melczer and Mishna~\cite{MelczerMishna2016} for the Weyl chamber $A_1^d$. This has been a fruitful area of research: see also Zeilberger~\cite{Zeilberger1983}, Grabiner and Magyar~\cite{GrMa93}, Tate and Zelditch~\cite{TateZelditch2004}, and Feierl~\cite{Feierl2014,Feierl2018}. The systematic combinatorial enumeration of walks in a quadrant was popularized by Bousquet-M{\'e}lou and Mishna~\cite{BoMi10}, following work of Petkov\v{s}ek~\cite{Petkovsek1998}, Bousquet-M{\'e}lou and Petkov\v{s}ek~\cite{Bousquet-MelouPetkovsek2003}, and Mishna~\cite{Mishna2007}, among others.

Walks in quadrants and orthants have also been long studied from a probabilistic perspective. In one approach, developed in part for problems arising in queuing theory, a singularity analysis of solutions to functional equations satisfied by lattice path generating functions yields analytic and asymptotic information. The text of Fayolle et al.~\cite{FaIaMa99} gives a detailed view on the techniques involved, some of which inspired Bousquet-M{\'e}lou's creation of the algebraic kernel method; see also Maly\v{s}ev~\cite{Malysev1971} for an early history. The lattice path models we study have asymptotics of the form $C \, n^{\alpha} \, \rho^n$ for constants $\alpha$ and $\rho$, where $C$ is constant or depends only on the periodicity of $n$.  Fayolle and Raschel~\cite{FaRa12} used these techniques to outline a method that, in principle, allows one to calculate the exponential growth $\rho$ for many quadrant models.

Another probabilistic approach to lattice path enumeration is to use local limit theorems and/or approximate discrete walks by scaling limits such as multidimensional Brownian motion. For a large variety of step sets and restricting cones, including orthants, Denisov and Wachtel~\cite{DenisovWachtel2015} give techniques for determining the exponential growth $\rho$ and exponent $\alpha$ for the number of walks that begin and end at the origin: their asymptotic formulas are given by an explicit expression involving the smallest positive eigenvalue of the Laplace-Beltrami operator on a sphere. When the step set under consideration has zero drift, these techniques also give $\rho$ and $\alpha$ for the number of walks ending anywhere in the restricting cone. Among other results, Duraj~\cite[Example~7]{Du14} determines $\rho$ and $\alpha$ for quadrant walks ending anywhere in the quadrant when the drift has negative coordinates, and Garbit and Raschel~\cite{GarbitRaschel2016} give the exponential growth $\rho$ for walks ending anywhere in the restricting cone under no restriction on the drift of a model. We note that using these probabilistic techniques it is very difficult, if not impossible, to determine the leading asymptotic constant $C$ or to determine higher order asymptotic terms, as our approach provides (in a less general setting).

Finally, lattice path enumeration has been studied through the lens of computer algebra. Among the many results in this area we mention: Kauers et al.~\cite{KaKoZe09}, which proved a longstanding open problem on the enumeration of certain quarter plane walks using creative telescoping techniques; Bostan and Kauers~\cite{BoKa09}, which computationally guessed certain differential equations satisfied by the generating functions of quarter plane models and used this to guess the asymptotics we prove in Section~\ref{sec:2DWalks}; and Bostan et al.~\cite{BoRaSa14}, which used the work of Denisov and Wachtel mentioned above to create algorithms explicitly determining the exponential growth $\rho$ and exponent $\alpha$ for quarter plane models.

The enumeration of lattice walks is a thriving area of enumerative combinatorics, with too many results to explicitly mention here. Those looking for additional resources can investigate the texts of Mohanty~\cite{Mohanty1979} and Narayana~\cite{Narayana1979}, and the survey of Krattenthaler~\cite[Chapter 10]{Bona2015}.

\subsection{Organization}

We begin in Section~\ref{sec:Results} by discussing our main results and some illustrative examples. Section~\ref{sec:KernelMethod} then gives an overview of the kernel method applied to lattice path enumeration, and shows how it can be used to derive expressions for lattice path generating functions that are amenable to the techniques of analytic combinatorics in several variables.  Unlike the previous work of Melczer and Mishna --- where complete symmetry of the step sets under consideration simplified the required manipulations of the kernel method --- care must be taken here when manipulating diagonal and positive sub-series extractions in iterated Laurent series rings.  Section~\ref{sec:contrib}  details the general methods of analytic combinatorics in several variables, and outlines how the asymptotic analysis will proceed; in Proposition~\ref{prop:minpts} we give an explicit description of the contributing singularities for the models under consideration.  We derive asymptotics using this characterization in Section~\ref{sec:Asym}. This work divides naturally into three cases: the positive, negative and zero drift, listed in increasing order of difficulty (as mentioned above, we do not treat the general zero drift case here; see Section~\ref{sec:zerodrift} for more information). Detailed computations needed are collected in Appendix~\ref{appendixA}. Section~\ref{sec:2DWalks} proves asymptotics of 2D walks restricted to the non-negative quadrant.  Section~\ref{sec:Conclusion} discusses extensions and directions for future research.

A summary of results is displayed in Table~\ref{tab:summary}. It has previously been observed~\cite{FaRa12,CourtielMelczerMishnaRaschel2017} that the sub-exponential order term $n^\alpha$ appearing in asymptotics for the number of walks in a lattice path model has some correlation with the drift of its steps. This phenomenon occurs here: each coordinate where the drift is negative corresponds to a contribution of $n^{-3/2}$ to dominant asymptotics, a positive drift coordinate does not effect the order term, and a zero drift coordinate corresponds to an asymptotic contribution of at most $n^{-1/2}$ (depending on whether or not the walk is highly symmetric).

\begin{table}[h]
\centering
\begin{tabular}{|llllll|}
\hline
Drift & Exists for & Exp. rate & Order & Geometry & Theorem\\ \hline
positive & $d\geq 2$ & $|S|$ & $n^{-(d-1)/2}$ & nonsmooth & Theorem~\ref{thm:PosAsm} \\
negative & $d\geq 2$ &$< |S|$  & $n^{-1-d/2}$ & smooth & Theorem~\ref{thm:NegAsm} \\
zero (h.s.) &$d\geq 2$ &$|S|$ &$n^{-d/2}$ & smooth & \cite[Thm 3.4]{MelczerMishna2016}\\
zero (not h.s.) & $d\geq 3$ & $|S|$ & {\footnotesize$O(n^{1/2 - d/2})$} & nonsmooth & Conjectural \\
\hline
\end{tabular}
\caption{Summary of results: exponential rate and leading asymptotic order. Here ``h.s." means ``highly symmetric" (step set is symmetric over every axis).} \label{tab:summary}
\end{table}

\section{Main Results and Examples}
\label{sec:Results}
In order to simplify equations, we fix a dimension $d\in\mathbb{N}$ and use the following notation multi-index:
\[ \oz_i= z_i^{-1}; \qquad \bz = (z_1, \dots, z_d); \qquad \bi =( i_1,i_2,\dots,i_d)\in \mathbb{Z}^d; \]
\[ \bz^{\bi} = z_1^{i_1}\cdots z_d^{i_d}; \qquad \bz_{\hat{k}} := (z_1,\dots,z_{k-1},z_{k+1},\dots,z_d). \]

We consider walks in dimension $d$ defined by a (finite) set $\mS \subset \{\pm1,0\}^d \setminus \{\bzer\}$ of weighted steps, where $\bi \in \mS$ is given real weight $w_{\bi} \geq 0$, such that
\begin{itemize}
\item there exists some step forwards and some step backwards in the direction of each coordinate axis:
\[ \text{For all $j=1,\dots,d$ there exists $\bi \in \mS$ with $\bi_j = 1$ and $w_{\bi} \neq 0$}; \] 
\[ \text{For all $j=1,\dots,d$ there exists $\bi \in \mS$ with $\bi_j = -1$ and $w_{\bi} \neq 0$}. \] 
\item the weighting $w_{\bi}$ is symmetric over all axes except one;
\item each walk is confined to the non-negative orthant $\mathbb{N}^d$.
\end{itemize}

The \emph{(weighted) characteristic polynomial} of $\mS$ is the Laurent polynomial
\[ S(\bz) = \sum_{\bi \in \mS} w_{\bi} \bz^\bi. \]
We may assume without loss of generality that the axis of non-symmetry is $z_d$.  In other words, our step set $\mS$ is such that for each $j$ with $1\leq j \leq d-1$,
\[ S(z_1,\dots,z_{j-1},\oz_j,z_{j+1}, \dots, z_d) = S(\bz), \]
so we may write
\[ S(\bz) = \oz_d A\left(\bzht{d}\right) + Q\left(\bzht{d}\right) + z_d B\left(\bzht{d}\right)   \]
for Laurent polynomials $A,B,$ and $Q$ that are symmetric in their variables. Note that $S(\bone) = |\mS|$ is the size of the step set when each step has weight $1$.

Also important is the \emph{drift} $B(\bone) - A(\bone)$ of a walk with respect to the $d$-axis, the weight of steps in the positive $z_d$ direction minus the weight of steps in the negative $z_d$ direction. For our models the sign of the drift will correspond to different asymptotic regimes; in general the relationship between drift and asymptotics is more nuanced~\cite[Section 6.4]{CourtielMelczerMishnaRaschel2017}. Although the models we consider start at the origin, it is possible to modify our approach with minimal overhead to start at any $\bi \in \mathbb{N}^d$. In fact, one can treat the starting point $\bi$ as a parameter that will appear only in the leading constant of dominant asymptotics for the number of walks, and this approach can be used to construct discrete harmonic functions~\cite{CourtielMelczerMishnaRaschel2017}.

\subsection{Positive drift models}
\label{ss:pos}
For $1\leq k \leq d-1$, define $b_k$ to be the total weight of the steps moving forwards (or backwards) in the $k$th coordinate,
\[ b_k = \sum_{\bi \in \mS, i_k=1} w_{\bi} = \sum_{\bi \in \mS, i_k=-1} w_{\bi}. \]
Our main asymptotic result for positive drift models is the following. 

\begin{theorem}[Positive Drift Asymptotics]
\label{thm:PosAsm} 
Let $\mS$ be a (weighted) step set that is symmetric over all but one axis and takes a step forwards and backwards in each coordinate.  If $\mS$ has positive drift, then then number of walks of length $n$ that never leave the non-negative orthant satisfies
\begin{equation} s_n = S(\bone)^n \cdot n^{\frac{-(d-1)}{2}} \cdot \left[ \left(1 - \frac{A(\bone)}{B(\bone)}\right) \left(\frac{S(\bone)}{\pi}\right)^{\frac{d-1}{2}} \frac{1}{\sqrt{b_1 \cdots b_{d-1}}} \right]\left(1 + O(n^{-1})\right). \label{eqn:PosAsm} \end{equation}
\end{theorem}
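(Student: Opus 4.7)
The plan is to apply the kernel method from Section~\ref{sec:KernelMethod} to write $F(t)$ as the diagonal of a rational function $G(\bz,t)/H(\bz,t)$ where the denominator factors as $H = H_1 \cdot H_2$. The factor $H_1 = 1 - t\, z_1 \cdots z_d\, S(\bz)$ is the standard kernel (corresponding to the generating function of walks of fixed length on $\mathbb{Z}^d$), while $H_2$ is an extra factor coming from the incomplete symmetrization in the $z_d$ direction. Because the step set is symmetric in $z_1,\ldots,z_{d-1}$ but not in $z_d$, the usual fully-symmetric orbit sum from~\cite{MelczerMishna2016} cannot be applied to $z_d$; instead one must perform a positive-series extraction in $z_d$ that produces a linear factor (of the form $1 - \text{(something in $z_d$)}$) in the denominator. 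Thus the singular variety $\mV = \{H=0\}$ is a union of two smooth sheets meeting along a codimension-$2$ subvariety, and the geometry is no longer smooth (as indicated in Table~\ref{tab:summary}).

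By Proposition~\ref{prop:minpts} the unique contributing singularity in the positive drift case is $(\bone, 1/S(\bone))$, which lies on the transverse intersection $\mV_1 \cap \mV_2$. The key calculations I would carry out here are: (i) verify that $\mV_1$ and $\mV_2$ intersect transversely at this point by checking that $\nabla_{\log} H_1$ and $\nabla_{\log} H_2$ are linearly independent at $(\bone, 1/S(\bone))$; (ii) verify minimality, i.e.\ that no other point of $\mV$ has coordinatewise-smaller modulus --- this will follow from log-convexity of the amoeba together with positivity of the numerator $G$ and of the coefficients of $z_1\cdots z_d\, S(\bz)$; and (iii) check the positive drift hypothesis $B(\bone) > A(\bone)$ is precisely what rules out an alternative dominant singularity on $\mV_1$ alone (a smooth minimal point with a different coordinate direction), and what ensures the leading constant below is strictly positive.

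With a transverse double point in ambient complex dimension $d+1$, the multiple point asymptotic formula of Pemantle and Wilson~\cite{PeWi04,PeWi13} yields a contribution of order $n^{-(d+1-2)/2} = n^{-(d-1)/2}$, matching the claimed exponent. The leading constant is computed in two stages. First, one takes an iterated residue along $H_2 = 0$ to eliminate $z_d$; the resulting Jacobian $\partial H_1/\partial z_d$ evaluated at the intersection point contributes exactly $B(\bone) - A(\bone)$, and combined with the evaluation of $G$ and $H_2$ produces the prefactor $1 - A(\bone)/B(\bone)$. Second, a saddle-point analysis in the remaining smooth variables $z_1, \ldots, z_{d-1}$ is carried out as in the proof of Theorem~\ref{thm:MeMiAsm}; by the symmetry of $S$ in these coordinates the logarithmic Hessian is diagonal at $\bone$ with $k$th diagonal entry proportional to $b_k/S(\bone)$, yielding the $\sqrt{b_1 \cdots b_{d-1}}$ in the denominator and the factor $(S(\bone)/\pi)^{(d-1)/2}$ from the standard Gaussian integral, while the factors $(1+z_k)$ in $G$ evaluated at $z_k=1$ give powers of $2$ that are absorbed into the normalization.

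The main obstacle will be rigorously justifying the residue reduction at the nonsmooth point and confirming that the explicit form of $H_2$ produced by the kernel method combines correctly with the derivative of the kernel $H_1$ to give the clean factor $1 - A(\bone)/B(\bone)$. Secondary care is required to ensure the series manipulations involved in the kernel method (diagonal extraction commuting with positive-part extraction in an iterated Laurent ring) are valid in the relevant domain of convergence, and that no spurious singularities closer to the origin appear from the manipulation --- both issues that do not arise in the fully symmetric case of~\cite{MelczerMishna2016} and which motivate the careful formal setup alluded to in the introduction.
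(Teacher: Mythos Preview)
Your overall strategy matches the paper's: represent $F(\bone,t)$ as a rational diagonal, locate the contributing singularity on a transverse intersection of two smooth sheets, and apply the Pemantle--Wilson multiple-point formula (Proposition~\ref{prop:PosAsm}). However, several structural details are off. The denominator produced by Theorem~\ref{thm:diag2} has \emph{three} factors $H = H_1 H_2 H_3$, not two: the kernel factor is $H_1 = 1 - t\, z_1\cdots z_d\, \oS(\bz)$ (note $\oS$, not $S$ --- the diagonal substitution of Lemma~\ref{lem:diagsubs} sends each $z_j \mapsto \oz_j$), the linear factor you have in mind is $H_3 = 1 - z_d$, and there is an additional factor $H_2 = 1 - t\,z_1\cdots z_d\bigl(Q(\bzht{d})+z_d A(\bzht{d})\bigr)$ which does \emph{not} vanish at the contributing point $\bp_2 = (\bone,1/S(\bone))$ and therefore acts as part of the numerator on the stratum $\mV_1 \cap \mV_3$.

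This matters for your computation of the leading constant. The factor $1 - A(\bone)/B(\bone)$ does \emph{not} come from $\partial H_1/\partial z_d$: in log coordinates at $\bp_2$ that derivative involves $Q(\bone)+2A(\bone)$, not $B-A$. In the paper the Jacobian-type determinant $\det\Gamma$ built from $\gradlog H_1$ and $\gradlog H_3$ is drift-independent; the drift factor arises instead from evaluating $G/H_2$ at $\bp_2$, where $G(\bp_2) = 2^{d-1}\bigl(B(\bone)-A(\bone)\bigr)/S(\bone)$ and $H_2(\bp_2) = B(\bone)/S(\bone)$, giving $G/H_2 = 2^{d-1}\bigl(1-A(\bone)/B(\bone)\bigr)$. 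Your Hessian and exponent arguments (diagonal Hessian with entries proportional to $b_k/S(\bone)$, order $n^{-(d-1)/2}$) are otherwise in line with the paper.
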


Note that the result is trivial to apply to any given model, and is general enough to handle families of models in varying dimension.

\begin{example}
\label{eg:posdrift}
Consider the step set where $B\left(\bzht{d}\right) = \prod_{j<d} (z_j + \oz_j)$, $Q\left(\bzht{d}\right)=0$, and $A\left(\bzht{d}\right)=1$, so each step has weight $1$. Then
\[ s_n = \left(1+2^{d-1}\right)^n \cdot n^{\frac{-(d-1)}{2}} \cdot \left[ \frac{2^{d-1} - 1}{(2^d\pi)^\frac{d-1}{2}} \right]\left(1 + O(n^{-1})\right). \]
\end{example}

Theorem~\ref{thm:PosAsm} is proved in Section~\ref{ss:posdrift}.

\subsection{Negative drift models}
\label{ss:neg}
Dominant asymptotics in the negative drift case are given by adding the asymptotic contributions of a finite collection of points. Let $\rho = \sqrt{\frac{A(\bone)}{B(\bone)}}$, and for each $1 \leq k \leq d-1$ define
\[ b_k(\bzht{k}) := [z_k]S(\bz) = [z_k^{-1}]S(\bz). \]
Furthermore, define
\[ C_{\rho} := \frac{S(\bone,\rho) \, \rho}{2\, \pi^{d/2}\, A(\bone) (1-1/\rho)^2} \cdot \sqrt{\frac{S(\bone,\rho)^d}{\rho \, b_1(\bone,\rho) \cdots b_{d-1}(\bone,\rho) \cdot B(\bone)}}\]
and let $C_{-\rho}$ be the constant obtained by replacing $\rho$ by $-\rho$ in $C_{\rho}$ (the term in the square-root will always be real and positive, so there is no ambiguity).

\begin{theorem}[Negative Drift Asymptotics]
\label{thm:NegAsm}
Let $\mS$ be a negative drift (weighted) step set that is symmetric over all but one axis and takes a step forwards and backwards in each coordinate.  If $Q(\bzht{d}) \neq 0$ (i.e., if there are steps in $\mS$ having $z_d$ coordinate $0$) then the number of walks of length $n$ that never leave the non-negative orthant satisfies
\[ s_n =  S(\bone,\rho)^n \cdot n^{-d/2-1} \cdot C_{\rho} \left(1 + O(n^{-1})\right) . \]
If $Q(\bzht{d}) = 0$ then the number of walks of length $n$ that never leave the non-negative orthant satisfies
\[
s_n = n^{-d/2-1} \cdot {\Big[} S(\bone,\rho)^n \cdot C_{\rho} + S(\bone,-\rho)^n \cdot C_{-\rho} {\Big]}\left(1 + O(n^{-1})\right).
\]
\end{theorem}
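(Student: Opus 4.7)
The plan is to apply the framework of analytic combinatorics in several variables (ACSV) to the diagonal representation of the generating function produced by the kernel method in Section~\ref{sec:KernelMethod}. Concretely, the kernel method yields an expression of the form $\sum_n s_n t^n = \Delta(G(\bz,t)/H(\bz,t))$ where $H = 1 - t(z_1 \cdots z_d) S(\bz)$ and $G$ is an orbit-type sum over the symmetry group acting on $z_1,\dots,z_{d-1}$. First I would specialize Proposition~\ref{prop:minpts} to the negative drift regime to pin down the contributing singularities: since $A(\bone) > B(\bone)$, the critical point equations have a solution with $z_k = 1$ for $k<d$ and $z_d = \rho = \sqrt{A(\bone)/B(\bone)} < 1$. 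When $Q(\bzht{d}) = 0$, the Laurent polynomial $S(\bz)$ is even in $z_d$, introducing a $\mathbb{Z}/2$-periodicity and a second contributing point with $z_d = -\rho$. In both cases the minimal points are smooth points of the variety $\{H=0\}$, matching the ``smooth'' entry of Table~\ref{tab:summary}.

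Next I would apply the standard smooth-case ACSV asymptotic formula (from~\cite{PeWi02,PeWi13}) at each contributing point. The naive application would produce a leading term of order $S(\bone,\rho)^n \cdot n^{-d/2}$ with a constant proportional to $G$ evaluated at the critical point divided by a Hessian determinant in the appropriate log-coordinates. The crucial observation — which is why the order is $n^{-d/2-1}$ rather than $n^{-d/2}$ — is that this ``generic'' leading constant vanishes: the numerator $G$, being a sum over a nontrivial reflection group acting on $(z_1,\dots,z_{d-1})$, has a zero of order at least one at $(1,\dots,1,\rho)$. Establishing this vanishing cleanly, preferably by factoring out the explicit $(z_k-1)$ (or $(1-\oz_k)$) factors produced by the orbit sum, is the first technical step.

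After identifying the vanishing, the real work is a refined saddle-point analysis to extract the next-order term. I would parametrize the Cauchy integral over a product-of-circles contour passing through the minimal point(s), make the substitution $z_k = e^{i\theta_k/\sqrt n}$ for $k<d$ and $z_d = \rho\, e^{i\theta_d/\sqrt n}$, and Taylor-expand the integrand. Because $G$ vanishes to first order, the $O(1)$ piece of the expanded numerator is zero, so the integral is driven by the quadratic part of $G$ against the Gaussian coming from the quadratic part of $\log H$. This produces the $n^{-1}$ correction, and tracking the combinatorial factors — the Hessian determinant, the evaluations $b_k(\bone,\rho)$, the overall factor from $\partial_{z_d} H$ at the critical point, and the orbit-sum structure of $G$ — yields exactly the constant $C_\rho$ stated in the theorem. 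In the $Q = 0$ case, the same saddle-point computation at $(1,\dots,1,-\rho)$ produces $C_{-\rho}$, and the two contributions are summed.

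The main obstacle will be the bookkeeping in this higher-order saddle-point expansion: one must simultaneously handle the vanishing of $G$ (which depends on the orbit-sum structure), the fact that $z_d$ has a different modulus than the other coordinates (so the quadratic form mixing $\theta_d$ with $\theta_1,\dots,\theta_{d-1}$ must be analyzed), and the correct branch in the $\sqrt{\cdot}$ appearing in $C_\rho$. I would organize the computation by writing $G = \sum_k (1-\oz_k)\, G_k$ (or an analogous factorization) so that the leading surviving contribution is manifestly a sum of $d-1$ terms, each computable by a one-dimensional saddle in $\theta_k$ combined with the standard Gaussian in the remaining variables. Appendix~\ref{appendixA} can then absorb the routine but lengthy algebraic identification with the stated constant. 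The error bound $O(n^{-1})$ follows from the standard smooth ACSV tail estimate once the leading and subleading contributions are in hand.
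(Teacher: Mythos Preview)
Your high-level strategy---smooth-point ACSV with the next-order term because the leading constant vanishes---matches the paper. But two concrete points in your setup would make the computation fail.

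First, the diagonal representation is not $H = 1 - t(z_1\cdots z_d)S(\bz)$ with $G$ an orbit sum over the reflections in $z_1,\dots,z_{d-1}$. Converting the positive-part extraction to a diagonal (Lemma~\ref{lem:diagsubs}) substitutes $z_j\mapsto\oz_j$, so the kernel factor becomes $1 - t z_1\cdots z_d\,\oS(\bz)$ with $\oS(\bz)=S(\bzht{d},\oz_d)$, and it introduces $(1-z_1)\cdots(1-z_d)$ in the denominator. For $k<d$ the orbit-sum factor $(1-\oz_k^2)$ then \emph{cancels} the $(1-z_k)$, leaving $(1+z_k)$ in the numerator---these do not vanish at $z_k=1$. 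The factor $1-z_d$ does not cancel (this is why the paper works with the form in Theorem~\ref{thm:diag2}); in the negative-drift case the critical $z_d$-coordinate is $\sqrt{B(\bone)/A(\bone)}=1/\rho<1$, so the contributing point sits on $\mV_1$ alone and $1-z_d$ is a harmless nonzero factor there. (Note $\rho>1$ under negative drift, not $\rho<1$.)

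Second, and this is the real gap: the numerator does \emph{not} vanish at $\bp_1$ because of $(z_k-1)$-type factors for $k<d$---those have been cancelled. The vanishing comes from the $z_d$-part of the orbit sum, namely the factor $B(\bzht{d}) - z_d^2 A(\bzht{d})$ (equivalently the factor $Y$ in Appendix~\ref{appendixA}), which is a first-order zero in the $\theta_d$ direction only. Hence your proposed decomposition $G=\sum_{k<d}(1-\oz_k)G_k$ and the claim that the leading surviving contribution is ``manifestly a sum of $d-1$ terms'' are both incorrect. In the paper's computation (Propositions~\ref{prop:NegAsm} and~\ref{prop:NegAsm2}) the $L_1$ term at the point with $p_j=1$ for $j<d$ reduces to the $\partial_d^2\tilde u$ contribution alone: the sum over $j<d$ in $C_\bp$ carries a prefactor $(1-p_j)=0$, so $C_\rho$ comes from the single $\theta_d$-direction piece $\frac{1}{A(\bone)p_d(1-p_d)}$, not from the $(d-1)$-term structure you describe.
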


Again, this result can be immediately applied to families of models. Note that $S(\bone,\rho) = Q(\bone) + 2 \sqrt{A(\bone)B(\bone)}$.

\begin{example}
Consider the step set where $A\left(\bzht{d}\right) = \prod_{j<d} (z_j + \oz_j)$, $Q\left(\bzht{d}\right)=0$, and $B\left(\bzht{d}\right)=1$ (note that this is the reflection in the $z_d$ axis of the step set of the previous example). Then $\rho = 2^\frac{d-1}{2}$ and
\[ C_{\rho} = \frac{2^{2d-3/2}}{\pi^{d/2}\left(2^{(d-1)/2}-1\right)^2} \qquad\qquad
C_{-\rho} = \frac{2^{2d-3/2}}{\pi^{d/2}\left(2^{(d-1)/2}+1\right)^2},
\]
so
\[ s_n = \left(2^{(d+1)/2}\right)^n \cdot n^{-d/2-1} \cdot \frac{2^{2d-3/2}}{\pi^{d/2}(2^{d-1}-1)^2} \cdot c_n \left(1 + O(n^{-1})\right),\]
where
\[ c_n = \begin{cases} 2^d+2 &: n \text{ is even} \\ 2^{(d+3)/2} &: n \text{ is odd} \end{cases} \]
\end{example}

Theorem~\ref{thm:NegAsm} is proved in Section~\ref{ss:negdrift}.  

\section{Lattice Path Generating Functions}
\label{sec:KernelMethod}

We now show how to derive several useful expressions for the generating functions of the lattice path models we consider. We closely follow the kernel method as outlined in Bousquet-M{\'e}lou and Mishna \cite{BoMi10}, and a straightforward generalization to higher dimensions discussed by Melczer and Mishna~\cite{ MelczerMishna2016}. This approach builds heavily on a probabilistic framework detailed by Fayolle, Iasnogorodski, and Malyshev~\cite{FaIaMa99}; see also Bousquet-M{\'e}lou and Petkov\v{s}ek~\cite{BoPe00} for a more general overview on the kernel method.

\subsection{A generating function expression via the kernel method}

To apply the kernel method we introduce the \emph{symmetry group} of the walk.

\begin{definition}
\label{def:mG}
For $1\leq j \leq d-1$ define the map $\sigma_j: \mathbb{C}^d \rightarrow \mathbb{C}^d$ by 
\[ \sigma_j\left(z_1,\dots,z_d \right) = (z_1,\dots,z_{j-1},\oz_j,z_{j+1},\dots, z_d), \]
and the map $\gamma:\mathbb{C}^d \rightarrow \mathbb{C}^d$ by 
\[ \gamma\left(z_1,\dots,z_d \right) = \left(z_1,\dots,z_{d-1}, \oz_d \frac{A\left(\bzht{d}\right)}{B\left(\bzht{d}\right)} \right). \]
We can view these maps as acting on Laurent polynomials $f \in \mathbb{C}[z_1,\oz_1,\dots,z_d,\oz_d]$ through
\[ \sigma \cdot f(\bz) := f(\sigma(z_1,\dots,z_d)) \]
and further view them as acting on elements $\sum_{n \geq 0} f_n(\bz)t^n \in \mathbb{C}[z_1,\oz_1,\dots,z_d,\oz_d][[t]]$ by
\[ \sigma \cdot \sum_{n \geq 0} f_n(\bz)t^n := \sum_{n \geq 0} \left(\sigma \cdot f_n(\bz)\right) t^n = \sum_{n \geq 0} f_n(\sigma(\bz))t^n.\]
Finally, we let $\mG$ be the group of birational transformations generated by $\sigma_1,\dots,\sigma_{d-1}$ and $\gamma$.
\end{definition}

\begin{remark}
Since $\mS$ is symmetric over all but one axis we have, for each $j=1,\dots,d-1$,
\[ \sigma_j\left(A\left(\bzht{d}\right)\right) = A\left(\bzht{d}\right) \qquad\qquad \sigma_j\left(B\left(\bzht{d}\right)\right) = B\left(\bzht{d}\right)\]
which, together with the fact that $\gamma$ fixes $S(\bz)$, implies that $S(\bz)$ is fixed by $\mG$.  Furthermore, these equalities show that the generators of $\mG$, which are involutions, commute, meaning $\mG$ is the finite group of order $2^d$ defined by
\[ \mG := \left\{ \sigma_1^{j_1} \cdots \sigma_{d-1}^{j_{d-1}} \gamma^{j_d} : j_1,\dots,j_d \in \{0,1\} \right\}. \]
The group $\mG$ is the direct sum of $d$ cyclic groups of order 2.
\end{remark}

Let $F(\bz,t)$ be the multivariate generating function 
\[ F(\bz,t) = \sum_{\substack{\bi \in \mathbb{N}^d \\ n \geq 0}} a_{\bi,n}\bz^i t^n, \]
where $a_{\bi,n}$ counts the number of weighted walks of length $n$ using the steps in $\mS$, beginning at the origin, ending at $\bi \in \mathbb{N}^d$, and never leaving the non-negative orthant in $\mathbb{Z}^d$.  Describing a walk of length $n$ ending at $\bi \in \mathbb{N}^d$ recursively as a walk of length $n-1$ followed by a single step, one can show (see Melczer and Mishna~\cite{ MelczerMishna2016}) that the generating function satisfies a functional equation of the form
\begin{equation} (1-tS(\bz))\bz^{\bo}F(\bz,t) = \bz^{\bo} + \sum_{k=1}^d L_k(\bzhat,t), \qquad L_k(\bzhat,t) \in \mathbb{Q}[\bzhat][[t]]. 
\label{eq:funform} \end{equation}
In particular, note that each $L_k(\bzhat,t)$ is independent of the variable $z_k$.

When manipulating the formal expressions that arise in our application of the kernel method, we may encounter rational functions in the variables $z_1,\dots,z_d$ which, in addition to not being analytic at the origin, are not Laurent polynomials in these variables.  Thus, we make use of the iterated Laurent series ring $\mR = \mathbb{Q}((z_1))\cdots((z_d))[[t]]$; unless otherwise stated all computations below are assumed to take place in the ring $\mR$, which contains both $\mathbb{Q}[z_1,\oz_1,\dots,z_d,\oz_d][[t]]$ and $\mathbb{Q}[[\bz,t]]$.  Note that every rational function in $\mathbb{Q}(\bz)$ has an expansion in $\mR$.  For further details on iterated Laurent series, including their uses in combinatorics and a classification of which formal series are iterated Laurent series, the reader is referred to the PhD thesis of Xin~\cite{Xin04}. We define the \emph{positive sub-series extraction} operator $[\bz^{\geq 0}] : \mR \rightarrow \mathbb{Q}[[\bz,t]]$ by
\[ [\bz^{\geq 0}] \sum_{n \geq 0} \left( \sum_{\bi \in \mathbb{Z}^d} a_{\bi,n}z^{\bi} \right)t^n := \sum_{n \geq 0} \left( \sum_{\bi \in \mathbb{N}^d} a_{\bi,n}z^{\bi} \right)t^n.\]
This setup leads to Theorem~\ref{thm:kernel}, typical of the kernel method (see Bousquet-M{\'e}lou and Mishna~\cite{BoMi10}, for instance, or Zeilberger~\cite{Zeilberger1983} and Gessel and Zeilberger~\cite{GeZe92} for similar expressions in a multivariate setting).

\begin{theorem} 
\label{thm:kernel}
If $\mS$ is symmetric over all but one axis, then the multivariate generating function $F(\bz,t)$ tracking endpoint and length satisfies
\begin{equation} 
F(\bz,t) = [\bz^{\geq 0}] \frac{\sum_{\sigma \in \mG} \sgn(\sigma)\sigma(z_1\dots z_d)}{(z_1 \cdots z_d)(1-tS(\bz))}, \label{eq:posext}
\end{equation}
where 
\[ \sgn\left(\sigma_1^{j_1} \cdots \sigma_{d-1}^{j_{d-1}} \gamma^{j_d}\right) = (-1)^{j_1 + \cdots + j_d}.\]
The generating function $F(\bz,t)$, and thus the specialized generating function $F(\bone,t)$ that counts the walks of a given length ending anywhere, are D-finite.
\end{theorem}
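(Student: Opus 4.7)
The plan is to apply the algebraic kernel method: apply every element of $\mG$ to the functional equation~(\ref{eq:funform}), take the signed orbit sum, and then extract non-negative powers of $\bz$ inside the iterated Laurent ring $\mR$. Since $S(\bz)$ is $\mG$-invariant (by the remark following Definition~\ref{def:mG}), applying any $\sigma \in \mG$ to~(\ref{eq:funform}) yields
\[ (1-tS(\bz))\,\sigma(\bz^{\bo})\,\sigma(F(\bz,t)) \;=\; \sigma(\bz^{\bo}) + \sum_{k=1}^d \sigma(L_k(\bzhat,t)). \]
Multiplying by $\sgn(\sigma)$ and summing over $\mG$, I claim the $L_k$-contributions cancel in pairs: for $k<d$, the involution $\sigma_k$ fixes $L_k$ (which does not involve $z_k$) while carrying sign $-1$, so the pairing $\sigma \leftrightarrow \sigma\sigma_k$ kills the $L_k$-sum; an identical argument with $\gamma$ handles $L_d$, whose expression is independent of $z_d$. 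This collapses the right-hand side to $\sum_{\sigma}\sgn(\sigma)\,\sigma(\bz^{\bo})$.

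Next I would divide by $\bz^{\bo}$ and apply $[\bz^{\geq 0}]$ to both sides. The identity term $\sigma = e$ contributes $F(\bz,t)$ itself since $F\in\mathbb{Q}[[\bz,t]]$, so it suffices to show that each non-identity $\sigma$ contributes zero. Writing $\sigma = \sigma_1^{j_1}\cdots\sigma_{d-1}^{j_{d-1}}\gamma^{j_d}$, pick any index $k$ with $j_k = 1$. If $k<d$, the factor $\sigma(\bz^{\bo})/\bz^{\bo}$ carries $z_k^{-2}$, and $\sigma(F)$ has only non-positive $z_k$-exponents (since $F$ has non-negative exponents and $\sigma_k$ inverts $z_k$), so the $z_k$-exponent of the product is at most $-2$. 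If $k=d$, the substitution $z_d \mapsto \oz_d A(\bzht{d})/B(\bzht{d})$ from $\gamma$ again forces $\sigma(F)$ to have only non-positive $z_d$-exponents (as $A/B$ is independent of $z_d$), and the $z_d^{-2}$ factor from $\sigma(\bz^{\bo})/\bz^{\bo}$ pushes the $z_d$-exponent of the product down to at most $-2$. In every case some variable occurs only with strictly negative exponents, so $[\bz^{\geq 0}]$ annihilates the term, giving the stated formula.

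The main obstacle I anticipate is rigorously justifying these expansions inside $\mR$ when $\gamma$ is in play: one must verify that the prescribed iterated-Laurent expansion of $1/B(\bzht{d})$ (and of the remaining denominators created when $\gamma$ is composed with other generators) is compatible with the $z_d$-exponent bookkeeping above, which is precisely why one passes from $\mathbb{Q}[z_1,\oz_1,\ldots,z_d,\oz_d][[t]]$ up to $\mR$. For the D-finiteness claim, I would rewrite the positive-part extraction $[\bz^{\geq 0}]$ of a rational function as a diagonal of an auxiliary multivariate rational function via a standard change of variables; the classical theorem of Lipshitz on D-finiteness of diagonals of rational power series then yields D-finiteness of $F(\bz,t)$, and closure of the multivariate D-finite class under specialization of variables gives D-finiteness of $F(\bone,t)$.
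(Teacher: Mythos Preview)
Your argument is essentially the paper's proof: apply each $\sigma\in\mG$ to the functional equation, take the signed orbit sum so the $L_k$ cancel via the pairing $\sigma\leftrightarrow\sigma\sigma_k$ (respectively $\sigma\leftrightarrow\sigma\gamma$), and then observe that every non-identity orbit term is killed by $[\bz^{\geq 0}]$; D-finiteness follows from Lipshitz. One small sharpening: your phrase ``pick any index $k$ with $j_k=1$'' is slightly dangerous, because when $j_d=1$ the factor $1/B(\bzht{d})$ introduced by $\gamma$ may, upon expansion in $\mR$, contribute arbitrary powers of $z_k$ for $k<d$, so the $z_k$-exponent bound need not hold. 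The paper (and, implicitly, your own ``main obstacle'' paragraph) resolves this by always choosing $k=d$ whenever $j_d=1$, using the variable ordering in $\mR$ to guarantee strictly negative $z_d$-exponents; only when $j_d=0$ does one fall back to some $k<d$, where no $1/B$ issue arises.
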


\textbf{Note:} The order of the iterated Laurent fields that define $\mR$ is important. If one works in an iterated Laurent field where $z_d$ is not the last variable before $t$, Equation~\eqref{eq:posext} may not hold.

\begin{proof}
We begin by examining the expression $\sigma(z_1\dots z_d)F(\sigma(\bz),t)$ for some fixed $\sigma = \sigma_1^{j_1} \cdots \sigma_{d-1}^{j_{d-1}} \gamma^{j_d} \in \mG$.  When $j_d = 1$, then every term in the expansion of $\sigma(z_1\dots z_d)F(\sigma(\bz),t)$ in the ring $\mR$ will have negative power of $z_d$ (due to the order of the variables used when defining $\mR$).  Otherwise, if $j_d = 0$ and there is some $k \in \{1,\dots,d-1\}$ such that $j_k =1$ then every term in the expansion of $\sigma(z_1\dots z_d)F(\sigma(\bz),t)$ in the ring $\mR$ will have negative power of $z_k$.
Thus, we see $[\bz^{\geq 0}]\sigma(z_1\dots z_d)F(\sigma(\bz),t) = 0$ for $\sigma \in \mG$ unless $\sigma$ is the identity element.  This implies
\begin{align*}
[\bz^{\geq 0}] \sum_{\sigma \in \mG} \sgn(\sigma)\sigma(z_1\dots z_d)F(\sigma(\bz),t) 
&= \sum_{\sigma \in \mG} \sgn(\sigma)\left([\bz^{\geq 0}]\sigma(z_1\dots z_d)F(\sigma(\bz),t)\right) \\
&= (z_1 \cdots z_d) F(\bz,t).
\end{align*}
By definition, for all $\sigma \in \mG$ and $\tau \in \{\sigma_1,\dots,\sigma_{d-1},\gamma\}$,
\[ \sgn(\tau\sigma) = -\sgn(\sigma). \]
As $S(\bz)$ is fixed by the elements of $\mG$, to prove Equation~\eqref{eq:posext} from Equation~\eqref{eq:funform} it is sufficient to show that for each $k=1,\dots,d$,
\[ \sum_{\sigma \in \mG} \sgn(\sigma) \sigma(z_1\dots z_d) \left( \sigma \cdot L_k(\bzhat,t) \right) = 0. \]
Fix $k$ and write $\mG$ as the disjoint union $\mG = \mG_0 \cup \mG_1$, where 
\begin{align*}
\mG_0 &= \left\{ \sigma_1^{j_1} \cdots \sigma_{d-1}^{j_{d-1}} \gamma^{j_d} : j_1,\dots,j_d \in \{0,1\}, j_k=0 \right\} \\
\mG_1 &= \left\{ \sigma_1^{j_1} \cdots \sigma_{d-1}^{j_{d-1}} \gamma^{j_d} : j_1,\dots,j_d \in \{0,1\}, j_k=1 \right\}.
\end{align*}
Then for all $g \in \mG_1$, $(\sigma_kg) \cdot L_k(\bzhat,t) = g \cdot L_k(\bzhat,t)$, and therefore
{\small
\begin{align*}
\sum_{\sigma \in \mG} \sgn(\sigma) \sigma(z_1\dots z_d)\left( \sigma \cdot L_k(\bzhat,t) \right) 
&= \sum_{\sigma \in \mG_0}\sgn(\sigma)  
\sigma(z_1\dots z_d)\left( \sigma \cdot L_k(\bzhat,t) \right) \\
&\quad + \sum_{\sigma \in \mG_1} \sgn(\sigma) \sigma(z_1\dots z_d)\left( \sigma \cdot L_k(\bzhat,t) \right) \\
&= \sum_{\sigma \in \mG_0} \left(\sgn(\sigma) - \sgn(\sigma) \right) \sigma(z_1\dots z_d)\left( \sigma \cdot L_k(\bzhat,t) \right) \\
&= 0,
\end{align*}
}
\noindent
as desired.  The results on D-finiteness follow from a classical result of Lipschitz~\cite{Li88} which states (in an equivalent form) that the class of D-finite functions is closed under positive sub-series extraction.
\end{proof}
The next result determines an explicit expression for the generating function under consideration.

\begin{lemma} \label{lem:orbitsum}
For the group $\mG$,
\[ \sum_{\sigma \in \mG} \sgn(\sigma)\sigma(z_1\dots z_d) = (z_1 - \oz_1) \cdots (z_{d-1} - \oz_{d-1}) \left(z_d - \oz_d \frac{A\left(\bzht{d}\right)}{B\left(\bzht{d}\right)}\right).\]
Consequently,
\[
F(\bz, t) = [\bz^{\geq 0}] R(\bz, t)
\]
where
\begin{equation}
\label{eq:posext2}
R(\bz, t) = \frac{(1-z_1^{-2}) \cdots (1-z_{d-1}^{-2}) \left(B\left(\bzht{d}\right)- z_d^{-2} A\left(\bzht{d}\right)\right)}{B\left(\bzht{d}\right)\left(1 - tS(\bz)\right)}.
\end{equation}
\end{lemma}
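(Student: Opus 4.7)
The plan is to split the lemma into its two assertions --- the orbit-sum identity, and the consequent rewriting of $F(\bz,t)$ as $[\bz^{\geq 0}] R(\bz,t)$ --- and to prove each by direct computation, exploiting the product structure $\mG \cong (\mathbb{Z}/2)^d$ already established in the remark after Definition~\ref{def:mG}.

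For the orbit-sum identity, I would first observe that the symmetry of $A(\bzht{d})$ and $B(\bzht{d})$ in the variables $z_1,\dots,z_{d-1}$ gives $\sigma_j(A)=A$ and $\sigma_j(B)=B$ for every $j<d$, while $\gamma$ fixes every $z_i$ with $i<d$. Combined, these facts imply that the action on the single monomial $z_1\cdots z_d$ decouples across the indices:
\[
\sigma_1^{j_1}\cdots\sigma_{d-1}^{j_{d-1}}\gamma^{j_d}(z_1\cdots z_d) \;=\; \prod_{i=1}^{d-1}\sigma_i^{j_i}(z_i) \cdot \gamma^{j_d}(z_d),
\]
where $\sigma_i^{j_i}(z_i)$ equals $z_i$ or $\oz_i$ according as $j_i=0$ or $1$, and $\gamma^{j_d}(z_d)$ equals $z_d$ or $\oz_d A(\bzht{d})/B(\bzht{d})$ according as $j_d=0$ or $1$. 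Since the sign $\sgn(\sigma)=(-1)^{j_1+\cdots+j_d}$ factors in the same way, the signed sum over $\mG$ collapses into a product of one-variable differences, yielding the stated formula.

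For the second assertion, I would substitute this orbit sum into the identity of Theorem~\ref{thm:kernel}, distribute the denominator factor $1/(z_1\cdots z_d)$ across the numerator to produce $1-z_i^{-2}$ in each of the first $d-1$ factors and $1 - z_d^{-2}A(\bzht{d})/B(\bzht{d})$ in the last, and then multiply both numerator and denominator by $B(\bzht{d})$ to clear the residual rational term. The resulting integrand is exactly $R(\bz,t)$, so applying $[\bz^{\geq 0}]$ to both sides produces the desired identity.

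Expected main obstacle: there is none of substance. The principal content is the product structure of $\mG$, which is already in hand; the rest is mechanical algebra. The one technicality worth flagging is that the multiplication by $B(\bzht{d})$ in the last step must be a legitimate manipulation inside $\mR$ before the operator $[\bz^{\geq 0}]$ is applied. This is fine: $B(\bzht{d})$ is a nonzero Laurent polynomial in $z_1,\dots,z_{d-1}$, hence a unit in the iterated Laurent coefficient field of $\mR$, so multiplying numerator and denominator by $B(\bzht{d})$ does not alter the element of $\mR$ being acted on.
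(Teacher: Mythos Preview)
Your proposal is correct and follows essentially the same approach as the paper: the paper's proof merely says the first identity ``follows directly from the definition of $\mG$ and the sign operator (formally it can be proved by induction)'' and that the second ``comes from combining [the first] with \eqref{eq:posext}''. Your argument is a more detailed version of exactly this, making explicit the decoupling of the action (which is the content the paper leaves implicit) and the routine algebra yielding $R(\bz,t)$; the technicality you flag about $B(\bzht{d})$ being a unit in $\mR$ is a valid and correctly resolved observation that the paper does not bother to mention.
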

\begin{proof}
The first statement follows directly from the definition of $\mG$ and the sign operator (formally it can be proved by induction). The second statement comes from combining Lemma~\ref{lem:orbitsum} with \eqref{eq:posext}.
\end{proof}

\subsection{A diagonal representation}
\label{sec:diag}

Next, we turn back to the sequence counting the total number of walks of a given length (regardless of endpoint).  The generating function of this sequence is simply $F(\bone,t)$, since specializing each $z_j$ variable to 1 sums over its possible values.  

We may translate the positive sub-series extraction given by Equation~\eqref{eq:posext} into an expression for $F(\bone,t)$ using the diagonal operator $\Delta : \mR \rightarrow \mathbb{Q}[[t]]$ defined by 
\[ \Delta \left(\sum_{n \geq 0} \left( \sum_{\bi \in \mathbb{Z}^d} a_{\bi,n}z^{\bi} \right)t^n\right) := \sum_{n \geq 0} a_{n,\dots,n}t^n. \]
Our asymptotic results will follow from an analysis of a diagonal expression for $F(\bone, t)$. Establishing this diagonal expression is more complicated than in Melczer and Mishna~\cite{ MelczerMishna2016}, because we must consider expressions whose coefficients in $t$ are not Laurent polynomials. In the completely symmetric case $A = B$ in \eqref{eq:posext2}, and cancellation leaves only $1-tS(\bz)$ in the denominator.

The following technical lemma is elementary, involving only algebraic manipulations (see also Melczer and Mishna~\cite[Proposition 2.6]{ MelczerMishna2016}).

\begin{lemma} \label{lem:diagsubs}
Let $P(\bz,t) \in \mathbb{Q}[z_1,\oz_1,\dots,z_d,\oz_d][[t]] \subset \mR$.  Then 
\begin{equation} \label{eq:postodiag}
\left([\bz^\geq]P(\bz,t)\right) \bigg|_{z_1=1,\dots,z_d=1} = \Delta \left(\frac{P\left(\oz_1,\dots,\oz_d,z_1\cdots z_d\cdot t\right)}{(1-z_1)\cdots(1-z_d)}\right),
\end{equation}
where the diagonal on the right hand side is taken as an expansion in $\mR$, as usual.
\end{lemma}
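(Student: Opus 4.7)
The plan is to reduce \eqref{eq:postodiag} to monomial inputs and then verify the identity by direct coefficient extraction. Since $P \in \mathbb{Q}[z_1,\oz_1,\dots,z_d,\oz_d][[t]]$, each coefficient $[t^n] P$ is a Laurent polynomial with only finitely many terms, so by $\mathbb{Q}[[t]]$-linearity of every operator appearing on both sides it suffices to verify the identity when $P(\bz,t) = \bz^{\bi} t^n$ for a single $\bi \in \mathbb{Z}^d$ and $n \geq 0$.

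For such a monomial the LHS is immediate: positive sub-series extraction kills $\bz^{\bi} t^n$ unless $\bi \in \mathbb{N}^d$, in which case specializing $\bz = \bone$ leaves $t^n$. For the RHS, the substitution turns the monomial into $t^n \bz^{n\bone - \bi}$ in the numerator; expanding each factor $1/(1-z_k)$ as $\sum_{j_k \geq 0} z_k^{j_k}$ (its unique expansion in $\mathbb{Q}((z_k))$, since $1-z_k$ is a unit there with constant term $1$), the product rewrites as
\[
t^n \sum_{\bj \in \mathbb{N}^d} \bz^{n\bone - \bi + \bj}.
\]
Applying $\Delta$ extracts the coefficient of $\bz^{n\bone}$ as the $t^n$ coefficient, which counts the $\bj \in \mathbb{N}^d$ satisfying $\bj = \bi$: precisely one such $\bj$ exists when $\bi \in \mathbb{N}^d$, and none otherwise. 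The two sides therefore agree on monomials, and the general identity follows by linearity.

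The only genuine subtlety, and what I expect to be the main point to call out in a careful write-up, is the role of the iterated Laurent series ring $\mR$: the whole argument depends on expanding each $1/(1-z_k)$ with nonnegative powers of $z_k$. An alternative expansion such as $-\oz_k \sum_{j \geq 0} \oz_k^j$ would destroy the correspondence between positive sub-series extraction on the left and the diagonal operator on the right, so some care is needed to confirm that the canonical geometric series expansion is indeed what the ordering of fields defining $\mR$ forces. Once this is settled no analytic or convergence issues arise, and the remainder of the proof is pure bookkeeping.
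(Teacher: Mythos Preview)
Your argument is correct and is exactly what the paper has in mind: the paper simply remarks that the identity ``follows from the definition of the diagonal after writing out the geometric series and expansion of $P$ on the right hand side,'' and your monomial-by-monomial verification makes this precise. Your observation about the role of $\mR$ in forcing the expansion $1/(1-z_k)=\sum_{j\ge 0}z_k^{j}$ is the only point worth flagging, and the paper handles it only by the parenthetical ``as an expansion in $\mR$, as usual.''
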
 

The proof follows from the definition of the diagonal after writing out the geometric series and expansion of $P$ on the right hand side.  

We would like to use Lemma~\ref{lem:diagsubs} directly, but $R$ in~\eqref{eq:posext2} does not lie in the correct ring and the substitutions indicated (replacing $z_i$ by $\oz_i$) are not formally justified. This problem can be circumvented through a tedious but elementary generating function argument, taking into account the precise structure of the rational function under consideration, yielding Proposition~\ref{prop:oldDiag}. Due to certain undesirable properties of the representation~\eqref{eq:oldDiag} we use a different diagonal expression for the generating function more suited to an asymptotic analysis, so we omit the proof.

\begin{proposition} \label{prop:oldDiag}
Let $\mS$ be a weighted step set satisfying the conditions above.  Then the generating function counting the number of walks of a given length in the lattice path model defined by $\mS$ satisfies 
\begin{equation} 
F(\bone, t) = 
\Delta \left(\frac{(1+z_1) \cdots (1+z_{d-1}) \left(B\left(\bzht{d}\right) - z_d^2A\left(\bzht{d}\right)\right)}{ (1-z_d) B\left(\bzht{d}\right) \left(1 - tz_1\cdots z_dS(z_1, \dots, z_{d-1}, \oz_d)\right)}\right).\label{eq:oldDiag}
\end{equation}
\end{proposition}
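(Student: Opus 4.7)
The plan is to apply Lemma~\ref{lem:diagsubs} to the positive sub-series representation $F(\bz, t) = [\bz^{\geq 0}] R(\bz, t)$ supplied by Lemma~\ref{lem:orbitsum}, then specialize $\bz = \bone$ and simplify using the symmetries of $A$, $B$, and $Q$ under the $\sigma_j$ for $j < d$. Were Lemma~\ref{lem:diagsubs} directly applicable to $R$, it would give
\[
F(\bone, t) \;=\; \Delta\!\left( \frac{R(\oz_1, \dots, \oz_d,\, z_1 \cdots z_d \cdot t)}{(1-z_1)\cdots(1-z_d)} \right).
\]
The remark after Definition~\ref{def:mG} yields $A(\oz_1,\dots,\oz_{d-1}) = A(\bzht{d})$ and $B(\oz_1,\dots,\oz_{d-1}) = B(\bzht{d})$, while the $z_d \to \oz_d$ part of the substitution swaps the $A$- and $B$-halves of $S$ to give $S(\oz_1,\dots,\oz_d) = S(z_1,\dots,z_{d-1},\oz_d)$. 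Combined with $(1-z_j^2)/(1-z_j) = 1 + z_j$ for $j < d$, these identities collapse the argument of $\Delta$ onto exactly the rational function of~\eqref{eq:oldDiag}, so essentially all the content of the proof lies in justifying the use of the lemma.

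The obstacle is that Lemma~\ref{lem:diagsubs} as stated requires its input to lie in $\mathbb{Q}[z_1, \oz_1, \dots, z_d, \oz_d][[t]]$, while the $t^n$ coefficient of $R$ is $(1-\oz_1^2)\cdots(1-\oz_{d-1}^2)(B - \oz_d^2 A)\, S(\bz)^n/B(\bzht{d})$, which is rational but not Laurent polynomial in $\bz$ because of the $1/B(\bzht{d})$ factor. My plan is to extend the lemma to inputs of the form $P_0/\beta(\bzht{d})$ with $P_0 \in \mathbb{Q}[z_1,\oz_1,\dots,z_d,\oz_d][[t]]$ and $\beta \in \mathbb{Q}[\bzht{d}]$, as follows: expand $1/\beta$ as an iterated Laurent series in $\mR$, truncate to a Laurent polynomial $\beta_N$ of total degree at most $N$, apply Lemma~\ref{lem:diagsubs} to the Laurent-polynomial-coefficient approximation $R_N := P_0 \cdot \beta_N$, and pass to the limit $N \to \infty$ in the $t$-adic topology. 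Each fixed $t^n$ coefficient on both sides stabilizes once $N$ is large enough relative to $n$, since the positive sub-series extraction at $\bz = \bone$ and the diagonal each see only finitely many monomials at a single order of $t$; this continuity is what justifies exchanging the limit with the operations $[\bz^{\geq 0}]|_{\bz=\bone}$ and $\Delta$.

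I expect the principal subtlety to be that the formal substitution $\bz \to \oz$ does not preserve iterated Laurent expansions in general, so one has to verify that the rational function obtained by substituting into $R$ re-expands in $\mR$ compatibly with the truncated approximations. The ordering convention for $\mR$ emphasized in the note after Theorem~\ref{thm:kernel} (placing $z_d$ last) is precisely what makes the $z_d$-direction sub-series extraction interact correctly with the $\gamma$-action encoded by the $1/B$ factor of $R$; an earlier placement of $z_d$ breaks the identity, as the note already warns. Once this extended substitution lemma is in hand, the desired identity~\eqref{eq:oldDiag} follows immediately from the algebraic simplifications above — a combination the authors reasonably describe as tedious but elementary, and which explains why the proof is omitted in favour of the cleaner diagonal representation used in the subsequent asymptotic analysis.
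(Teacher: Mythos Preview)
The paper omits this proof entirely, describing it only as ``a tedious but elementary generating function argument, taking into account the precise structure of the rational function under consideration,'' so there is no explicit argument to compare your plan against. Your overall strategy and the algebraic simplification are correct: substituting $\bz\mapsto\oz$ and using the $\sigma_j$-invariance of $A,B,Q$ together with $(1-z_j^2)/(1-z_j)=1+z_j$ does collapse the right-hand side of Lemma~\ref{lem:diagsubs} to exactly~\eqref{eq:oldDiag}.

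The gap is in your stabilization claim. You write that at each order $t^n$ the positive-part-at-$\bone$ and the diagonal ``see only finitely many monomials,'' and hence the truncations $R_N$ eventually agree with $R$. The finiteness is true for $R$ itself --- its positive part at order $t^n$ is the walk polynomial --- but that finiteness is a \emph{consequence} of the orbit-sum cancellation, and truncating $1/B$ breaks that cancellation: $[\bz^{\geq 0}][t^n]R_N$ acquires spurious high-degree terms whose support grows with $N$, so pointwise stabilization of coefficients does not by itself control the value at $\bz=\bone$. What actually makes $\bigl([\bz^{\geq 0}][t^n](R-R_N)\bigr)\big|_{\bz=\bone}$ vanish for large $N$ is not a counting argument but the factor $\prod_{j<d}(1-\oz_j^2)$ sitting in $P_0$: once the tail $1/B-\beta_N$ has been pushed to degrees exceeding the $z_j$-degree of $[t^n]P_0$, this factor forces the error, after positive extraction, to evaluate to zero at $\bone$. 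Concretely, for a single variable one has $\bigl[z^{\ge 0}\bigr]\bigl((1-\oz^2)\sum_{k\ge 0} a_k z^k\bigr)\big|_{z=1}=a_0+a_1$, which is $0$ as soon as the error series has minimum degree at least $2$. An analogous observation handles the diagonal side and the expansion-field mismatch you correctly flag. This is presumably the ``precise structure'' the paper refers to; your outline is on the right track, but the mechanism driving the limit is different from the one you cite.
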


The rational function in~\eqref{eq:oldDiag} presents a challenge for the integral manipulations necessary to compute asymptotics as one can only easily deform domains of integration where the integrand is analytic; the factor $B\left(\bzht{d}\right)$ present in the denominator can give strange surfaces of singularities.  Instead we use the following alternative expression, which is a power series in $t$ with Laurent polynomial coefficients in the other variables. 

\begin{theorem} \label{thm:diag2}
Let $\mS$ be a weighted step set satisfying the conditions above.  Then the generating function counting the number of walks of a given length in the lattice path model defined by $\mS$ satisfies 
\[ F(\bone, t) =  \Delta \left( \frac{G(\bz,t)}{H(\bz,t)} \right),\] 
where
\begin{equation}
\label{eq:F2}
\begin{split}
G(\bz,t) &= (1+z_1) \cdots (1+z_{d-1}) \left(1 - t z_1\cdots z_d\left(Q\left(\bzht{d}\right) + 2z_d A\left(\bzht{d}\right)\right)\right)  \\
H(\bz,t) &= (1-z_d)  {\Big(}1 - tz_1\cdots z_d \oS(\bz){\Big)} {\Big(}1 - tz_1\cdots z_d \left(Q\left(\bzht{d}\right) + z_dA\left(\bzht{d}\right)\right){\Big)},
\end{split}
\end{equation}
and 
\[ \oS(\bz) = S(\bzht{d},\oz_d) = \oz_dB\left(\bzht{d}\right) + Q\left(\bzht{d}\right) + z_dA\left(\bzht{d}\right).\]
\end{theorem}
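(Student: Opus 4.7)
The plan is to deduce Theorem~\ref{thm:diag2} from Proposition~\ref{prop:oldDiag} by showing that the two diagonal integrands differ by a rational function whose diagonal vanishes. Write $u = tz_1 \cdots z_d$, $D_1 = 1 - u\oS(\bz)$, and $D_2 = 1 - u(Q(\bzht{d}) + z_d A(\bzht{d}))$. The first step is to verify the algebraic identity
\[
(B - z_d^2 A)\, D_2 \;=\; B\bigl(1 - u(Q + 2 z_d A)\bigr) \;-\; z_d^2 A\, D_1,
\]
where the arguments of $A, B, Q$ are all $\bzht{d}$. This follows by direct expansion using $u(Q + 2z_dA) - u(Q + z_dA) = u z_d A$ together with $z_d\cdot u\oS = u(B + z_d Q + z_d^2 A)$, the latter coming from $\oS = \oz_d B + Q + z_d A$. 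Dividing both sides by $BD_1D_2$ and multiplying through by $(1+z_1)\cdots(1+z_{d-1})/(1-z_d)$ rewrites the integrand in~\eqref{eq:oldDiag} as $G(\bz,t)/H(\bz,t)$ minus the error term
\[
E(\bz,t) \;:=\; \frac{z_d^2\, A(\bzht{d})\,(1+z_1)\cdots(1+z_{d-1})}{(1-z_d)\, B(\bzht{d})\, D_2}.
\]

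The remaining task is to show $\Delta(E) = 0$. I would expand $E$ as an element of $\mR$ and track the powers of $z_d$ that appear. Since $A$, $B$, and $Q$ are independent of $z_d$, the entire $z_d$-dependence of $E$ is captured by the explicit factor $z_d^2$ together with $\frac{1}{1-z_d}$ and $\frac{1}{D_2}$. Expanding
\[
\frac{1}{D_2} = \sum_{k \geq 0} t^k (z_1 \cdots z_{d-1})^k z_d^k (Q + z_d A)^k \qquad\text{and}\qquad \frac{1}{1 - z_d} = \sum_{j \geq 0} z_d^j,
\]
one sees that the coefficient of $t^k$ in $1/((1-z_d)D_2)$ contains only monomials with $z_d$-exponent at least $k$. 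Multiplying by the prefactor $z_d^2 A(\bzht{d})(1+z_1)\cdots(1+z_{d-1})/B(\bzht{d})$, whose only $z_d$-dependence is the explicit $z_d^2$, therefore forces every monomial $t^k z_1^{i_1}\cdots z_d^{i_d}$ of $E$ to satisfy $i_d \geq k + 2$. In particular, $[t^k z_1^k \cdots z_d^k] E = 0$ for every $k \geq 0$, so $\Delta(E) = 0$ and Theorem~\ref{thm:diag2} follows from Proposition~\ref{prop:oldDiag}.

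The main hurdle is the initial algebraic identity: while routine, it requires careful bookkeeping of the definitions of $\oS$, $D_1$, and $D_2$, and relies on the slightly opaque cancellation $z_d^2 A \oz_d B = z_d A B$. Once that identity is in hand the diagonal-vanishing step is conceptual rather than computational: because $A, B, Q$ depend only on $\bzht{d}$, the $z_d$-powers in the expansion of $E$ are shifted uniformly by the explicit $z_d^2$ in the numerator, pushing every term strictly above the main diagonal and guaranteeing cancellation.
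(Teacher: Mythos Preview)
Your argument is correct: the algebraic identity checks out, and the diagonal of $E$ indeed vanishes because every monomial has $z_d$-exponent strictly exceeding its $t$-exponent. However, your proof relies on Proposition~\ref{prop:oldDiag}, which the paper states but deliberately does \emph{not} prove. The paper instead gives a self-contained proof of Theorem~\ref{thm:diag2} directly from the positive-series expression~\eqref{eq:posext2} and Lemma~\ref{lem:diagsubs}: working \emph{before} the diagonal substitution, it subtracts from $R(\bz,t)$ the piece
\[
(1-\oz_1^2)\cdots(1-\oz_{d-1}^2)\left(-\oz_d^2\,\tfrac{A}{B}\right)\sum_{n\ge0} t^n(\oz_dA+Q)^n,
\]
which contains only negative powers of $z_d$ and hence leaves $[\bz^{\ge0}]$ unchanged; the resulting rational function has Laurent-polynomial $t$-coefficients, so Lemma~\ref{lem:diagsubs} applies directly. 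Under the substitution of Lemma~\ref{lem:diagsubs} that subtracted piece becomes exactly your $E$, so the two arguments are the same manipulation carried out on opposite sides of the diagonal substitution. The paper's route has the advantage of not appealing to the unproven Proposition~\ref{prop:oldDiag}; your route is a clean alternative once that proposition is taken for granted.
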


\begin{proof}
Expanding the expression in \eqref{eq:posext2} we obtain
\[
(1-\oz_1^2)\cdots (1 - \oz_{d-1}^2) \cdot \left( 1 - \oz_d^2 \frac{A\left(\bzht{d}\right)}{B\left(\bzht{d}\right)} \right) 
\cdot \sum_{n\geq 0} t^n \left(\oz_d A\left(\bzht{d}\right) + Q\left(\bzht{d}\right) + z_d B\left(\bzht{d}\right) \right)^n.
\]
As the sub-expression
\[
(1-\oz_1^2)\cdots (1 - \oz_{d-1}^2) \cdot \left( - \oz_d^2 \frac{A\left(\bzht{d}\right)}{B\left(\bzht{d}\right)} \right) \cdot 
\sum_{n\geq 0} t^n \left(\oz_d A\left(\bzht{d}\right) + Q\left(\bzht{d}\right)  \right)^n 
\]
contains no positive powers of $z_d$, we can subtract it from $R(\bz)$ and extract the positive part of
{\small 
\[
\frac{(1-\oz_1^2) \cdots (1-\oz_{d-1}^2) \left( 1- \oz_d^2 A\left(\bzht{d}\right)/B\left(\bzht{d}\right)\right)}{1 - tS(\bz)}
+ 
\frac{(1-\oz_1^2) \cdots (1-\oz_{d-1}^2) \left(\oz_d^2 A\left(\bzht{d}\right)/B\left(\bzht{d}\right)\right)}{1 - t\left(\oz_d A\left(\bzht{d}\right) + Q\left(\bzht{d}\right) \right)}.
\]
}
This final rational function simplifies to
\[
\frac{(1-\oz_1^2) \cdots (1-\oz_{d-1}^2)\left(1 - t\left(2\oz_d A\left(\bzht{d}\right) + Q\left(\bzht{d}\right)\right) \right)}{\left( 1 - t \left(\oz_d A\left(\bzht{d}\right) + Q\left(\bzht{d}\right) + z_d B\left(\bzht{d}\right) \right) \right)\left( 1 - t \left(\oz_d A\left(\bzht{d}\right) + Q\left(\bzht{d}\right) \right) \right)},
\]
and we can now apply Lemma~\ref{lem:diagsubs}. 
\end{proof}

Note that the power series expansion of $1/H(\bz,t)$ has all non-negative coefficients, which will allow us to simplify necessary characterizations of the singularities of $G(\bz,t)/H(\bz,t)$ below. In the special case where $S$ is symmetric over all axes, we obtain an expression different from that in \cite{MelczerMishna2016}; by forcing positivity on our series coefficients we have lost some symmetry and less cancellation occurs. For example, the generating function of the model with all possible steps in 2 dimensions is the diagonal of
\[
\frac{(1+x)(1+y)}{1-t(1+x+y+x^2+y^2+x^2y+xy^2+x^2y^2)}
\]
using the expression in Proposition~\ref{prop:oldDiag}, which coincides with that in \cite{MelczerMishna2016}, but the diagonal of 
{\small
\[
\frac{(1+x)(1-2t(y+y^2+x^2y+xy^2+x^2y^2))}{(1-y)(1-t(1+x+y+x^2+y^2+x^2y+xy^2+x^2y^2))(1-t(y+y^2+x^2y+xy^2+x^2y^2))}
\]
}
using the expression in Theorem~\ref{thm:diag2}.

\subsection{Models whose step sets have fewer symmetries}
\label{ss:why_not}

We end this section with a justification of why we only consider models missing one symmetry (instead of two, three, etc.).  Indeed, as the following theorem shows, in any dimension $d \geq 2$ there exists a model that is missing two symmetries and admits a generating function that is not D-finite.  As the diagonal of a multivariate rational function must be D-finite~\cite{Ch88,Li88}, this shows that it is impossible to determine the asymptotics of all models missing two symmetries uniformly through multivariate rational diagonals and analytic combinatorics in several variables.

\begin{theorem} There exists a sequence of step sets $\mS_2,\mS_3,\dots$ with $\mS_d$ defining a step set of dimension $d$ that is symmetric over all but two axes, such that the generating function of each model is non D-finite.
\end{theorem}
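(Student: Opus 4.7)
The plan is to reduce to the two-dimensional case by a direct product construction, then rule out D-finiteness in higher dimensions via an asymptotic-exponent argument.

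Fix a two-dimensional step set $\mT \subset \{-1, 0, 1\}^2 \setminus \{\bzer\}$ that has both a forward and a backward step in each coordinate, is not symmetric over either axis, and whose quarter-plane generating function is not D-finite. Many such $\mT$ exist in the Bousquet-M\'elou--Mishna classification~\cite{BoMi10}; non-D-finiteness of the infinite-group members was established by Mishna--Rechnitzer~\cite{MiRe09} and others. By the framework of Denisov--Wachtel~\cite{DenisovWachtel2015} combined with Bostan--Raschel--Salvy~\cite{BoRaSa14}, the associated walk count satisfies $w_n^{(\mT)} \sim C_0 \rho^n n^{\alpha_\mT}$ with critical exponent $\alpha_\mT = -1 - \pi/\arccos(-c_\mT/2)$ for an algebraic number $c_\mT$ determined by the step polynomial. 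A Lindemann--Weierstrass dichotomy forces $\alpha_\mT$ to be either rational (the finite-group, D-finite case) or transcendental; we pick $\mT$ in the latter class.

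For $d \geq 3$, define
\[
\mS_d \;:=\; \bigl\{(t_1, t_2, \epsilon_3, \ldots, \epsilon_d) \,:\, (t_1, t_2) \in \mT,\ \epsilon_j \in \{-1, +1\}\bigr\}.
\]
The involution $z_j \mapsto z_j^{-1}$ with $j \geq 3$ maps $\mS_d$ to itself by flipping $\epsilon_j$, so $\mS_d$ is symmetric over axes $3, \ldots, d$, while asymmetry over axes $1$ and $2$ is inherited from $\mT$; forward and backward steps exist in every coordinate by construction. A projection--independence argument then gives
\[
w_n^{(\mS_d)} \;=\; w_n^{(\mT)} \cdot b_n^{\,d-2},
\]
where $b_n = \binom{n}{\lfloor n/2 \rfloor}$ counts 1D $\pm 1$-walks of length $n$ in $\mathbb{N}$: the tuple $(\epsilon_3^{(i)}, \ldots, \epsilon_d^{(i)})$ at each step is chosen independently of the $\mT$-part and independently across higher coordinates, and each higher coordinate must describe a non-negative walk in $\mathbb{N}$. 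Using Stirling's $b_n \sim 2^n\sqrt{2/(\pi n)}$, this transfers to
\[
w_n^{(\mS_d)} \sim C \cdot (2^{d-2}\rho)^n \cdot n^{\alpha_\mT - (d-2)/2}.
\]
Because $\alpha_\mT$ is transcendental and $(d-2)/2$ is rational, the critical exponent $\alpha_\mT - (d-2)/2$ remains transcendental. By the Birkhoff--Trjitzinsky theory, a P-recursive sequence has dominant critical exponent algebraic over $\mathbb{Q}$; thus $w_n^{(\mS_d)}$ is not P-recursive and $W_{\mS_d}(t)$ is not D-finite.

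The main obstacle is certifying transcendence of $\alpha_\mT$ for an explicit $\mT$ meeting all of the conditions, which amounts to an arithmetic check that $c_\mT$ is not of the form $-2\cos(p\pi/q)$ for rational $p/q$; this holds for a generic BMM step set and can be verified directly for specific examples. A secondary technicality is the careful application of Birkhoff--Trjitzinsky to potentially periodic P-recursive sequences, a consideration parallel to the treatment of periodicity in the negative-drift case of Theorem~\ref{thm:NegAsm}.
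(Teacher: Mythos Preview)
Your construction---taking a Cartesian product of a two-dimensional non-D-finite model with $\{\pm1\}^{d-2}$ and factoring the walk count---is exactly the paper's approach. The paper uses the explicit model $\mS_2 = \{(-1,-1),(0,-1),(0,1),(1,0),(-1,0)\}$ and the same Dyck-prefix factor $\binom{n}{\lceil n/2\rceil}$, so structurally there is nothing new here.

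There are, however, two genuine slips in your write-up that the paper handles and you do not. First, the exponent formula $-1-\pi/\arccos(\cdot)$ you quote from Denisov--Wachtel and Bostan--Raschel--Salvy is for \emph{excursions} (walks returning to the origin), not for walks ending anywhere in the quadrant, which is what you need for the product identity $w_n^{(\mS_d)} = w_n^{(\mT)}\cdot b_n^{d-2}$ with $b_n$ counting Dyck prefixes. The paper closes this gap by choosing $\mS_2$ with drift negative in both coordinates and invoking Duraj~\cite{Du14}, which transfers the excursion exponent to the total count in that regime. You need an analogous step; simply citing Denisov--Wachtel does not give it.

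Second, your irrationality argument is tangled. The check you propose---that $c_\mT$ is not of the form $-2\cos(p\pi/q)$---establishes only that $\alpha_\mT$ is \emph{irrational}, not transcendental; Lindemann--Weierstrass does not apply in the way you suggest. Fortunately, you only need irrationality: the relevant structural fact (Theorem~3 of~\cite{BoRaSa14}, which the paper cites) is that a D-finite generating function whose coefficients satisfy $c_n\sim K\rho^n n^\alpha$ forces $\alpha\in\mathbb{Q}$, not merely $\alpha$ algebraic. So drop the transcendence claim and the Birkhoff--Trjitzinsky appeal, state the rationality constraint directly, and the argument goes through once the first gap is fixed.
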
 

\begin{proof}
If a counting sequence $(c_n)_{n\geq0}$ has asymptotics of the form $c_n \sim K \cdot \rho^n \cdot n^\alpha$ for constants $K,\rho,\alpha \in \mathbb{R}$ and its generating function is $D$-finite, then $\rho$ is algebraic and $\alpha$ is rational (see Theorem 3 of Bostan, Raschel, and Salvy~\cite{ BoRaSa14}). 

When $d=2$, consider the set of steps 
\[ \mS_2 = \{(-1,-1),(0,-1),(0,1),(1,0),(-1,0)\}.\]  
Bostan, Raschel, and Salvy~\cite{ BoRaSa14} show that $(e_n)_{n \geq 0}$, the number of walks on these steps staying in the first quadrant that begin and end at the origin, has dominant asymptotics
\[ e_n \sim K_e \cdot \rho_e^n \cdot n^{\alpha_e} \]
where $\alpha$ is an irrational number (approximately 2.757466) equal to $-1-\pi/\arccos(-c)$, with $c$ an algebraic number satisfying $c^3-c^2+(3/4)c-(1/8)=0$.  Work of Duraj~\cite{Du14} implies in our context\footnote{That article takes a probabilistic view of exit times for random walks to leave certain cones, and applies to a wide range of models; see its Example 7 for the case of two dimensional random walks in a quadrant.} that---since $\mS_2$ has negative vector sum in both coordinates---the sequence counting the total number of steps has dominant asymptotics 
\[ s^{(2)}_n \sim K_2 \cdot \rho_2^n \cdot n^{\alpha_2}, \]
where $\alpha_2 = \alpha_e$ and is thus irrational.

For $d \geq 3$ let $\mS_d = \mS_2 \times \{\pm1\}^{d-2}$.  Every walk of length $n$ on the steps $\mS_d$ is constructed uniquely from a walk of length $n$ on the steps $\mS_2$ in the non-negative quadrant and $d-2$ independent walks of length $n$ on the steps $\{-1,1\}$ restricted to the non-negative integers (this is a simple version of the Hadamard decomposition of walks studied in Bostan et al.~\cite{BostanBousquet-MelouKauersMelczer2016}).  Thus, the number of walks of length $n$ taking steps in $\mS_d$ restricted to the $d$-dimensional non-negative orthant is 
\[ s^{(d)}_n = s^{(2)}_n \cdot c_n^{d-2}, \]
where $c_n$ is the number of Dyck paths that do not have to end at 0 (sometimes called Dyck prefixes).  It is a classical result of enumerative combinatorics that $c_n = \binom{n}{\lceil n/2 \rceil}$ with dominant asymptotics of the form
\[ c_n \sim \sqrt{2/\pi}\cdot2^n\cdot n^{-1/2}, \]
which implies 
\[ s^{(d)}_n \sim K_d \cdot (\rho_d)^n \cdot n^{\alpha_d}, \]
with $\alpha_d = \alpha_2 - d/2+1 \not\in\mathbb{Q}$.
\end{proof}
It would be of great interest to find `simple' diagonal expressions involving more general multivariate meromorphic functions for walk models with non-D-finite generating functions. Such multivariate functions could not be D-finite, in the sense that the vector space of all partial derivatives over $\mathbb{Q}(\bz)$ would need to be infinite dimensional.

Furthermore, although not all models missing two symmetries can be handled directly by our methods, there are some models missing two (or more) symmetries whose generating functions can be written as rational diagonals. For a specific model, one can attempt to follow the algebraic kernel method for higher dimensional walks; see, for instance, Bostan et al.~\cite{BostanBousquet-MelouMelczer2018} for a general framework. Characterizing all such models is a more difficult task. Our best guess is that this property is related to being a Hadamard decomposition, in the sense of Bostan et al.~\cite{BostanBousquet-MelouKauersMelczer2016},  of some (hopefully nice) characterizable family of `atomic' D-finite models. Although in general one cannot simply determine asymptotics of a model which admits a Hadamard decomposition by multiplying the asymptotics of lower dimensional sequences, many properties such as D-finiteness are inherited via Hadamard decomposition; see Bostan et al.~\cite[Section 5]{BostanBousquet-MelouKauersMelczer2016} for details. Given a model whose generating function can be written as a rational diagonal, Courtiel et al.~\cite{CourtielMelczerMishnaRaschel2017} develop a method to determine weightings of the step set so that the weighted generating function can be represented as a parametrized rational diagonal with the weights as parameters.

We now move on to the analysis of the expressions obtained using the methods of analytic combinatorics in several variables. We use the methods developed by Pemantle and Wilson~\cite{PeWi13} for asymptotics controlled by points where the zero set of $H(\bz,t)$ is locally a manifold or a union of finitely many transversely intersecting manifolds.

\section{Contributing Singularities}
\label{sec:contrib}

Suppose $Q(\bz,t)$ is a rational function analytic at the origin. As in the univariate case, a multivariate singularity analysis starts from the Cauchy integral formula, which implies
\begin{equation}
b_n := [(z_1\cdots z_d\, t)^n]Q(\bz,t) = \frac{1}{(2\pi i)^{d+1}}\int_{\mC} Q(\bz) \cdot \frac{d\bz\,dt}{(z_1\cdots z_d \, t)^{n+1}}
\label{eq:mCIF}
\end{equation}
for any $n \in \mathbb{N}$ and $\mC$ a product of circles sufficiently close to the origin.  If $\mD$ is the domain of convergence of the power series of $Q(\bz,t)$ at the origin, and $\mV$ is the set of singularities of $Q(\bz,t)$, then any singularity on the boundary $\partial \mD$ of $\mD$ is called \emph{minimal}.  When $P(\bz,t)$ is a polynomial we say that $(\bw,s)$ is a \emph{minimal zero} of $P$ if $P(\bw,s) = 0$ and $P(\by,r) \neq 0$ whenever
\[ |w_j| \leq |y_j|, \quad j=1,\dots,d, \qquad |r| \leq |s| \]
and one of the inequalities is strict.  Note that a minimal point of $Q$ is a minimal zero of its denominator, and vice-versa.

As $Q(\bz,t)$ is rational, $b_n$ grows at most exponentially and standard integral bounds imply
\begin{equation} \limsup_{n \rightarrow\infty} |b_n|^{1/n} \leq |w_1 \cdots w_d \, s|^{-1} \label{eq:exbound} \end{equation}
for any minimal point $(\bw,s) \in \partial \mD \cap \mV$.  In the simplest cases, one hopes to identify a finite set of minimal points achieving the optimal bound in Equation~\eqref{eq:exbound}.  When such a set exists, and the local geometry of the algebraic set $\mV$ is sufficiently nice, asymptotics can then be determined.

We now specialize our arguments to the rational function $Q(\bz,t) = G(\bz,t)/H(\bz,t)$ defined by Theorem~\ref{thm:diag2}; note that $G$ and $H$ are co-prime, so the singularities of $Q$ are the zeros $\mV = \mV(H)$ of the polynomial $H$.  Because of the nice form of $H$, we are able to characterize its minimal zeros achieving the best bound in Equation~\eqref{eq:exbound}, which is typically the hardest step of any asymptotic analysis. We make use of the following result.

\begin{lemma}
\label{lem:minimum}
Suppose $\mathcal{P} \subset \mathbb{Z}^d$ is a finite set not contained in a hyperplane of $\mathbb{R}^d$, and $a_{\bi}>0$ are positive constants for each $\bi \in \mathcal{P}$.  Then every critical point of
\[ P(\bz) = \sum_{\bi \in \mathcal{P}}a_{\bi}\bz^{\bi} \]
on $\left(\mathbb{R}_{>0}\right)^d$ is a global minimum and $P$ admits at most one critical point on this domain.  Furthermore, such a global minimum exists if and only if $\mathcal{P}$ is not contained in a halfspace containing the origin. 
\end{lemma}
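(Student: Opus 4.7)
My plan is to reduce the problem to a standard convexity statement by the logarithmic change of variables $z_j = e^{x_j}$. Under this substitution the strictly positive orthant $(\mathbb{R}_{>0})^d$ is diffeomorphic to $\mathbb{R}^d$, and $P$ becomes
\[ \widetilde{P}(\bx) := \sum_{\bi \in \mathcal{P}} a_{\bi} \, e^{\langle \bi, \bx \rangle}, \]
a positive function whose critical points correspond bijectively to those of $P$ (since the Jacobian of the change of variables is nonsingular and $P>0$). It therefore suffices to show that $\widetilde{P}$ has at most one critical point on $\mathbb{R}^d$ and that any such point is a global minimum.

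The key step is strict convexity of $f(\bx) := \log \widetilde{P}(\bx)$. This follows from the well-known fact that the log-sum-exp of affine functions is convex, with the Hessian computable by direct differentiation as a weighted covariance of the vectors $\{\bi\}_{\bi \in \mathcal{P}}$ with weights $p_{\bi}(\bx) = a_{\bi} e^{\langle\bi,\bx\rangle}/\widetilde{P}(\bx)$. The Hessian is positive definite precisely when the exponent vectors $\bi \in \mathcal{P}$ are not contained in a common affine hyperplane, which is exactly our hypothesis. Since $\widetilde{P} = e^{f}$, strict convexity of $f$ implies $\widetilde{P}$ has at most one critical point and any critical point is the unique global minimum.

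For the existence statement, I would study the asymptotic behavior of $\widetilde{P}$ along rays. Along $\bx = t\bv$ with $\bv \neq \bzer$ and $t\to+\infty$, we have
\[ f(t\bv) = \log \sum_{\bi \in \mathcal{P}} a_{\bi}\, e^{t \langle \bi, \bv\rangle} = t \cdot \max_{\bi \in \mathcal{P}} \langle \bi, \bv\rangle + O(1). \]
Thus $f$ (equivalently $\widetilde{P}$) is coercive on $\mathbb{R}^d$ if and only if $\max_{\bi \in \mathcal{P}} \langle \bi, \bv\rangle > 0$ for every nonzero $\bv$, which is exactly the statement that $\mathcal{P}$ is not contained in any closed halfspace through the origin. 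By strict convexity, coercivity is equivalent to the existence of a (necessarily unique) global minimum. Conversely, if $\mathcal{P}$ does lie in such a halfspace $\{\by : \langle \by, \bv\rangle \leq 0\}$, then $\widetilde{P}(-t\bv)$ is bounded as $t\to+\infty$ and in fact the gradient can be shown not to vanish (since strict convexity forces $f(t\bv)$ to be strictly decreasing in $t$ in the reverse direction), so no critical point exists.

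The main obstacle I anticipate is simply bookkeeping in the Hessian computation to extract the precise hyperplane condition: one must verify that the weighted covariance matrix of the exponent vectors is positive definite iff the $\bi$ span an affine subspace of dimension $d$, and that this matches the stated hypothesis of $\mathcal{P}$ not lying in a hyperplane of $\mathbb{R}^d$ (interpreted affinely). Everything else is a clean application of strict convexity plus the coercivity characterization via behavior at infinity.
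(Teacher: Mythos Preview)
Your approach is correct and is essentially the same as the paper's: the paper's proof simply invokes strict convexity of the Laplace transform $P(e^{x_1},\dots,e^{x_d})$ and cites Garbit and Raschel for details, whereas you spell out the argument via the covariance-Hessian of $\log\widetilde{P}$ and the coercivity analysis along rays. The only minor wrinkle is that the paper phrases things in terms of strict convexity of $\widetilde{P}$ itself rather than $\log\widetilde{P}$; under the stated affine-hyperplane hypothesis both are strictly convex (since ``not in an affine hyperplane'' implies the exponent vectors also span $\mathbb{R}^d$ linearly), so either route works.
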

\begin{proof}
This result follows from the strict convexity of the \emph{Laplace transform} $P\left(e^{x_1},\dots,e^{x_d}\right)$; for details see Garbit and Raschel~\cite[Lemma 7]{GarbitRaschel2016}. 
\end{proof}

In order to reason about minimal zeros of $H(\bz,t)$, we define the factors 
\begin{align*}
H_1 & := 1 - tz_1\cdots z_d\oS(\bz) = 1 - tz_1\cdots z_{d-1} \left( z_d^2A(\bzht{d})+ z_d Q(\bzht{d}) + B(\bzht{d})\right) \\
H_2 & := 1 - tz_1\cdots z_d \left(Q(\bzht{d}) + z_dA(\bzht{d}) \right) \\
H_3 & := 1 - z_d,
\end{align*}
and set $\mV_j = \mV(H_j)$. 

Under our assumptions on $\mS$ the conditions of Lemma~\ref{lem:minimum} are satisfied by $\oS(\bz)$, giving the following.

\begin{proposition}
\label{prop:minpts}
The unique minimal zero of $H(\bz,t)$ with positive coordinates that minimizes $|z_1 \cdots z_d \, t|^{-1}$ is
\begin{align*}
\bp_1&:=\left(1, 1, \dots, 1, \sqrt{\frac{B(\bone)}{A(\bone)}}, \frac{\sqrt{A(\bone)/B(\bone)}}{2\sqrt{A(\bone)B(\bone)}+Q(\bone)}\right) \qquad \text{if the drift is negative} \\[+2mm]
\bp_2&:= \left(1, 1, \dots, 1, \frac{1}{S(\bone)} \right) \qquad \text{otherwise}.
\end{align*}
\end{proposition}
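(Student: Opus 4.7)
The plan is to factor $H=H_1H_2H_3$ and analyze each factor separately, locating the extremum via Lemma~\ref{lem:minimum} applied to the Laurent polynomial $\oS(\bz)$. The first observation removes $H_2$: on the positive orthant
\[
\oS(\bz)-\bigl(Q(\bzht{d})+z_dA(\bzht{d})\bigr)=\oz_dB(\bzht{d})>0,
\]
so for every positive zero of $H_2$ the value of $t$ determined by $H_1=0$ at the same $\bz$ is strictly smaller, giving a strictly smaller zero of $H$ on $\mV_1$. Thus the candidate minimal zeros with positive coordinates lie on $\mV_1\cup\mV_3$. On $\mV_1$, solving $H_1=0$ yields $z_1\cdots z_d\,t=1/\oS(\bz)$, so maximizing $|z_1\cdots z_d\,t|$ amounts to minimizing $\oS$ over $\mathbb{R}_{>0}^d$. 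The monomial support of $\oS$ is the $z_d$-reflection of $\mS$, so it still contains forward and backward steps in each coordinate (hence is not contained in any halfspace through the origin) and spans $\mathbb{R}^d$; the coefficients are positive weights. Lemma~\ref{lem:minimum} therefore gives a unique positive critical point, which is a global minimum, and the symmetry of $\oS$ under $z_k\mapsto\oz_k$ for $k<d$ pins down $z_k=1$ at that point. Solving $\partial\oS/\partial z_d=A(\bone)-z_d^{-2}B(\bone)=0$ gives $z_d=\sqrt{B(\bone)/A(\bone)}$ with value $\oS=2\sqrt{A(\bone)B(\bone)}+Q(\bone)$.

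The sign of the drift then determines whether this critical $z_d$-coordinate is compatible with the constraint $z_d\le 1$ imposed by $H_3=1-z_d$. Writing
\[
tz_1\cdots z_d\oS(\bz)=tz_1\cdots z_{d-1}\bigl(B(\bzht{d})+z_dQ(\bzht{d})+z_d^2A(\bzht{d})\bigr)
\]
and using that each exponent of $z_k$ in $A,Q,B$ (for $k<d$) lies in $\{-1,0,1\}$, the prefactor $z_1\cdots z_{d-1}$ clears all negative exponents, exhibiting the right-hand side as a polynomial with non-negative coefficients in the positive coordinates. Hence $H_1$, and analogously $H_2$ and $H_3$, are strictly decreasing in every positive coordinate. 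In the negative drift case ($B(\bone)<A(\bone)$), $\sqrt{B(\bone)/A(\bone)}<1$ places $\bp_1$ inside the region $z_d<1$; direct substitution gives $H_2(\bp_1),H_3(\bp_1)>0$, and the monotonicity then forces $H_1,H_2,H_3>0$ at every strictly smaller positive point, confirming that $\bp_1$ lies on $\partial\mD$. The AM-GM inequality $2\sqrt{A(\bone)B(\bone)}+Q(\bone)<A(\bone)+Q(\bone)+B(\bone)=S(\bone)$ (strict because $A(\bone)\ne B(\bone)$) then shows $\bp_1$ strictly beats every candidate on $\mV_3$, whose values are bounded by $1/S(\bone)$ as described next.

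In the non-negative drift case the $\oS$-critical point has $z_d\ge 1$, so cannot appear on $\partial\mD$, and the candidates must come from $\mV_3$. A positive point $(\bzht{d},1,t)\in\mV_3$ lies on $\partial\mD$ precisely when the open polydisk of radii $(\bzht{d},1,t)$ avoids all zeros of $H$, which by the above monotonicity reduces to $H_1(\bzht{d},1,t)\ge 0$ (the $H_2$ condition is weaker since $S(\bzht{d},1)\ge A(\bzht{d})+Q(\bzht{d})$). This says $tz_1\cdots z_{d-1}S(\bzht{d},1)\le 1$ (using $\oS|_{z_d=1}=S(\bzht{d},1)$), so $z_1\cdots z_d\,t\le 1/S(\bzht{d},1)$; saturating the inequality reduces the task to minimizing $S(\bzht{d},1)$ over $\bzht{d}\in\mathbb{R}_{>0}^{d-1}$. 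Lemma~\ref{lem:minimum} applies once more, with the support inheriting positive coefficients and forward/backward balance from $\mS$ in the first $d-1$ coordinates, giving a unique minimum at $\bzht{d}=\bone$ with value $S(\bone)$, and hence the point $\bp_2$. Uniqueness in both regimes is inherited from the uniqueness of the critical point in Lemma~\ref{lem:minimum}.

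The main technical obstacle is handling the factor $H_3=1-z_d$, which vanishes on the entire hyperplane $\{z_d=1\}$ rather than on an isolated subvariety. This is precisely what forces the case split on the sign of $A(\bone)-B(\bone)$---whether the $\oS$-critical point lies in the region $z_d\le 1$ picked out by $H_3$---and what makes $\bp_2$ the intersection point $\mV_1\cap\mV_3$ in the non-negative drift case, where two factors of $H$ vanish simultaneously.
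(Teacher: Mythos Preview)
Your argument is correct and follows essentially the same route as the paper: factor $H=H_1H_2H_3$, eliminate $\mV_2$ by comparing the $t$-values forced by $H_1$ and $H_2$, then reduce the minimization of $|z_1\cdots z_d\,t|^{-1}$ on the remaining strata to minimizing $\oS$ via Lemma~\ref{lem:minimum}. Your write-up is in fact more explicit than the paper's in a couple of places (the monotonicity check that $\bp_1\in\partial\mD$, and the AM--GM comparison with $\mV_3$), though two small imprecisions are worth cleaning up: $H_3=1-z_d$ is constant in all variables but $z_d$, so ``strictly decreasing in every positive coordinate'' overstates it (non-increasing suffices for your purpose); and in the zero-drift case the $\oS$-critical point has $z_d=1$ and \emph{is} on $\partial\mD$ (it coincides with $\bp_2$), so the phrase ``cannot appear on $\partial\mD$'' should be softened to cover only the strictly positive drift case---your subsequent $\mV_3$ analysis then recovers the point anyway.
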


\begin{proof}
As $|z_1 \cdots z_d \, t|^{-1}$ decreases as $(\bz,t)$ moves away from the origin, any such minimizer must be a zero of $H_1$ or $H_2$.  Since $\oS(\bz)$ has non-negative coefficients, any zero of $H_1$ with positive coordinates is a minimal zero as $t = (z_1 \cdots z_d\oS(\bz))^{-1}$ increases as one of the $z_j$ decreases and the others are constant.  Furthermore, on $\mV_1 \cap \left(\mathbb{R}_{>0}\right)^d$
\[ |z_1 \cdots z_d \, t|^{-1} = \oS(\bz), \]
which by Lemma~\ref{lem:minimum} has a unique minimum corresponding to a unique critical point. The system
\[ \oS_{z_1}(\bz) = \cdots = \oS_{z_d}(\bz) = 0 \]
can be reduced to
\[ (1-z_1^2) \cdot [z_1^{-1}]\oS(\bz) \,=\, \cdots = (1-z_{d-1}^2) \cdot [z_{d-1}^{-1}]\oS(\bz) \,=\, B(\bzht{d}) - z_d^2A(\bzht{d}) \,=\, 0,\]
which has only the solution $(\bone,\sqrt{B(\bone)/A(\bone)})$ with positive coordinates, as $\oS$ has all non-negative coefficients.

If the drift is non-positive, $B(\bone) \leq A(\bone)$ and $\bp_1$ is a minimal zero of the product $H_1(\bz,t)H_3(\bz,t)$. Otherwise, any minimal zero of $H_1(\bz,t)H_3(\bz,t)$ that minimizes $|z_1 \cdots z_d \, t|^{-1}$ must lie on $\mV_1 \cap \mV_3$, where 
\[ |z_1 \cdots z_d \, t|^{-1} = \oS(\bzht{d},1), \]
and Lemma~\ref{lem:minimum} implies the minimizer is $\bp_2$. 

Finally, if $(\bz,t) \in \mV_2 \cap \left(\mathbb{R}_{>0}\right)^d$ then
\[ t = \frac{1}{z_1 \cdots z_d \left(Q(\bzht{d}) + z_dA(\bzht{d}) \right)} > \frac{1}{z_1 \cdots z_d\oS(\bz,t)} \]
since $z_1 \cdots z_dB(\bz) > 0$.  But this implies $(\bz,t)$ is not a minimal zero of $H$, as there exists $(\bz,s) \in \mV_1$ with $0<s<t$.
\end{proof}

In order to perform a local singularity analysis we will need to describe $\mV$ near points of interest. In our case, the singular set $\mV$ is the union of smooth manifolds $\mV_1,\mV_2,$ and $\mV_3$ (for each $i$, the gradient of $H_i$ never vanishes when $H_i=0$). Furthermore, we show in the proof of Theorem~\ref{thm:contrib} that any minimal singularity will not lie on $\mV_2$, so that any minimal singularity is either in $\mV_1$ alone, $\mV_3$ alone, or the intersection $\mV_1 \cap \mV_3$. As the gradients of $H_1$ and $H_3$ are linearly independent at any common zero, we say $\mV_1$ and $\mV_3$ are \emph{transverse}.

In this setting, the \emph{stratum} of minimal $\bw \in \mV$ is the intersection of the $\mV_j$ containing $\bw$. Minimal $\bw \in \mV$ with non-zero coordinates is called a \emph{minimal critical point} if it is a critical point (in the differential geometry sense) of the map $\phi(\bz) = \log(z_1 \cdots z_d)$ from the stratum of $\bw$ to $\mathbb{C}$. Algebraically, this means that the gradient of $\phi(\bz) = \log(z_1 \cdots z_d)$ at $\bz=\bw$ can be written as a linear combination of the gradients of the $H_j$ polynomials defining the strata of $\bw$. Critical points are those where the Cauchy integral can be locally manipulated into a so-called Fourier-Laplace integral, where saddle-point methods can be applied to obtain asymptotics.

General definitions of critical and contributing points, where local function behaviour dictates coefficient asymptotics, can be found in~\cite{PeWi13}. In particular, Proposition 10.3.6 of~\cite{PeWi13} gives an explicit characterization of contributing points: in our setting, a singularity of $Q(\bz,t)$ is a \emph{contributing point} if it is a minimal critical point that minimizes $|z_1 \cdots z_d \, t|^{-1}$ on $\partial \mD$ (the exponential order of the asymptotic contribution of that point is maximum).

\begin{theorem}
\label{thm:contrib}
When the drift is positive, there are at most $2^{d-1}$ contributing points.  The point $\bp_2$ is one, and the others are the points $(\bw,1,t)$ where
\[ \bw \in \{\pm1\}^{d-1}, \quad t = \frac{1}{w_1\cdots w_{d-1} \cdot S\left(\bw,1\right)}, \text{ and} \quad |t| = \frac{1}{S(\bone,1)}. \]

When the drift is negative, there are at most $2^{d+1}$ contributing points.  The point $\bp_1$ is one, and the others are the points $(\bw,w_d,t)$ where
\[ \bw \in \{\pm1\}^{d-1}, \qquad w_d = \nu \sqrt{\frac{B(\bw)}{A(\bw)}}, \qquad t = \frac{1}{w_d S\left(\bw,\overline{w}_d\right)}, \]
\[ |w_d| = \frac{\sqrt{B(\bone)}}{\sqrt{A(\bone)}}, \quad \text{ and } \quad |t| = \frac{\sqrt{A(\bone)}}{\sqrt{B(\bone)}S\left(\bone,\sqrt{A(\bone)/B(\bone)}\right)},\]
with $\nu$ a fourth root of unity (note that in order to satisfy the condition on $|t|$ it is necessary that $B(\bw)/A(\bw)>0$, so the square root can be taken unambiguously).

When the drift is zero, there are at most $2^d$ contributing points.  The point $\bp_1=\bp_2$ is one, and the others are the points $(\bw,w_d,t)$ where
\[ (\bw,w_d) \in \{\pm1\}^d, \qquad t = \frac{1}{w_1 \cdots w_d S\left(\bw,w_d\right)}, \text{ and} \quad |t| = \frac{1}{S(\bone,1)}. \]
\end{theorem}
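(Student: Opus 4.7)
The plan is to apply Proposition~10.3.6 of \cite{PeWi13}: a contributing singularity of $G/H$ is a minimal critical point of the map $\phi(\bz,t)=\log(z_1\cdots z_d\,t)$ on a stratum of $\mV$ whose exponential rate $|z_1\cdots z_d\,t|^{-1}$ attains the maximum value on $\partial\mD$. By Proposition~\ref{prop:minpts}, this maximum equals $S(\bone,1)$ in the non-negative drift case and $S(\bone,\sqrt{A(\bone)/B(\bone)})$ in the negative drift case, achieved at $\bp_2$ and $\bp_1$ respectively. The first task is to exclude $\mV_2$: on $\mV_2$ the exponential rate is $|Q(\bzht{d})+z_d A(\bzht{d})|$, and since the difference $\oS(\bz)-(Q(\bzht{d})+z_d A(\bzht{d}))=\oz_d B(\bzht{d})$ has non-negative coefficients and is not identically zero, a triangle-inequality argument analogous to the one in Proposition~\ref{prop:minpts} shows that minimal points of $\mV_2$ have strictly smaller rate than the optimum and hence cannot contribute. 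This leaves the smooth strata $\mV_1\setminus\mV_3$ and $\mV_1\cap\mV_3$, the latter of codimension two because $\partial_t H_1\ne 0$ on $\mV_1$ forces $\nabla H_1$ and $\nabla H_3=(0,\dots,0,-1,0)$ to be linearly independent at any common zero.

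Next I would derive the Lagrange critical-point equations on each stratum using the ``multiply by variable'' trick. Using $t\,\partial_t H_1=-1$ on $\mV_1$ to pin down the multiplier, the conditions on $\mV_1$ alone reduce to $H_1(\bz,t)=0$ together with $z_i\,\partial_{z_i}\oS(\bz)=0$ for $i=1,\dots,d$. The $\sigma_i$-symmetry of $\oS$ in $z_i$ for $i<d$ factors $z_i\,\partial_{z_i}\oS=(z_i-\oz_i)\,[z_i]\oS$, forcing $z_i\in\{\pm 1\}$ (the possibility $[z_i]\oS\equiv 0$ being excluded by the hypothesis of forward and backward steps in each coordinate). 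The last equation $z_d A(\bzht{d})-\oz_d B(\bzht{d})=0$ gives $z_d^2=B(\bzht{d})/A(\bzht{d})$. On the codimension-two stratum $\mV_1\cap\mV_3$, the constraint $z_d=1$ replaces this equation and the first $d-1$ reduce to $z_i\,\partial_{z_i}S(\bzht{d},1)=0$, again yielding $z_i\in\{\pm 1\}$ for $i<d$.

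Substituting these solutions back into $H_1=0$ determines $t$ and completes the enumeration. For negative drift, $|z_d|=\sqrt{B(\bone)/A(\bone)}<1$ at $\bp_1$ forces $H_3\ne 0$ nearby, so the only relevant stratum is $\mV_1$; writing the two solutions of $w_d^2=B(\bw)/A(\bw)$ as $w_d=\nu\sqrt{B(\bw)/A(\bw)}$ with $\nu$ ranging over fourth roots of unity gives a redundant but convenient upper bound of $2^{d+1}$ candidates, and the identity $\oz_d B(\bw)=w_d A(\bw)$ simplifies $H_1=0$ to $t=1/(w_d\,S(\bw,\ow_d))$. For positive drift the relevant stratum is $\mV_1\cap\mV_3$, producing at most $2^{d-1}$ candidates with $z_d=1$ and $t=1/(w_1\cdots w_{d-1}\,S(\bw,1))$. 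For zero drift, $\bp_1=\bp_2$ lies on both strata: the equation $z_d^2=B(\bone)/A(\bone)=1$ admits both $z_d=1$ (on $\mV_1\cap\mV_3$) and $z_d=-1$ (on $\mV_1\setminus\mV_3$), giving $2^{d-1}+2^{d-1}=2^d$ total candidates $(\bw,w_d,t)$ with $(\bw,w_d)\in\{\pm1\}^d$. In each regime the modulus condition stated in the theorem selects the subset of candidates that actually achieves the extremal rate.

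The main obstacle will be the $\mV_2$ exclusion for arbitrary complex minimal points (extending the positive-real reasoning from Proposition~\ref{prop:minpts}) and the careful handling of the codimension-two stratum where $\mV_1$ and $\mV_3$ meet transversely; once those are in place, the remaining work is algebraic bookkeeping powered by the $\sigma_i$-symmetry.
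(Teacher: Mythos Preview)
Your overall plan---exclude $\mV_2$, compute critical points on the strata $\mV_1$ and $\mV_1\cap\mV_3$, then filter by the modulus condition---is close in spirit to the paper's, but you invert the paper's order of operations and this creates a genuine gap. The paper does not try to enumerate all critical points. It first uses the fact that $1/H$ has non-negative power-series coefficients to conclude that every minimal point shares its coordinate-wise modulus with a positive-real minimal point, hence (by Proposition~\ref{prop:minpts}) with $\bp_1$ or $\bp_2$. Then, because $z_1\cdots z_d\,\oS(\bz)$ is a polynomial with non-negative coefficients, the triangle inequality forces every monomial of $\oS$ to share a common argument at any $(\bw,s)$ with $|\oS(\bw)|=\oS(|\bw|)$; the step-set hypotheses make this impossible unless $w_1,\dots,w_{d-1}$ are real, hence $\pm 1$. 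Only afterwards does the paper check that the surviving candidates satisfy the critical-point equations on their strata.

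The gap in your argument is the step ``$(z_i-\oz_i)\,[z_i]\oS=0$ forces $z_i\in\{\pm1\}$.'' The forward/backward-step hypothesis guarantees only that $[z_i]\oS=B_i(\bzht{i})$ is not the zero Laurent polynomial; it does not prevent $B_i$ from vanishing at particular complex points $\bzht{i}$. So your enumeration of critical points on $\mV_1$ (and likewise on $\mV_1\cap\mV_3$) is potentially incomplete, and the modulus filter you apply at the end cannot repair this on its own---you would still owe an argument that no critical point with some $B_i(\bzht{i})=0$ is minimal with the optimal rate. The way to close the gap is precisely the paper's non-negativity plus triangle-inequality argument, which shows directly that any \emph{contributing} (not merely critical) point already has $w_j\in\{\pm1\}$ for $j<d$, bypassing the issue. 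Your sketch for the $\mV_2$ exclusion is along the right lines and agrees with the paper's brief treatment.
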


\begin{proof}
As the power series expansion of $1/H(\bz,t)$ has non-negative coefficients, every minimal point has the same coordinate-wise modulus as an element of $\mV$ with positive coordinates (an element of $\partial \mD$ is the limit of a sequence in $\mD$ that makes the power series of $1/H$ approach infinity, but as the power series coefficients are non-negative the series only gets larger when each coordinate is replaced by its modulus). 

Thus, we search for points in $\mV$ with the same coordinate-wise modulus as $\bp_1$ and $\bp_2$. First, we note that no point in $\mV_2$ is minimal as its $t$ variable will be smaller than required.  On $\mV_1$, we seek points $(\bw,s)$ such that 
\[ |\oS(\bw,s)| = \oS(\bp_1) \qquad \text{or} \qquad |\oS(\bw,s)| = \oS(\bp_2). \]
Since $z_1 \cdots z_d\oS(\bz,t)$ is a polynomial with non-negative coefficients, the triangle inequality implies the only way this can happen is if every monomial of $\oS(\bz,t)$ has the same argument when evaluated at $\bw$. Our assumptions on $\mS$ imply that $w_1,\dots,w_{d-1}$ must be real (and thus $\pm1$) so the points in the statement of Theorem~\ref{thm:contrib} are the only potential minimizers of $|z_1 \cdots z_d \, t|^{-1}$, and are minimal points.

Computing the gradient of $H_1(\bz) = 1 - tz_1\cdots z_d\oS(\bz)$ shows that a point $(\bz,t)$ with stratum $\mV_1$ is critical if and only if $\bz$ satisfies 
\[ \oS_{z_1}(\bz) = \cdots = \oS_{z_d}(\bz) = 0, \]
while a point with stratum $\mV_1 \cap \mV_3$ is critical if and only if
\[ \oS_{z_1}(\bz) = \cdots = \oS_{z_{d-1}}(\bz) = 0, \quad z_d=1. \]
These equations are satisfied by the stated points, therefore we have found the set of contributing points.
\end{proof}

\section{Asymptotic Expansions}
\label{sec:Asym}
The results of Pemantle and Wilson~\cite{PeWi13} apply broadly to compute asymptotics when contributing points are known.  Now that the set of contributing points is characterized by Theorem~\ref{thm:contrib} it is a straightforward (though computationally intensive) matter to compute asymptotics, which we do for each of the cases in Theorem~\ref{thm:contrib}. Some of the technical and laborious proofs that are not of theoretical interest are given in Appendix~\ref{appendixA}. 

We make an exponential change of variables to convert complex contour integrals to integrals over $\mathbb{R}^d$. To this end we introduce some basic notation.
\begin{definition}
\label{def:E}
For $\bp\in \mathbb{C}^d$, define $E$ on $[-\pi, \pi]^d$ by
$$
E_\bp(\bt)  = (p_1e^{i\theta_1}, \dots, p_de^{i\theta_d})\\
$$
For fixed $\bp$, every function $f(\bz)$ of $\bz$ yields a corresponding function $\tilde{f}(\bt):=f\circ E_\bp$ of $\bt$ under this change of variable.
\end{definition}

For $1\leq j \leq d$, we use the usual notation $\partial_j$ for the partial differential operator $(\partial/\partial\theta_j)$ and define the functions $B_1(\bzht{1}),\dots,B_{d-1}(\bz_{\,\widehat{d-1}})$ by stipulating that $B_k$ is the unique Laurent polynomial such that
\begin{equation} \oS(\bz) = (z_k+\oz_k)B_k(\bzht{k}) + Q_k(\bzht{k}) \label{eqn:Bj} \end{equation}
for some Laurent polynomial $Q_k$. For notational convenience we set $B_d(\bzht{d}) := B(\bzht{d})$.

\begin{remark}
We note that for each $j \neq k$ with $j,k<d,$ 
each $B_j$ is symmetric in $z_k$ and $\oz_k$, and does not involve any power of $z_j$. Also $A, B, Q$ (and both $S$ and $\oS$) are symmetric in $z_k$ and $\oz_k$. 
\end{remark}

We also need the following quantities.
\begin{definition}
\label{def:Bpj}
For each $j<d$, define Laurent polynomials $A'_j, B'_j, A''_j, B''_j$ by 
\begin{align*}
A(\bzht{d}) &= (z_j+\oz_j)A'_j(\bzht{j}) + A''_j(\bzht{j})\\
B(\bzht{d}) &= (z_j+\oz_j)B'_j(\bzht{j}) + B''_j(\bzht{j}).
\end{align*}
\end{definition} 

Finally, for a differentiable function $f(\bz)$ we define
\[ \gradlog f(\bz) := \left( z_1 \partial_1 f, \dots, z_d \partial_d f \right). \]
We now have all the necessary tools to compute asymptotics, beginning with the positive drift case. 

\subsection{The Positive Drift Models}
\label{ss:posdrift}

In the positive drift case, when $A(\bone) < B(\bone)$, Theorem~\ref{thm:contrib} implies that we are dealing with contributing points on the stratum $\mV_1 \cap \mV_3$. The next result follows from Theorem 10.3.4 of Pemantle and Wilson~\cite{PeWi13}, where $\mathbf{e}_j$ is the $j$th standard basis vector (with a 1 in its $j$th entry and zeros elsewhere).

\begin{proposition} \label{prop:PosAsm}
Let $\Gamma$ be the square matrix whose first $2$ rows are $\gradlog H_1(\bp)$ and $\gradlog H_3(\bp)$, and whose last $d-1$ rows are $p_j \mathbf{e}_j$ for $j=1,\dots,d-1$.  Furthermore, define
\[ g(\bt) := \log\left( \frac{1}{(p_1\cdots p_d)e^{i(\theta_1 + \cdots +\theta_d)}\oS(p_1e^{i\theta_1},\dots,p_de^{i\theta_d})} \right). \]
Then 
{\small
\[ [t^{n\bone}]\Delta Q(\bz,t) = \bp^{n \bone} \cdot n^{-(d-1)/2} \cdot \left( (2\pi)^{-(d-1)/2}(d+1)^{-(d-1)/2} \frac{G(\bp)}{\det \Gamma \cdot \sqrt{\det g''(\bzer)}} +  O(n^{-1}) \right), \]
}
where $g''(\bzer)$ denotes the Hessian of $g(\bt)$ at the origin.
\end{proposition}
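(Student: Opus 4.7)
The plan is to realise this statement as a concrete instantiation of the general transverse multiple-point asymptotic formula of Pemantle and Wilson (\cite[Theorem~10.3.4]{PeWi13}), and to identify each factor in that formula with the explicit quantities appearing above. By Theorem~\ref{thm:contrib}, in the positive drift regime every contributing singularity lies on the transverse double stratum $\mV_1 \cap \mV_3$, where the two sheets are smooth and their logarithmic gradients are linearly independent, so the hypotheses of the cited theorem are met and the main task is bookkeeping of residues and Gaussian factors.

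First, I would start from the Cauchy integral representation of the diagonal coefficient as an integral of $G/H$ over a small product torus at the origin, and deform this torus along the logarithmic direction until it passes through the minimal polyradii determined by the contributing points, with the far-field contribution bounded by a smaller exponential. Near each contributing point $\bp$ I would perform two residue reductions: first a residue in $t$ along $H_1 = 0$, then a residue in $z_d$ along $H_3 = 1 - z_d = 0$. The combined Jacobian of these two implicit function applications, written in logarithmic coordinates and padded with the identity block $p_j \be_j$ on the remaining variables $z_1,\dots,z_{d-1}$, is precisely the matrix $\Gamma$ whose determinant appears in the statement. What is left is a $(d-1)$-dimensional torus integral in $z_1,\dots,z_{d-1}$ with amplitude $G(\bp)/\det\Gamma$.

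Second, I would apply the exponential substitution $E_\bp$ of Definition~\ref{def:E} to convert this torus integral into an integral over $[-\pi,\pi]^{d-1}$ whose integrand is $\exp(n\, g(\bt))$ times a smooth non-vanishing amplitude, with $g$ the function named in the statement. Critical-point status of $\bp$ (Theorem~\ref{thm:contrib}) guarantees that $\bt = \bzer$ is a critical point of $g$, and a standard multivariate Laplace evaluation then produces a Gaussian factor of order $n^{-(d-1)/2}$ whose coefficient involves $\sqrt{\det g''(\bzer)}$. Tracking the $(2\pi i)^{d+1}$ prefactor from the Cauchy formula, the signs and factors from the two residues, and the $dz_j = i z_j\, d\theta_j$ contributions from the exponential change of variables supplies the correct overall constant; summing the equal-modulus contributions from the at most $2^{d-1}$ contributing points identified in Theorem~\ref{thm:contrib} then yields the displayed asymptotic.

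The main obstacle I anticipate is verifying strict non-degeneracy of $g''(\bzer)$ along the stratum, together with the uniform bounds needed to run Laplace's method with an error of order $n^{-1}$. The essential input is the strict log-convexity of $\oS$ on the positive orthant provided by Lemma~\ref{lem:minimum}; however, some care is needed to translate this log-convexity in the $\bz$ variables into negative-definiteness of the restriction of $g''$ to the tangent directions of the stratum at $\bp$, and to confirm that no spurious degenerate directions arise from the constraint $z_d = 1$. Once this is done, the asymptotic reads off directly at $\bp_2$, and repeating the same analysis at each of the other contributing points $(\bw, 1, t_\bw)$ with $\bw \in \{\pm 1\}^{d-1}$ recovers the full $2^{d-1}$-fold sum in the statement.
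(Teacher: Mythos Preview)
Your overall approach---invoke Theorem~10.3.4 of \cite{PeWi13} at the transverse double points on $\mV_1\cap\mV_3$---is exactly what the paper does. However, you have missed the one substantive step in the paper's argument, and this leads you to misdescribe the conclusion.

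You speak of ``summing the equal-modulus contributions from the at most $2^{d-1}$ contributing points'' and of recovering ``the full $2^{d-1}$-fold sum in the statement.'' There is no such sum in the statement: the displayed formula is a single term at the single point $\bp=\bp_2$. The reason is that the numerator
\[
G(\bz,t)=(1+z_1)\cdots(1+z_{d-1})\Bigl(1-tz_1\cdots z_d\bigl(Q(\bzht{d})+2z_dA(\bzht{d})\bigr)\Bigr)
\]
vanishes at every other contributing point $(\bw,1,t_{\bw})$ with $\bw\in\{\pm1\}^{d-1}\setminus\{\bone\}$, because some factor $(1+w_j)$ is zero. This is the only non-bookkeeping observation in the paper's proof, and it is what justifies writing the answer as a single term rather than a sum. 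Without it, your proposed summation does not match the stated formula.

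A secondary remark: the detailed outline of residue reductions and Laplace estimates you give is essentially a re-derivation of \cite[Theorem~10.3.4]{PeWi13} itself, which is not needed here---the proposition simply quotes that theorem as a black box. Your concern about the non-degeneracy of $g''(\bzer)$ is legitimate but is handled separately (and trivially, via the explicit diagonal form of the Hessian computed from Lemma~\ref{lem:partialS}) in the subsequent proof of Theorem~\ref{thm:PosAsm}, not in this proposition.
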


\begin{proof}
As $\bp_2$ is the only contributing point where $G(\bz,t)$ does not vanish, Theorem 10.3.4 of Pemantle and Wilson~\cite{PeWi13} gives the above asymptotic result and the bound of $O(n^{-1})$ on the lower order terms.  
\end{proof}

Because $G(\bz,t)$ vanishes at all contributing points except for $\bp_2$, no positive drift model will have periodicity in its leading asymptotic term. Applying Proposition~\ref{prop:PosAsm} in our situation gives asymptotics in the positive drift case.

\begin{proof}[Proof of Theorem~\ref{thm:PosAsm}]
Theorem~\ref{thm:contrib} implies that the point $\bp_2 = (\bone,1,1/S(\bone,1))$ is the unique contributing point at which $G(\bz,t)$ does not vanish.  At this point, one can calculate that 
\[ \Gamma = \begin{pmatrix} 0 & 0 & 0 & \cdots & 0 & -1 & 0 \\ -1 & -1 & -1 & \cdots & -1 & -r & -1 \\ 1 & 0 & 0 & \cdots & 0 & 0 & 0 \\ 0 & 1 & 0 & \cdots & 0 & 0 & 0\\ 0 & 0 & 1 & 0 & \cdots & 0 & 0 \\ \vdots & \ddots & \ddots & \ddots & \ddots & \vdots & \vdots \\ 0 & 0 & 0 & \cdots & 1 & 0 & 0 \end{pmatrix}\]
for a real number $r<1$, which does not appear in the determinant of $\Gamma$, and 
\[\frac{G(\bone,1/|S|)}{H_2(\bone,1/|S|)} = 2^{d-1}\frac{\left(B(\bone) - A(\bone)\right)}{S(\bone)} \frac{S(\bone)}{B(\bone)} = 2^{d-1} \left(1 - \frac{A(\bone)}{B(\bone)}\right). \]
The Chain Rule and Lemma~\ref{lem:partialS} of Appendix~\ref{appendixA} imply that $g''(\bzer)$ is the diagonal matrix
\[ g''(\bzer) = \begin{pmatrix} 
\frac{2 B_1(\bone)}{(d+1)S(\bone)} & 0 & 0 & \cdots & 0 \\
0 & \frac{2 B_2(\bone)}{(d+1)S(\bone)} & 0 & \cdots & 0 \\
\vdots & \ddots & \ddots & \ddots & \vdots \\
0 & 0 & \cdots & \frac{2 B_{d-2}(\bone)}{(d+1)S(\bone)} & 0 \\
0 & 0 & \cdots & 0 & \frac{2 B_{d-1}(\bone)}{(d+1)S(\bone)} 
\end{pmatrix},
 \]
so that Proposition~\ref{prop:PosAsm} gives
\begin{equation*}
\begin{split} s_n & = S(\bone)^n  n^{-1/2} (2\pi)^{-\frac{d-1}{2}} (d+1)^{\frac{-(d-1)}{2}} \cdot \\
 & \frac{2^{d-1} \left(1 - \frac{A(\bone)}{B(\bone)}\right)}{\sqrt{(d+1)^{-(d-1)}2^{d-1}S(\bone)^{-(d-1)/2}(b_1\cdots b_{d-1})}} +  O(S(\bone)^n n^{-3/2}),
\end{split}
\end{equation*}
which simplifies to~\eqref{eqn:PosAsm}.
\end{proof}

\subsection{The Negative Drift Models}
\label{ss:negdrift}

In the negative drift case, when $A(\bone) < B(\bone)$, Theorem~\ref{thm:contrib} implies that we are dealing with contributing points on the stratum $\mV_1$ where $\mV$ itself is locally a manifold.  This simplifies computations, but an added difficulty is that the numerator vanishes to at least first order at every critical point.  We now state a general theorem that allows one to calculate asymptotics under these conditions, coming from Raichev and Wilson~\cite{RaWi08} (note that for us the dimension $d$ is one less than the number of variables of $F$).

\begin{theorem}
\label{thm:multasm}
Fix natural number $N>0$ and recall the above notation from this section. In a neighborhood in $\mV$ of a smooth critical point $\bp$ on $\mV_1$, write $t = h(\bz)$. Define  $u, \tilde{g}$ and $\underline{\tilde{g}}$ by
\begin{align*}
u(\bz) & = - \frac{G(\bz, h(\bz))}{h(\bz) (\partial H/\partial t)(\bz, h(\bz))} \\
\tilde{g}(\bt) &= \log \frac{\tilde{h}(\bt)}{\tilde{h}(\bzer)} + i \sum_{j=1}^d \theta_j  \\
\underline{\tilde{g}}(\bt) &= \tilde{g}(\bt) - \frac{1}{2} \bt^T \tilde{g}''(\bzer)\bt.
\end{align*}
Supposing that the Hessian determinant $ \det \tilde{g}''(\bzer)\neq \bzer$, define
\begin{equation}
\Psi^{(\bp)}_{n,N} := \bp^{-n \bone}\cdot n^{-d/2}\cdot (2\pi)^{-d/2}(\det \tilde{g}''(\bzer))^{-1/2}
\sum_{k=0}^{N-1} n^{-k} L_k(\tilde{u},\tilde{g})  , \label{eqn:Psi}
\end{equation}
where 
\[ L_k(\tilde{u},\tilde{g}) = \sum_{l=0}^{2k} \frac{\mathcal{H}^{k+l}(\tilde{u}\underline{\tilde{g}}^l)(\bzer)}{(-1)^k2^{k+l}l!(k+l)!} \]
with $\mathcal{H}$ the differential operator
\[ \mathcal{H} = - \sum_{1 \leq a,b \leq d} (\tilde{g}''(\bzer)^{-1})_{a,b}\partial_a\partial_b. \] Then, as $n\rightarrow\infty$,
\[ [t^{n\bone}]\Delta Q(\bz,t) = \sum_{\bp \in W} \Psi^{(\bp)}_{n,N}  + O(n^{-N}).\]
\end{theorem}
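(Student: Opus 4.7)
The plan is to derive the asymptotic expansion via the standard multivariate saddle-point analysis outlined in Pemantle--Wilson~\cite{PeWi13}, reducing the Cauchy integral to a Fourier--Laplace integral at each contributing point and then applying a formal stationary phase expansion. First I would start from the multivariate Cauchy integral formula~\eqref{eq:mCIF} for $[t^{n\bone}]\Delta Q(\bz,t)$, taken over a product of small circles inside the domain of convergence $\mD$. Because the contributing points in $W$ all lie on the smooth sheet $\mV_1$ and minimize $|z_1\cdots z_d\,t|^{-1}$ on $\partial\mD$, I would push the $t$-circle outward through $\mV_1$ and collect residues at $t = h(\bz)$. Standard arguments (using that off $W$ the modulus $|h(\bz)|^{-1}$ is strictly smaller than at the contributing points, plus a compactness/partition-of-unity argument) show that contributions from the tails of the contour away from neighborhoods of the $\bp\in W$ are exponentially smaller than $\bp^{-n\bone}$, so only local contributions matter.

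Next, after the residue step the local integrand near each $\bp\in W$ takes the form
\[
\bp^{-n\bone}\cdot\frac{1}{(2\pi i)^d}\int_{\mC'} u(\bz)\,\frac{h(\bp)^n}{h(\bz)^n}\,\frac{d\bz}{z_1\cdots z_d},
\]
with $u$ as in the statement (the factor $-1/(h\,\partial_t H)$ coming from the residue and the sign conventions). I would then apply the exponential change of variables $\bz = E_\bp(\bt)$ from Definition~\ref{def:E}, which turns the $d$-dimensional torus integral into one over $[-\pi,\pi]^d$ and transforms the exponent into $-n\bigl[\log(\tilde h(\bt)/\tilde h(\bzer)) + i\sum_j \theta_j\bigr] = -n\tilde g(\bt)$. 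The critical point condition at $\bp$ guarantees $\nabla \tilde g(\bzer) = \bzer$, so $\bt = \bzer$ is a stationary point, and one verifies $\mathrm{Re}\,\tilde g(\bt)\ge 0$ with equality only at $\bzer$ (this uses minimality of $\bp$ and the non-negativity properties of the coefficients of $H_1$ exploited in Proposition~\ref{prop:minpts}). The nondegeneracy assumption $\det \tilde g''(\bzer)\ne 0$ makes this a standard Fourier--Laplace integral with large parameter $n$.

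The core remaining step is to produce the explicit asymptotic series. I would split $\tilde g(\bt) = \tfrac12\bt^T\tilde g''(\bzer)\bt + \underline{\tilde g}(\bt)$ into its quadratic part and the higher-order remainder $\underline{\tilde g}$, expand $e^{-n\underline{\tilde g}(\bt)}$ as a formal power series in $\underline{\tilde g}$, and integrate term by term against the Gaussian density $e^{-n \bt^T \tilde g''(\bzer)\bt/2}$ multiplied by $\tilde u(\bt)$. Using the standard identity that polynomial expectations against this Gaussian are computed by the differential operator $\mathcal H$ (which acts as the inverse-Hessian Laplacian), one obtains exactly the coefficient $L_k(\tilde u,\tilde g)$ at order $n^{-k}$ in~\eqref{eqn:Psi}, with the overall prefactor $(2\pi)^{-d/2}(\det \tilde g''(\bzer))^{-1/2}$ coming from the Gaussian normalization. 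Truncating after $N$ terms and bounding the tail by the usual Watson's-lemma-type argument gives the error $O(n^{-N})$, uniformly in $\bp\in W$.

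The main obstacle will be the bookkeeping in the last step: producing the precise combinatorial coefficients $L_k$ and justifying term-by-term integration of the formal series (including the factor $1/(z_1\cdots z_d)$ absorbed into $\tilde u$, and the compatibility of the local $h$ parametrization with the global contour deformation). These are technical but standard, and in fact the entire argument is carried out in detail in Raichev--Wilson~\cite{RaWi08}; so my proof would largely consist of verifying that our $G$, $H$, and the stratum $\mV_1$ satisfy the hypotheses of their theorem (smoothness of $\mV_1$ at each $\bp\in W$, finiteness of $W$, nonvanishing Hessian) and then invoking their result directly, noting that our dimension convention differs from theirs by one because $t$ plays the role of the final variable in the diagonal.
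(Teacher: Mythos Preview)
Your proposal is correct and matches the paper's approach: the paper does not prove Theorem~\ref{thm:multasm} at all but simply states it as a specialization of Raichev--Wilson~\cite{RaWi08}, noting only the dimension shift, which is exactly where your sketch lands. Your additional outline of the residue/Fourier--Laplace/stationary-phase machinery is faithful to what Raichev--Wilson do, so there is nothing to correct.
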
 

Lemma~\ref{lem:horm_simp} of Appendix~\ref{appendixA} shows that the term $L_1(\tilde{u},\tilde{g})$, which will determine dominant asymptotics for negative drift models, simplifies considerably for the functions we consider.  In the setting of this section we have
\begin{align*}
\tilde{u}(\bt) &= \frac{(1+p_1e^{i\theta_1}) \cdots (1+p_{d-1}e^{i\theta_{d-1}})\left(1 - p_d^2e^{2i\theta_d}A(\bpht{d})/B(\bpht{d})\right)}{1 - p_de^{i\theta_d}} \\
\tilde{g}(\bt) &= \log\oS(\bp) - \log\oS(p_1e^{i\theta_1},\dots,p_de^{i\theta_d}),
\end{align*}
and Lemma~\ref{lem:partialS} of Appendix~\ref{appendixA} implies that $\tilde{g}''(\bzer)$ is the diagonal matrix
\[ \tilde{g}''(\bzer) = \begin{pmatrix} 
\frac{2 p_1 B_1(\bp_{\hat{1}})}{\oS(\bp)} & 0 & 0 & \cdots & 0 \\
0 & \frac{2 p_2 B_2(\bp_{\hat{2}})}{\oS(\bp)} & 0 & \cdots & 0 \\
\vdots & \ddots & \ddots & \ddots & \vdots \\
0 & 0 & \cdots & \frac{2 p_{d-1} B_{d-1}(\bp_{\widehat{d-1}})}{\oS(\bp)} & 0 \\
0 & 0 & \cdots & 0 & \frac{2 B_d(\bp_{\hat{d}})}{ p_d \oS(\bp)}
\end{pmatrix}.
 \]

Extensive calculations, given by Proposition~\ref{prop:NegAsm} and~Proposition~\ref{prop:NegAsm2} in Appendix~\ref{appendixA}, then give the following.

\begin{proposition}
Let $\mS$ be a step set that is symmetric over all but one axis and takes a step forwards and backwards in each coordinate, and let $W$ be the set of contributing points given by Theorem~\ref{thm:contrib}.  If $\mS$ has negative drift, then the number of walks of length $n$ that never leave the non-negative orthant satisfies
\begin{equation} s_n = \sum_{\bp \in W} 
\left[ (p_1\cdots p_dp_t)^{-n}  n^{-d/2-1} \left(K_\bp C_\bp + O(n^{-1})\right) \right],
\end{equation}
where
\begin{align*}
K_\bp & = 2^{-d} \, \pi^{-d/2} \, \oS(\bp)^{d/2} \, \left(p_1 \cdots p_{d-1} \cdot B_1(\bp_{\hat{1}}) \cdots B_{d-1}(\bp_{\widehat{d-1}}) \, B(\bp_{\widehat{d}})/p_d\right)^{-1/2} \\[+2mm]
C_{\bp} &= \frac{\overline{S}(\bp)\prod_{j<d} (1+p_j)}{1-p_d} \left[ \frac{1}{A(\bp)p_d(1-p_d)} + \sum_{j=1}^{d-1} \frac{1-p_j}{2p_jB_j(\bp)}\left(\frac{A'_j(\bp)}{A(\bp)} - \frac{B'_j(\bp)}{B(\bp)} \right)\right].
\end{align*}
\end{proposition}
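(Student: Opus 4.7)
\medskip\noindent\textbf{Proof proposal.} The plan is to apply Theorem~\ref{thm:multasm} (the Raichev--Wilson smooth-point asymptotic formula) to the diagonal representation of $F(\bone, t)$ coming from Theorem~\ref{thm:diag2}, summing the asymptotic contribution of each point in the set $W$ of contributing singularities identified in Theorem~\ref{thm:contrib}. In the negative drift case, every point of $W$ lies on the smooth manifold $\mV_1$ (and on no other component), since Theorem~\ref{thm:contrib} produces points of the form $(\bw, w_d, t)$ with $w_d = \nu\sqrt{B(\bw)/A(\bw)}$ for a fourth root of unity $\nu$, so $w_d \neq 1$ and thus no intersection with $\mV_3$ occurs, and Proposition~\ref{prop:minpts} already rules out $\mV_2$. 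The singular variety is therefore locally a single smooth hypersurface at each $\bp \in W$, which is precisely the setting in which Theorem~\ref{thm:multasm} applies with the data $\tilde{u}$ and $\tilde{g}$ described above.

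The first key observation is that the numerator $G(\bz,t)$ vanishes at every contributing point. Indeed, the factor $\bigl(1 - p_d^2 A(\bpht{d})/B(\bpht{d})\bigr)$ appearing in $\tilde{u}(\bt)$ (after writing $G/H_2$ at an $\mV_1$ critical point) vanishes exactly when $p_d^2 = B(\bpht{d})/A(\bpht{d})$, which is the defining condition of the contributing points on $\mV_1$. Consequently $L_0(\tilde{u}, \tilde{g}) = \tilde{u}(\bzer) = 0$ and the leading term in the expansion of $\Psi^{(\bp)}_{n,N}$ is $n^{-1}L_1(\tilde{u}, \tilde{g})$, producing the predicted order $n^{-d/2-1}$. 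The Hessian $\tilde{g}''(\bzer)$ is the diagonal matrix displayed in the excerpt (derived via Lemma~\ref{lem:partialS}), so $(\det \tilde{g}''(\bzer))^{-1/2}$ immediately yields the prefactor $\oS(\bp)^{d/2}/\sqrt{p_1\cdots p_{d-1}\,B_1\cdots B_{d-1}\,B_d/p_d}$ (times powers of $2$), which, combined with $(2\pi)^{-d/2}$, gives the quantity $K_\bp$.

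Next, I would compute $L_1(\tilde{u}, \tilde{g})$ using the invocation of Lemma~\ref{lem:horm_simp} from Appendix~\ref{appendixA}, which gives a drastically simplified formula precisely when $\tilde{u}(\bzer) = 0$ (the generic formula involves six terms, most of which drop out). Since $\tilde{g}''(\bzer)$ is diagonal, the operator $\mathcal{H}$ reduces to a weighted sum of $\partial_j^2$ and so $L_1$ becomes a sum of second partial derivatives of $\tilde{u}\underline{\tilde{g}}$-type expressions, each weighted by $\oS(\bp)/(2 p_j B_j(\bpht{j}))$ (or its $j=d$ analog). Writing $\tilde{u}(\bt) = (1 - p_d e^{i\theta_d})^{-1} \prod_{j<d}(1+p_je^{i\theta_j}) \cdot \tilde{v}(\bt)$ with $\tilde{v}(\bzer) = 0$, the second derivatives split naturally into a piece that differentiates the vanishing factor $\tilde{v}$ twice (producing the term with $1/(A(\bp)p_d(1-p_d))$) and pieces that pair one derivative of $\tilde{v}$ with one derivative of the product $\prod(1+p_je^{i\theta_j})$ or of $\tilde{g}$ (producing the sum $\sum_{j<d}$ involving $A'_j, B'_j$). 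Collecting these into a common factor of $\oS(\bp)\prod_{j<d}(1+p_j)/(1-p_d)$ gives $C_\bp$.

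The main obstacle is the careful bookkeeping in the computation of $L_1$: one must track which second partial derivatives survive after differentiating $\tilde{u}\underline{\tilde{g}}$ at the origin, and repeatedly use the identity $1 - p_d^2 A(\bpht{d})/B(\bpht{d}) = 0$ together with the symmetry of $S$ in $z_j, \oz_j$ for $j < d$ to simplify. In particular, cross-terms of the form $\partial_j \partial_d$ must be shown to cancel (since the Hessian of $\tilde{g}$ is diagonal, they do not contribute through $\mathcal{H}$, but their appearance through $\underline{\tilde{g}}$ needs to be ruled out), and the coefficients of $A'_j/A - B'_j/B$ must be recognized as arising from differentiating $\log\oS$ with respect to $\theta_j$ in the presence of the relation defining the critical point. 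These computations are the content of Proposition~\ref{prop:NegAsm} and Proposition~\ref{prop:NegAsm2} of Appendix~\ref{appendixA}, and applying them pointwise at each $\bp \in W$ and summing yields the stated asymptotic expression with the absorbed factor $p_t^{-n} = \bigl(p_1\cdots p_d\bigr)^n \oS(\bp)^n$, absorbed into the $(p_1\cdots p_d p_t)^{-n}$ term.
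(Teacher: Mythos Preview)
Your overall strategy is exactly that of the paper: apply Theorem~\ref{thm:multasm} at the smooth points of $W$, observe $L_0=\tilde{u}(\bzer)=0$ so the leading term is $n^{-1}L_1$, read off $K_\bp$ from the diagonal Hessian of $\tilde{g}$, and compute $L_1$ via Lemma~\ref{lem:horm_simp}. That part is fine and matches Propositions~\ref{prop:NegAsm} and~\ref{prop:NegAsm2}.

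However, your account of the $L_1$ bookkeeping contains a genuine error. You write that the mixed third partials ``of the form $\partial_j\partial_d$ must be shown to cancel \ldots\ their appearance through $\underline{\tilde{g}}$ needs to be ruled out.'' In fact the opposite is true: the terms $\partial_d\tilde{u}(\bzer)\cdot\partial_d\partial_j^2\tilde{g}(\bzer)$ are the \emph{only} survivors in $\mathcal{H}^2(\tilde{u}\underline{\tilde{g}})(\bzer)$, and they do not cancel---they combine with the $\mathcal{H}(\tilde{u})$ contribution to produce the factor $(1-p_j)$ in the sum defining $C_\bp$. Concretely, $-\tfrac12\mathcal{H}(\tilde{u})(\bzer)$ alone gives coefficient $1/(2p_jB_j)$ in front of $A'_j/A-B'_j/B$, while $(-1/8)\mathcal{H}^2(\tilde{u}\underline{\tilde{g}})(\bzer)$ contributes $-1/(2B_j)$; their sum is $(1-p_j)/(2p_jB_j)$. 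If you had actually ruled those terms out, you would obtain the wrong formula.

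Relatedly, your attribution of the individual pieces of $C_\bp$ is scrambled. Writing $\tilde{u}=XYZ$ as in the paper (with $Y$ the vanishing factor), for $k<d$ one has $\partial_kY(\bzer)=0$, so terms ``pairing one derivative of $\tilde{v}$ with one derivative of $\prod(1+p_je^{i\theta_j})$'' vanish; the $A'_j,B'_j$ contribution from $\mathcal{H}(\tilde{u})$ comes from $X(\partial_j^2Y)Z$, i.e.\ differentiating the vanishing factor \emph{twice} in the $j$th direction. The term $1/(A(\bp)p_d(1-p_d))$ comes instead from $\partial_d^2\tilde{u}(\bzer)=X[(\partial_d^2Y)Z+2(\partial_dY)(\partial_dZ)]$. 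These misattributions would not by themselves break the proof if you carried the computation through correctly, but together with the cancellation claim they indicate that the mechanism you describe would not actually yield the stated $C_\bp$.
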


Examining the set of contributing points given by Theorem~\ref{thm:contrib} implies that only those whose first $d-1$ coordinates are 1 contribute to dominant asymptotics (otherwise they have a $-1$ coordinate and $C_{\bp}$ is zero). Furthermore, if $c=\sqrt{B(\bone)/A(\bone)}$ and $\left|S(\bone,c\,\nu)\right| = S(\bone,c)$ then $\nu$ must be 1 if $Q \neq 0$ and must be either 1 or $-1$ if $Q=0$. Putting everything together gives Theorem~\ref{thm:NegAsm}. Note that $\rho$ is the reciprocal of $c$, so $\oS$ is replaced by $S$ in the theorem statement, and the radicand appearing in $C_{-\rho}$ is positive as $S(\bone,-\rho)$ and each $b_j(\bone,-\rho)$ are negative when $Q=0$. Applying Proposition~\ref{prop:PosAsm} in our situation gives asymptotics in the positive drift case. Because two contributing singularities may dictate the dominant asymptotic term, there are negative drift models whose leading asymptotic constant depends on the parity of $n$.

\section{Applications to 2D Models Restricted to a Quadrant}
\label{sec:2DWalks}

The study of two dimensional lattice walks restricted to the non-negative quadrant has been a very active area of interest in several sub-areas of combinatorics (see, for instance, the citations in our Introduction above) and has applications to several branches of applied mathematics, including queuing theory and the study of linear polymers.  The seminal work of  Bousquet-M{\'e}lou and Mishna~\cite{ BoMi10} gave a uniform approach to several enumerative questions, including the nature of a model's generating function (algebraic, D-finite, etc.) and the determination of exact or asymptotic counting formulas.  In particular, they used the \emph{orbit sum method} (in a manner similar to Section~\ref{sec:KernelMethod}) to prove that the generating functions corresponding to 22 of the 79 non-equivalent two dimensional models were D-finite (they conjectured that one additional model was D-finite and that the rest were not).  Around the same time, Bostan and Kauers~\cite{ BoKa09} used computer algebra approaches to guess differential equations satisfied by the generating functions of 23 models (the 22 proven D-finite by Bousquet-M{\'e}lou and Mishna and the one they conjectured to be D-finite) which they then used to guess dominant asymptotics for these models (see Table~\ref{tab:guessed}).  They later proved, using another computer algebra approach, that the 23$^\text{rd}$ conjectured D-finite model of Bousquet-M{\'e}lou and Mishna is in fact algebraic (and thus also D-finite)\footnote{This was later proved using non-computer based arguments by Bostan, Kurkova, and Raschel~\cite{BoKuRa17} and Bousquet-M{\'e}lou~\cite{Bousquet-Melou2016}.}.  Furthermore, 5 of the remaining 56 models were proven to admit non D-finite generating functions by Melczer and Mishna~\cite{ MeMi14a} and strong evidence for non D-finiteness of the final 51 generating functions has been provided by several sources~\cite{ KuRa12, BoRaSa14}. 

We now look at the application of the general formulas developed in Section~\ref{sec:Asym} to proving the guessed asymptotics in Table~\ref{tab:guessed}.

\begin{table}[ht]
\centering
\begin{tabular}{ | c | c @{ \hspace{0.01in} }@{\vrule width 1.2pt }@{ \hspace{0.01in} }  c | c @{ \hspace{0.01in} }@{\vrule width 1.2pt }@{ \hspace{0.01in} }  c | c |  }
  \hline
    $\mS$ & Asymptotics & $\mS$ & Asymptotics & $\mS$ & Asymptotics \\ \hline
  &&&&& \\[-5pt] 
  \diagr{N,S,E,W}  & $\frac4\pi \cdot \frac{4^n}n$ &
  \diagr{NE,NW,S}  & $\frac{\sqrt{3}}{2\sqrt{\pi}} \cdot \frac{3^n}{\sqrt{n}}$ &
  \diagr{N,SE,SW} & $\frac{A_n}{\pi} \cdot \frac{(2\sqrt{2})^n}{n^2}$  \\
  \diagr{NE,SE,NW,SW}  & $\frac2\pi \cdot \frac{4^n}n$ &
  \diagr{N,NW,NE,S}  & $\frac4{3\sqrt{\pi}} \cdot \frac{4^n}{\sqrt{n}}$ &
  \diagr{N,S,SE,SW} & $\frac{B_n}{\pi} \cdot \frac{(2\sqrt{3})^n}{n^2}$ \\
  \diagr{N,S,NE,SE,NW,SW} & $\frac{\sqrt{6}}\pi \cdot \frac{6^n}n$  &
  \diagr{N,NE,NW,SE,SW}  & $\frac{\sqrt{5}}{3\sqrt{2\pi}} \cdot \frac{5^n}{\sqrt{n}}$ &
  \diagr{NE,NW,SE,SW,S}  & $\frac{C_n}{\pi} \cdot \frac{(2\sqrt{6})^n}{n^2}$ \\
  \diagr{N,S,E,W,NW,SW,SE,NE}  & $\frac{8}{3\pi} \cdot \frac{8^n}n$ &
  \diagr{NE,NW,E,W,S}  & $\frac{\sqrt{5}}{2\sqrt{2\pi}} \cdot \frac{5^n}{\sqrt{n}}$ &
  \diagr{N,E,W,SE,SW}  & $\frac{\sqrt{8}(1+\sqrt{2})^{7/2}}{\pi} \cdot \frac{(2+2\sqrt{2})^n}{n^2}$ \\
  \diagr{NE,W,S}   & $\frac{2\sqrt{2}}{\Gamma(1/4)} \cdot \frac{3^n}{n^{3/4}}$ &
  \diagr{N,NW,NE,E,W,S} & $\frac{2\sqrt{3}}{3\sqrt{\pi}} \cdot \frac{6^n}{\sqrt{n}}$ &
  \diagr{N,E,W,S,SW,SE}  & $\frac{\sqrt{3}(1+\sqrt{3})^{7/2}}{2\pi} \cdot \frac{(2+2\sqrt{3})^n}{n^2}$  \\
  \diagr{N,E,SW}   & $\frac{3\sqrt{3}}{\sqrt2\Gamma(1/4)} \cdot \frac{3^n}{n^{3/4}}$ &
  \diagr{N,E,W,NE,NW,SE,SW} & $\frac{\sqrt{7}}{3\sqrt{3\pi}} \cdot \frac{7^n}{\sqrt{n}}$ & 
  \diagr{NE,NW,E,W,SE,SW,S}  & ${\scriptstyle \frac{\sqrt{570-114\sqrt{6}}(24\sqrt{6}+59)}{19\pi} \cdot \frac{(2+2\sqrt{6})^n}{n^2}}$ \\
  \diagr{N,NE,E,S,SW,W}   & $\frac{\sqrt{6\sqrt{3}}}{\Gamma(1/4)} \cdot \frac{6^n}{n^{3/4}}$ &
  \diagr{N,W,SE}   & $\frac{3\sqrt{3}}{2\sqrt{\pi}} \cdot \frac{3^n}{n^{3/2}}$ &
  \diagr{E,SE,W,NW}  & $\frac{8}{\pi} \cdot \frac{4^n}{n^2}$ \\
  \diagr{NE,E,SW,W}  & $\frac{4\sqrt3}{3\Gamma(1/3)} \cdot \frac{4^n}{n^{2/3}}$ &
  \diagr{NW,SE,N,S,E,W}   & $\frac{3\sqrt{3}}{2\sqrt{\pi}} \cdot \frac{6^n}{n^{3/2}}$  && \\
\hline
\end{tabular}
\caption{Asymptotics for the $23$ D-finite models of Bostan and Kauers.} \label{tab:guessed}
\vspace{-0.2in}
{\small
\[
A_n = \begin{cases} 24\sqrt{2} &: n \text{ even} \\ 32 &: n \text{ odd} \end{cases} ,  \quad
B_n = \begin{cases} 12\sqrt{3} &: n \text{ even} \\ 18 &: n \text{ odd} \end{cases} ,  \quad
C_n = \begin{cases} 12\sqrt{30} &: n \text{ even} \\ 144/\sqrt{5} &: n \text{ odd} \end{cases} \]
}
\end{table} 

\subsection{The Highly Symmetric Models}
\label{ss:2Dhighasm}
\[ \diagr{N,S,E,W} \qquad \diagr{NE,SE,NW,SW} \qquad \diagr{N,S,NE,SE,NW,SW} \qquad \diagr{N,S,E,W,NW,SW,SE,NE} \]
Four of the models in Table~\ref{tab:guessed} have step sets that are symmetric over every axis.  This means that their asymptotics follow directly from the work of Melczer and Mishna~\cite{ MelczerMishna2016} (see Theorem~\ref{thm:MeMiAsm} above). The asymptotic order is $|\mS|^n n^{-1}$ in each case.

\subsection{Positive Drift Models Missing One Symmetry}
\label{ss:2Dposasym}
\[ \diagr{NE,NW,S} \qquad \diagr{N,NW,NE,S} \qquad \diagr{N,NE,NW,SE,SW} \qquad \diagr{NE,NW,E,W,S} \qquad   \diagr{N,NW,NE,E,W,S} \qquad \diagr{N,E,W,NE,NW,SE,SW} \]
There are six models whose step sets are missing one symmetry and have positive drift; one can directly apply Theorem~\ref{thm:PosAsm} to prove the asymptotics listed. The asymptotic order is $|\mS|^n n^{-1/2}$ in each case.

\subsection{Negative Drift Models Missing One Symmetry}
\label{ss:2Dnegasym}
\[ \diagr{N,SE,SW} \qquad \diagr{N,S,SE,SW} \qquad \diagr{NE,NW,SE,SW,S} \qquad \diagr{N,E,W,SE,SW} \qquad \diagr{N,E,W,S,SW,SE}  \qquad \diagr{NE,NW,E,W,SE,SW,S} \]
There are six models whose step sets are missing one symmetry and have negative drift; one can apply Theorem~\ref{thm:NegAsm} to prove the asymptotics listed (we note that the original table of guessed asymptotics by Bostan and Kauers~\cite{BoKa09} has small errors in the constants for the first three of these models). The asymptotic order is $\left(Q(\bone)+2\sqrt{A(\bone)B(\bone)}\right)^n n^{-2}$ in each case.

\begin{example} Consider the model defined by step set 
\[ \mS = \{ (0,1),(-1,-1),(0,-1),(1,-1) \} = \{N,SE,S,SW\}.\]  
Here we have 
\begin{align*}
G(\bz,t) &= (1+x) \left(1 - y^2 (\ox + 1 + x)\right) / (1-y)  \\
H(\bz,t) &=  1-t(x + y^2+xy^2+x^2y^2),
\end{align*}
and two of the eight possible points described by Theorem~\ref{thm:contrib} are contributing points: $\bp_1 = (1,1/\sqrt{3},1/2)$ and $\bp_2 = (1,-1/\sqrt{3},1/2)$. Using Sage to implement Proposition~\ref{prop:NegAsm}, we can calculate the contribution at each contributing point to be
\begin{align*}
 \Psi^{(\bp_1)}_{n} &= \frac{3\sqrt{3}(2+\sqrt{3})}{\pi} \cdot \frac{(2\sqrt{3})^n}{n^2}\\
 \Psi^{(\bp_2)}_{n} &= \frac{3\sqrt{3}(2-\sqrt{3})}{\pi} \cdot \frac{(-2\sqrt{3})^n}{n^2}
\end{align*}
so that the number of walks of length $n$ satisfies
\begin{align*} 
s_n &= \frac{(2\sqrt{3})^n}{n^2} \cdot \frac{3\sqrt{3}}{\pi}\left(\sqrt{3}(1-(-1)^n) + 2(1+(-1)^n)  + O(n^{-1})\right) \\
&= 
\left\{ \begin{array}{ll}    
\frac{(2\sqrt{3})^n}{n^2} \cdot \left( \frac{12\sqrt{3}}{\pi} + O(n^{-1})\right) &: n \text{ even} \\ 
\frac{(2\sqrt{3})^n}{n^2} \cdot \left( \frac{18}{\pi} + O(n^{-1})\right) &: n \text{ odd} \end{array}  \right.
\end{align*}
\end{example}

The first three models here each have two contributing points that determine the dominant asymptotics, giving a periodicity in the coefficients as seen in the above example.  

\subsection{The Exceptional (Zero Orbit Sum) Algebraic Cases}
\label{ss:2Dexcasym}
\[ \diagr{NE,W,S} \qquad \diagr{N,E,SW} \qquad \diagr{N,NE,E,S,SW,W} \qquad \diagr{NE,E,SW,W}\]
There are four models for which the orbit sum method fails to give an expression for the walk generating functions as the diagonals of rational functions (meaning the techniques of analytic combinatorics in several variables as described above cannot by directly used).  Luckily, these four models are algebraic and explicit minimal polynomials for the generating functions are known: the first was found by Mishna~\cite{ Mish09}, the next two by Bousquet-M{\'e}lou and Mishna~\cite{ BoMi10} and the final model---known as Gessel's walk---is treated in Bostan and Kauers~\cite{ BoKa10}.  It is effective to determine the asymptotics of a sequence from its generating function's minimal polynomial (under gentle technical conditions), so the asymptotics for these cases follow rigorously through univariate methods (see Chapter VII.7 of Flajolet and Sedgewick~\cite{ FlSe09} and~\cite{ Mish09,BoKa10}). In fact, the multivariate generating function enumerating walks by length and endpoint is algebraic for each model, a stronger property. 

\subsection{The Remaining Three Models}
\label{ss:2remasym}
\[ \diagr{N,W,SE} \qquad \diagr{NW,SE,N,S,E,W} \qquad \diagr{E,SE,W,NW} \]
There are three models not covered by the above cases.  They do not exhibit any symmetries, but the orbit sum method still gives a rational diagonal expression. Asymptotics of these models were previous given by Bousquet-M{\'e}lou and Mishna~\cite{ BoMi10}: although their multivariate generating functions enumerating walks by length and endpoint are transcendental, the first two models have univariate generating functions $F(t)$ counting walks ending anywhere in the quadrant which are algebraic. The final model does not have an algebraic univariate generating function, but Bousquet-M{\'e}lou and Mishna determined asymptotics by exploiting the fact that the coefficients of its multivariate generating function are \emph{Gosper summable}; see~\cite[Proposition 11]{ BoMi10} for details.

\subsubsection{Case 1: \texorpdfstring{$\mS = \{N,W,SE\}$}{S={N,W,SE}}} 
Applying the kernel method, the counting generating function satisfies
\[ F(t) = \Delta \left( \frac{(x^2-y)(1-\ox\oy)(x-y^2)}{(1-x)(1-y)(1-xyt(\oy+y\ox+x))} \right). \]
Furthermore, the kernel method implies (see, for example, Bousquet-M{\'e}lou and Mishna~\cite{BoMi10} or Bostan et al.~\cite{BostanChyzakHoeijKauersPech2017}) that  
\[ F(t) = \frac{1-t-\sqrt{1-2t-3t^2}}{2t^2} \]
is algebraic, and asymptotics can be determined directly from this specification. Alternatively, one can perform a (more difficult) multivariate singularity analysis on this rational function. Although the rational function has smooth and transverse multiple points that are minimal and critical, we cannot directly apply the asymptotic methods discussed above at the point $(1,1,1/3)$---which turns out to be a contributing singularity---as the gradient of $H_1$ at that point is parallel to the gradient of the function $\phi(x,y,t) = \log (xyt)$ occurring in the Fourier-Laplace integral that must be analyzed to determine asymptotics. The theory in this case, where three factors in the denominator intersect, still needs to be fully developed. Here, one can write
\[  x^2-y = (x-1)(x+1)-(y-1)\]
to decompose the rational function as
\begin{align*}
\frac{(x^2-y)(1-\ox\oy)(x-y^2)}{(1-x)(1-y)(1-xyt(\oy+y\ox+x))} = &  - \frac{(1-\ox\oy)(x-y^2)(x+1)}{(1-y)(1-xyt(\oy+y\ox+x))} \\
& + \frac{(1-\ox\oy)(x-y^2)}{(1-x)(1-xyt(\oy+y\ox+x))},
\end{align*}
where each of the summands now contains only two factors in the denominator. After this simplification, a (still difficult) asymptotic analysis can be applied to determine the asymptotic contribution of each summand to the diagonal sequence. Using the explicit algebraic expression, or the multivariate approach, gives that the counting sequence for the number of walks on these steps satisfies
\[s_n = \frac{3^n}{n^{3/2}} \, \left( \frac{3\sqrt{3}}{2\sqrt{\pi}} + O\left(n^{-1}\right)\right). \]
It is interesting that the D-finite models which are harder to approach using multivariate analytic methods are precisely those which have algebraic generating functions; we do not see a deep reason why this should be true.

\subsubsection{Case 2: \texorpdfstring{$\mS =\{NW,SE,N,S,E,W\}$}{S={NW,SE,N,S,E,W}}} Applying the kernel method, we see that the counting generating function satisfies
\[ F(t) = \Delta \left( \frac{(x-y^2)(1-\ox\oy)(x^2-y)}{(1-x)(1-y)(1-txy(x+y+x\oy+y\ox+\ox+\oy))} \right). \]
Analogously to the last case, one can compute an algebraic expression
\[ F(t) = \frac{1-2t-\sqrt{1-4t-12t^2}}{8t^2} \]
for the generating function, which immediately gives asymptotics, or decompose the multivariate rational function into a sum of rational functions with simpler singular sets and perform a multivariate analysis. In either case, one obtains that the counting sequence for the number of walks on these steps satisfies
\[ s_n = \frac{6^n}{n^{3/2}} \, \left(\frac{3\sqrt{3}}{2\sqrt{\pi}} + O\left(n^{-1}\right)\right). \]

\subsubsection{Case 3: \texorpdfstring{$\mS = \{E,SE,W,NW\}$}{S={E,SE,W,NW}}} Applying the kernel method, we see that the counting generating function satisfies
\[ F(t) = \Delta \left( \frac{(x+1)(\ox^2-\oy)(x-y)(x+y)}{1-xyt(x+x\oy+y\ox+\ox)} \right). \]
This case turns out to be easy to analyze, since the denominator is smooth.  There are two points that satisfy the critical point equations: $\bp_1 = (1,1,1/4)$ and $\bp_2 = (-1,1,1/4)$, both of which are minimal and smooth.  As the numerator has a zero of order 2 at $\bp_1$ but order 3 at $\bp_2$, in fact only $\bp_1$ contributes to the dominant asymptotics.  The Sage package of Raichev---implementing Theorem~\ref{thm:multasm}---computes the contributions at both points and shows that the counting sequence for the number of walks on these steps satisfies
\[ s_n =  \frac{4^n}{n^2} \cdot \frac{8}{\pi} + O\left(\frac{4^n}{n^3}\right) \]
Weighted versions of this step set were studied in detail by Courtiel et al.~\cite{CourtielMelczerMishnaRaschel2017}.

\section{Further Considerations}
\label{sec:Conclusion}

We end with some additional remarks and generalizations.

\subsection{Weighted models}
Although our results hold for models whose steps have positive real weights, we have not yet given an example with positive weights not equal to one. We do so now.


\begin{example}
Consider the general 2D model with one symmetry defined by the following step set with non-negative real weights,

\begin{center}
\begin{tikzpicture}[scale=.5]
    \draw[->] (0,0) -- (-1,-1) node[left] {$\scriptstyle d$};
    \draw[->] (0,0) -- (-1,0) node[left] {$\scriptstyle a$};
    \draw[->] (0,0) -- (-1,1) node[left] {$\scriptstyle b$};
    \draw[->] (0,0) -- (0,-1) node[below] {$\scriptstyle e$};
    \draw[->] (0,0) -- (0,1) node[above] {$\scriptstyle c$};
    \draw[->] (0,0) -- (1,-1) node[right] {$\scriptstyle d$};
    \draw[->] (0,0) -- (1,0) node[right] {$\scriptstyle a$};
    \draw[->] (0,0) -- (1,1) node[right] {$\scriptstyle b$};
\end{tikzpicture}
\end{center}
\noindent
Then $s_n$ is asymptotic to
\[
\left(2(a+b+d)+c+e\right)^n n^{-1/2} \cdot \left[ \; \left(1 - \frac{2d+e}{2b+c}\right) \sqrt{\left(\frac{2(a+b+d)+c+e}{(a+b+d)\pi}\right)} \; \right] 
\]
when $2b+c > 2d+e > 0$ (positive drift), whereas for $0 < 2b+c < 2d+e$ (negative drift) and $a\neq 0$ it is asymptotic to 
\[
\left(2a+2\sqrt{(2b+c)(2d+e)}\right)^n n^{-2} \cdot C 
\]
with 
\[
C =  \left[\frac{(2a+2\sqrt{(2b+c)(2d+e)})^2}{2\pi \left(1 - \sqrt{\frac{2b+c}{2d+e}} \right)^2 \left((2b+c)(2d+e)\right)^{3/4} \sqrt{d\sqrt{\frac{2b+c}{2d+e}}+a+b\sqrt{\frac{2d+e}{2b+c}}}} \right].
\]
Asymptotics in the subcase when $0 < 2b+c < 2d+e$ (negative drift) and $a = 0$ similarly follows from Theorem~\ref{thm:NegAsm}, resulting in an unwieldy formula due to periodicity. Finally, when $2b+c=2d+e$ (zero drift) and $b=d$ and $c=e$ (highly symmetric) then $s_n$ is asymptotic to
\[ \left(2a+2c+4b\right)^n n^{-1} \cdot \left[ \;\frac{2a+2c+4b}{\pi \sqrt{(a+2b)(c+2b)}} \; \right]  . \]
The non-highly symmetric zero drift cases are outside the scope of our results. Conjecturally, they have dominant asymptotics which are a constant times $(2a+2b+2c+2d)^n n^{-1}$ for generic $a,b,c,d$, although there are values of the parameters for which this does not hold.
\end{example}

\subsection{Decidability of asymptotics}
\label{sec:computability} 
The techniques of analytic combinatorics in several variables are currently at the front line of research into computability questions in enumerative combinatorics. Given a univariate rational generating function, or an algebraic generating function encoded by its minimal polynomial and a sufficient number of initial terms, there are algorithms that take the function and return asymptotics of its power series coefficients at the origin. On the other hand, it is an open problem whether it is decidable to take a D-finite generating function encoded by an annihilating linear differential equation and initial conditions and determine asymptotics of its counting sequence. In slightly restricted settings (for instance, when the D-finite generating function has integer coefficients and positive radius of convergence) a careful singularity analysis allows one to determine a so-called \emph{asymptotic basis}: a finite collection of terms $\Delta_1,\dots,\Delta_d$ with asymptotic expansions of the form
\[ \Delta_j = \rho_j^n n^{\alpha_j} (\log n)^{\kappa_j} \left( C_0^{(j)} + \frac{C_1^{(j)}}{n} + \cdots \right) \]
that can be determined explicitly to any finite order, such that asymptotics of the coefficient sequence $c_n$ is an $\mathbb{R}-$linear combination of the $\Delta_j$,
\[ c_n \sim K_1 \, \Delta_1 + \cdots + K_r \, \Delta_r, \qquad K_j \in \mathbb{R}. \]
See Flajolet and Sedgewick~\cite[Sec VII. 9]{FlSe09} for details. 

In this way, decidability of asymptotics can be reduced to the determination of the \emph{connection coefficients} $K_j$. If, without loss of generality, asymptotics of $\Delta_1$ dominates asymptotics of the other $\Delta_j$ then $\Delta_1$ typically determines (up to a scaling multiple) asymptotics of $c_n$. However, if the constant $K_1$ is zero then $c_n$ can have drastically different asymptotic behaviour than $\Delta_1$. Determining the coefficients $K_j$ is known as the \emph{connection problem}.

Because the class of multivariate rational diagonals contains the class of algebraic functions, and is contained in the class of D-finite functions, the techniques of analytic combinatorics in several variables offer tools to investigate the connection problem (see Melczer~\cite{Melczer2017} for an in-depth look at this approach). For instance, Bostan et al.~\cite{BostanChyzakHoeijKauersPech2017} give annihilating differential equations for each lattice path generating function in Table~\ref{tab:guessed}, even representing them in terms of explicit hypergeometric functions; however, they were not able to prove all asymptotics in that table, because of the connection problem.  For instance, they show~\cite[Conjecture 2]{BostanChyzakHoeijKauersPech2017} that the number of walks with step set $\mS = \{(0,-1),(-1,1),(1,1)\}$ has dominant asymptotics of the form $\frac{\sqrt{3}}{2\sqrt{\pi}} 3^k k^{-1/2}$ if and only if the integral
\begin{align*} 
I := \int_0^{1/3} & \left\{  \frac{(1-3v)^{1/2}}{v^3(1+v^2)^{1/2}}\left[ 1 
+ (1-10v^3) \cdot {}_2F_1\left(\left. \genfrac{}{}{0pt}{}{3/4, 5/4}{1} \right| 64v^4 \right)  \right. \right. \\
& \hspace{1.1in} \left.\left. + \, 6v^3(3-8v+14v^2) \cdot {}_2F_1\left(\left. \genfrac{}{}{0pt}{}{5/4, 7/4}{2} \right| 64v^4 \right)\right] \right. \\
& \hspace{3.4in} \left. - \frac{2}{v^3} + \frac{4}{v^2} \right\} dv 
\end{align*}
has the value $I=1$ (see that paper for details on the notation used).  Using the multivariate singularity analysis discussed above, we are able to circumvent these difficulties, and resolve the connection problem for these lattice path models. As an indirect corollary of our asymptotic results, we thus determine the values of certain complicated integral expressions involving hypergeometric functions.

\subsection{Walks returning to boundaries}
The kernel method as presented here uses the multivariate generating function $F(\bz,t)$ tracking walk length and endpoint to derive a rational diagonal expression for the univariate generating function $F(\bone,t)$ counting the number of walks ending anywhere. Also of interest is the number of walks ending on one or more of the boundary hyperplanes in the first orthant; if $V \subset \{1,\dots,d\}$ then 
\[ F(\bz,t){\Big|}_{\substack{z_j = 0, j \in V \\ z_j = 1, j \notin V}} \]
counts the number of walks returning to the intersection of the boundary hyperplanes $\{z_j=0\}$ for $j \in V$. Lemma~\ref{lem:diagsubs} can easily be generalized to the following.

\begin{lemma}
Let $P(\bz,t) \in \mathbb{Q}[z_1,\oz_1,\dots,z_d,\oz_d][[t]] \subset \mR$.  Then 
\[ \left([\bz^\geq]P(\bz,t)\right) {\Big|}_{\substack{z_j = 0, j \in V \\ z_j = 1, j \notin V}} = \Delta \left(\frac{P\left(\oz_1,\dots,\oz_d,z_1\cdots z_d\cdot t\right)}{(1-z_1)\cdots(1-z_d)} \cdot \prod_{j \in V}(1-z_j) \, \right). \]
\end{lemma}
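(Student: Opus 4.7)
The plan is to follow the strategy of Lemma~\ref{lem:diagsubs} (the $V = \emptyset$ case), namely to expand both sides of the claimed identity as formal power series in $t$ with Laurent polynomial coefficients in $\bz$, and verify agreement coefficient-by-coefficient. First I would write $P(\bz,t) = \sum_{n \geq 0} t^n \sum_{\bi \in \mathbb{Z}^d} a_{\bi,n} \bz^{\bi}$, where for each fixed $n$ only finitely many $a_{\bi,n}$ are nonzero. Then the positive sub-series extraction followed by the specialization on the left-hand side yields
\[
\sum_{n \geq 0} t^n \sum_{\substack{\bi \in \mathbb{N}^d \\ i_j = 0 \text{ for all } j \in V}} a_{\bi,n},
\]
since setting $z_j = 0$ for $j \in V$ keeps only terms with $i_j = 0$, and setting $z_j = 1$ for $j \notin V$ sums over the remaining indices.

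Next I would simplify the argument of $\Delta$ on the right-hand side by cancelling the factors $1-z_j$ for $j \in V$, leaving
\[
\Delta\!\left(\frac{P(\oz_1, \ldots, \oz_d, z_1 \cdots z_d \cdot t)}{\prod_{j \notin V}(1 - z_j)}\right).
\]
Substituting the expansion of $P$ gives
\[
P(\oz_1, \ldots, \oz_d, z_1 \cdots z_d \cdot t) = \sum_{n \geq 0} t^n (z_1 \cdots z_d)^n \sum_{\bi} a_{\bi,n}\, \bz^{-\bi},
\]
and expanding each geometric series $(1-z_j)^{-1} = \sum_{m_j \geq 0} z_j^{m_j}$ for $j \notin V$ produces a series whose coefficient of $t^n$ is a well-defined element of $\mR$. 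I would then directly read off the coefficient of $z_1^n \cdots z_d^n$: for $j \in V$ the exponent of $z_j$ in each monomial is $n - i_j$, forcing $i_j = 0$; for $j \notin V$ the exponent is $n - i_j + m_j$, forcing $m_j = i_j$ and hence requiring $i_j \geq 0$. Summing over the remaining $\bi$ then matches the left-hand side exactly.

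The main (minor) obstacle is bookkeeping: one must check that all manipulations make formal sense in the iterated Laurent series ring $\mR$, and in particular that the product of the Laurent polynomial coefficient of $t^n$ in $P(\oz_1,\ldots,\oz_d, z_1 \cdots z_d \cdot t)$ with $\prod_{j \notin V}(1-z_j)^{-1}$ is an admissible element of $\mR$ on which $\Delta$ acts as the formal diagonal. Since the coefficient of each $t^n$ is a finite Laurent polynomial times a formal power series in the $z_j$ for $j \notin V$, the required diagonal extraction is well-defined term-wise and the identity follows. No deep new ideas are needed beyond those already used in Lemma~\ref{lem:diagsubs}; the extension to arbitrary $V \subseteq \{1, \ldots, d\}$ is purely a matter of tracking which geometric series are present.
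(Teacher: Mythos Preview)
Your proposal is correct and takes essentially the same approach as the paper, which for both Lemma~\ref{lem:diagsubs} and its generalization simply indicates that the identity follows from expanding the geometric series and the power series of $P$ on the right-hand side and matching coefficients. Your additional remark that the factors $(1-z_j)$ with $j\in V$ cancel, leaving only the geometric series in the variables $z_j$ with $j\notin V$, is precisely the observation needed to carry the argument of Lemma~\ref{lem:diagsubs} over to arbitrary $V$.
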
 

Thus, following the arguments above, if $Q(\bz,t) = G(\bz,t)/H(\bz,t)$ is the rational function given in Theorem~\ref{thm:diag2} such that $F(\bone,t)=(\Delta Q)(t)$ then
\[ F(\bz,t){\Big|}_{\substack{z_j = 0, j \in V \\ z_j = 1, j \notin V}} = \Delta \left( Q(\bz,t) \cdot \prod_{j \in V}(1-z_j) \,  \right). \]

This close link between the diagonal expressions for walks ending anywhere and walks ending on boundary hyperplanes allows us to reuse much of the work above to derive asymptotics for walks ending on boundary hyperplanes. In particular, if $V$ does not contain $d$ then the singular sets of both multivariate rational functions obtained are the same, so the contributing points calculated by Theorem~\ref{thm:contrib} are still contributing. Analysis of asymptotics is easy for any fixed model, but the additional zeros in the numerator of $Q(\bz,t) \cdot \prod_{j \in V}(1-z_j)$ at contributing points make explicit expressions for generic models harder to calculate. 

When $V$ contains $d$, then the factor of $1-z_d$ in the numerator of $Q(\bz,t)$ will cancel the new factor $1-z_d$ in the numerator. In the negative drift and zero drift cases this has no bearing on the contributing singularities, and hence on the  exponential growth of the number of walks returning to the hyperplane $\{z_d=0\}$. However in the positive drift case the contributing singularities will change and the exponential growth will be smaller for walks returning to the hyperplane $\{z_d=0\}$ than for general walks.

Using the Sage package of Raichev to compute asymptotic contributions, Table~\ref{tab:BoundaryAsm} gives asymptotics for the number of walks returning to one or both of the boundary axes on the 2D quadrant models analyzed above, where

{\footnotesize
\[
\delta_n = \begin{cases} 1&: n \equiv 0 \mod 2 \\ 0 &: \text{otherwise} \end{cases} \qquad
\sigma_n = \begin{cases} 1&: n \equiv 0 \mod 3 \\ 0 &: \text{otherwise} \end{cases} \qquad
\epsilon_n = \begin{cases} 1&: n \equiv 0 \mod 4 \\ 0 &: \text{otherwise} \end{cases}
\]
}

\noindent
and
{\footnotesize 
\[ 
\gamma_n = \begin{cases} 
448\sqrt{2}&: n \equiv 0 \mod 4 \\
640&: n \equiv 1 \mod 4 \\
416\sqrt{2}&: n \equiv 2 \mod 4 \\
512&: n \equiv 3 \mod 4 
\end{cases}
\]
}

\noindent
help account for periodicities that appear, and the algebraic constants $A,B,$ and $C$ are given by

{\footnotesize \[A = (156+41\sqrt{6})\sqrt{23-3\sqrt{6}}, \quad B = (583+138\sqrt{6})\sqrt{23-3\sqrt{6}}, \quad C = (4571+1856\sqrt{6})\sqrt{23-3\sqrt{6}}. \]}

\noindent
This completes, for the first time, a proof of conjectured asymptotics given by Bostan et al.~\cite{BostanChyzakHoeijKauersPech2017}. Note that the second and third columns show that periodicity can occur even with positive drift models (unlike the situation for walks ending anywhere analyzed in previous sections).
 
\begin{table}[ht!]
{\footnotesize
\centering
\begin{tabular}{ | c | c | c | c |}
  \hline
  $\mS$ & Return to $x$-axis & Return to $y$-axis & Return to origin \\ \hline
  &&& \\[-5pt]
  \diagr{N,S,E,W}  & $\frac{8}{\pi} \cdot \frac{4^n}{n^2}$ & 
  $\frac{8}{\pi} \cdot \frac{4^n}{n^2}$ &  
  $\delta_n \frac{32}{\pi} \cdot \frac{4^n}{n^3}$ \\[+2mm]
  \diagr{NE,SE,NW,SW}  & $\delta_n \frac{4}{\pi} \cdot \frac{4^n}{n^2}$ & 
  $\delta_n \frac{4}{\pi} \cdot \frac{4^n}{n^2}$ &  
  $\delta_n \frac{8}{\pi} \cdot \frac{4^n}{n^3}$ \\[+2mm]
  \diagr{N,S,NE,SE,NW,SW} &$\frac{3\sqrt{6}}{2\pi} \cdot \frac{6^n}{n^2}$ & 
  $\delta_n \frac{2\sqrt{6}}{\pi} \cdot \frac{6^n}{n^2}$ &  
  $\delta_n \frac{3\sqrt{6}}{\pi} \cdot \frac{6^n}{n^3}$ \\[+2mm]
  \diagr{N,S,E,W,NW,SW,SE,NE}  & $\frac{32}{9\pi} \cdot \frac{8^n}{n^2}$ & 
  $\frac{32}{9\pi} \cdot \frac{8^n}{n^2}$ &  
  $\frac{128}{27\pi} \cdot \frac{8^n}{n^3}$  \\[+2mm]
  \diagr{NE,NW,S}  & $\frac{3\sqrt{3}}{4\sqrt{\pi}}  \frac{3^n}{n^{3/2}}$ & 
  $\delta_n \frac{4\sqrt{2}}{\pi}  \frac{(2\sqrt{2})^n}{n^2}$ & 
  $\epsilon_n \frac{16\sqrt{2}}{\pi} \frac{(2\sqrt{2})^n}{n^3}$ \\[+2mm]
  \diagr{N,NW,NE,S}  & $\frac{8}{3\sqrt{\pi}} \frac{4^n}{n^{3/2}}$ & 
  $\delta_n \frac{4\sqrt{3}}{\pi}  \frac{(2\sqrt{3})^n}{n^2}$ & 
  $\delta_n \frac{12\sqrt{3}}{\pi}  \frac{(2\sqrt{3})^n}{n^3}$ \\[+2mm]
  \diagr{NE,NW,E,W,S} & $\frac{5\sqrt{10}}{16\sqrt{\pi}}  \frac{5^n}{n^{3/2}}$  & 
  $\frac{\sqrt{2}(1+\sqrt{2})^{3/2}}{\pi}  \frac{(2+2\sqrt{2})^n}{n^2}$ & 
  $\frac{2(1+\sqrt{2})^{3/2}}{\pi}  \frac{(2+2\sqrt{2})^n}{n^3}$ \\[+2mm]
  \diagr{N,NE,NW,SE,SW}  & $\frac{5\sqrt{10}}{24\sqrt{\pi}}  \frac{5^n}{n^{3/2}}$ & 
  $\delta_n \frac{4\sqrt{30}}{5\pi}  \frac{(2\sqrt{6})^n}{n^2}$ & 
  $\delta_n \frac{24\sqrt{30}}{25\pi}  \frac{(2\sqrt{6})^n}{n^3}$ \\[+2mm]
  \diagr{N,NW,NE,E,W,S} & $\frac{\sqrt{3}}{\sqrt{\pi}} \frac{6^n}{n^{3/2}}$ &
  $\frac{2\sqrt{3}(1+\sqrt{3})^{3/2}}{3\pi} \frac{(2+2\sqrt{3})^n}{n^2}$ & 
  $\frac{2(1+\sqrt{3})^{3/2}}{\pi} \frac{(2+2\sqrt{3})^n}{n^3}$ \\[+2mm]
  \diagr{N,E,W,NE,NW,SE,SW} & 
  $\frac{7\sqrt{21}}{54\sqrt{\pi}} \frac{7^n}{n^{3/2}}$ & 
  $ \frac{A}{285\pi} \frac{(2+2\sqrt{6})^n}{n^2}$ & 
  $\frac{2B}{1805\pi} \frac{(2+2\sqrt{6})^n}{n^3}$ \\[+2mm]
  \diagr{N,SE,SW}  & $\gamma_n \cdot \frac{1}{9\pi} \cdot \frac{(2\sqrt{2})^n}{n^3}$ & 
  $\delta_n \frac{4\sqrt{2}}{\pi}  \frac{(2\sqrt{2})^n}{n^2}$ & 
  $\epsilon_n \frac{16\sqrt{2}}{\pi} \frac{(2\sqrt{2})^n}{n^3}$ \\[+2mm]
  \diagr{S, SW, SE, N}  & $\left(\delta_n \frac{36\sqrt{3}}{\pi} + \delta_{n-1} \frac{54}{\pi}\right)\cdot \frac{(2\sqrt{3})^n}{n^3}$ & 
  $\delta_n \frac{4\sqrt{3}}{\pi}  \frac{(2\sqrt{3})^n}{n^2}$ & 
  $\delta_n \frac{12\sqrt{3}}{\pi}  \frac{(2\sqrt{3})^n}{n^3}$ \\[+2mm]
  \diagr{SE, SW, E, W, N} & $\frac{4(1+\sqrt{2})^{7/2}}{\pi} \cdot \frac{(2+2\sqrt{2})^n}{n^3}$  & 
  $\frac{\sqrt{2}(1+\sqrt{2})^{3/2}}{\pi}  \frac{(2+2\sqrt{2})^n}{n^2}$ & 
  $\frac{2(1+\sqrt{2})^{3/2}}{\pi}  \frac{(2+2\sqrt{2})^n}{n^3}$  \\[+2mm]
  \diagr{S, SE, SW, NE, NW}  & $\left( \delta_n \frac{72\sqrt{30}}{5\pi} + \delta_{n-1}\frac{864\sqrt{5}}{25\pi} \right) \cdot \frac{(2\sqrt{6})^n}{n^3}$ & 
  $\delta_n \frac{4\sqrt{30}}{5\pi}  \frac{(2\sqrt{6})^n}{n^2}$ & 
  $\delta_n \frac{24\sqrt{30}}{25\pi}  \frac{(2\sqrt{6})^n}{n^3}$ \\[+2mm]
  \diagr{S, SW, SE, E, W, N} & $\frac{3(1+\sqrt{3})^{7/2}}{2\pi} \cdot \frac{(2+2\sqrt{3})^n}{n^3}$ &
  $\frac{2\sqrt{3}(1+\sqrt{3})^{3/2}}{3\pi} \frac{(2+2\sqrt{3})^n}{n^2}$ & 
  $\frac{2(1+\sqrt{3})^{3/2}}{\pi} \frac{(2+2\sqrt{3})^n}{n^3}$  \\[+2mm]
  \diagr{NE, NW, E, W, SE, SW, S} & 
  $\frac{6C}{1805\pi} \cdot \frac{(2+2\sqrt{6})^n}{n^3}$ & 
  $\frac{A}{285\pi} \frac{(2+2\sqrt{6})^n}{n^2}$ &
  $\frac{2B}{1805\pi} \frac{(2+2\sqrt{6})^n}{n^3}$ \\[+2mm]
  \diagr{N,W,SE}   & $\frac{27\sqrt{3}}{8\sqrt{\pi}} \cdot \frac{3^n}{n^{5/2}}$ & 
  $\frac{27\sqrt{3}}{8\sqrt{\pi}} \cdot \frac{3^n}{n^{5/2}}$ & 
  $\sigma_n \frac{81\sqrt{3}}{\pi} \cdot \frac{3^n}{n^4}$ \\[+2mm]
  \diagr{E,SE,W,NW}  & $\delta_n \frac{32}{\pi} \cdot \frac{4^n}{n^3}$ & 
  $\frac{32}{\pi} \cdot \frac{4^n}{n^3}$ & 
  $\delta_n \frac{768}{\pi} \cdot \frac{4^n}{n^5}$ \\[+2mm]
  \diagr{NW,SE,N,S,E,W} & $\frac{27\sqrt{3}}{8\sqrt{\pi}} \cdot \frac{6^n}{n^{5/2}}$ & 
  $\frac{27\sqrt{3}}{8\sqrt{\pi}} \cdot \frac{6^n}{n^{5/2}}$ & 
  $\frac{27\sqrt{3}}{\pi} \cdot \frac{6^n}{n^4}$ \\[+2mm]
  \hline
\end{tabular}
\caption{Asymptotics of quadrant walks returning to the $x$-axis, the $y$-axis, and the origin, respectively.}
\label{tab:BoundaryAsm}
}
\end{table}

\subsection{Zero drift models}
\label{sec:zerodrift}
In the non-highly symmetric zero drift case, when $A(\bone) = B(\bone)$ but $A(\bzht{d}) \neq B(\bzht{d})$,  Theorem~\ref{thm:contrib} implies that we can have contributions from the point $\bp:=\bp_1=\bp_2$ on the stratum $\mV_1 \cap \mV_3$, possibly with other points lying on locally smooth parts of $\mV_1$.  Note that the numerator vanishes to at least first order at every critical point, and that this case cannot occur for unweighted steps in dimension $2$ (where every zero drift model is highly symmetric).

Since $\bp$ is on the intersection of $\mV_1$ and $\mV_3$, and the numerator vanishes, we expect it to give an asymptotic contribution of $C\cdot |\mS|^n \cdot n^{-d/2-1/2}$, while the other (locally smooth) contributing points have a contribution of $O\left(|\mS|^n \cdot n^{-d/2-1}\right)$.  Thus, if we can determine a second order contribution at $\bp$ and show that it does not vanish, we will have found dominant asymptotics.

Asymptotic contributions of minimal critical points are determined by analyzing integrals of the form
\[ \int_{[-1,1]^r} A(\bt) e^{-n \phi(\bt)} \, d\bt \]
where $r \in \mathbb{N}$, and $A$ and $\phi$ analytic functions from $[-1,1]^r$ to $\mathbb{C}$ (see~\cite{PeWi13} for details). When the gradient of $\phi$ vanishes in the interior of $[-1,1]^r$, and other technical conditions on $A$ and $\phi$ (which are satisfied here) hold, the asymptotic formulas in Theorem~\ref{thm:multasm} follow. Unfortunately, in the non-highly symmetric zero drift case the gradient of $\phi$ vanishes on the boundary of the domain of integration, meaning the relevant asymptotic constants are not the same as those in Theorem~\ref{thm:multasm}. In fact, general asymptotics for such a situation have not yet been worked out in the context of ACSV.

Furthermore, while non-vanishing of the second order contribution at $\bp$ happens generically, there are models where vanishing does occur and finding dominant asymptotics requires a detailed analysis at several contributing singularities. Because of these added difficulties, including a need to extend the underlying analytic theory, a more nuanced study of the zero drift models will be the subject of future work.

\subsection{Connecting analytic and combinatorial behaviour}
As we have seen, the kernel method shows how nice combinatorial properties of a step set (like symmetry over axes) correspond to nice analytic properties of a multivariate rational function (like a singular set defined as the union of a small number of smooth manifolds) encoding the corresponding generating function. Furthermore, it is possible to turn this around: because diagonal sequences of multivariate rational functions with `simple' geometry at contributing singularities can only capture a restricted set of asymptotic behaviour, certain step sets whose asymptotics are sufficiently complicated cannot have their generating functions encoded as the diagonals of `nice' rational functions. 

The connection between analytic and combinatorial behaviour also helps explain patterns in asymptotics. For instance, it was previously observed that the exponential growth of 2D quadrant walks ending anywhere and the exponential growth of walks ending at the origin was the same for negative drift models but different for positive drift models. The strong connection between the diagonal representations of the corresponding generating functions explains why this is the case.

We believe that the tools of analytic combinatorics in several variables have much to offer the immensely popular area of lattice path enumeration, and hope that others will pick up and utilize the tools discussed here.

\section{Acknowledgements}
The authors thank Mireille Bousquet-M{\'e}lou for her generous contribution of Theorem~\ref{thm:diag2} to this article, which simplified the presentation substantially, and the anonymous referees for their insightful suggestions and comments.

\bibliographystyle{plain}
\bibliography{bibl}

\appendix
\section{Calculus Computations}
\label{appendixA} 

\subsection*{General Results}
Here we collect the results and proofs that are necessary for determining asymptotics but theoretically uninteresting. Our first two results will be useful for calculating derivatives.

\begin{lemma}
\label{lem:ABsimp}
Let $P, Q$ be smooth functions from $I \subset \mathbb{R}^d$ to $\mathbb{C}$, and suppose that $\bzer$ lies in the interior of $I$ and $Q(\bzer) \neq 0$. Let $\partial:=\partial_k$ be a partial derivative operator such that $(\partial P)(\bzer) = 0 = (\partial Q)(\bzer)$. 

Then $\partial(P/Q)(\bzer) = 0$ and
$$
\partial^2(P/Q)(\bzer) = \frac{Q(\bzer)\partial^2P(\bzer) - P(\bzer)\partial^2Q(\bzer)}{Q(\bzer)^2}.
$$
When 
\begin{align*}
P(\bt) &= \left(e^{i\theta_k} + e^{-i\theta_k}\right) P_k(\btht{k}) + R_k(\btht{k})\\
Q(\bt) &= \left(e^{i\theta_k} + e^{-i\theta_k}\right) Q_k(\btht{k}) + R'_k(\btht{k})
\end{align*}
then $(\partial P)(\bzer) = 0 = (\partial Q)(\bzer)$ and
$$
\partial^2(P/Q)(\bzer) = \frac{-2P_k(\bzer)Q(\bzer)+2P(\bzer)Q_k(\bzer)}{Q(\bzer)^2}.
$$
\end{lemma}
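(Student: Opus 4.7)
The plan is to establish both assertions by direct calculation using the quotient rule, exploiting the vanishing of the first partial derivatives at the origin to collapse most of the resulting terms.

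First I would prove the general claim by computing $\partial(P/Q) = (Q\,\partial P - P\,\partial Q)/Q^2$. Evaluating at $\bzer$, the hypothesis $(\partial P)(\bzer) = (\partial Q)(\bzer) = 0$ immediately gives $\partial(P/Q)(\bzer) = 0$. Differentiating again yields
\[
\partial^2(P/Q) = \frac{Q\,\partial(Q\,\partial P - P\,\partial Q) - 2(Q\,\partial P - P\,\partial Q)\,\partial Q}{Q^3}.
\]
At $\bzer$ the second term in the numerator vanishes (both $\partial P(\bzer)$ and $\partial Q(\bzer)$ are zero). Expanding $\partial(Q\,\partial P - P\,\partial Q)$ via the product rule produces four terms, two of which are products of first-order derivatives and hence vanish at $\bzer$; the two remaining terms simplify to $Q(\bzer)\partial^2 P(\bzer) - P(\bzer)\partial^2 Q(\bzer)$. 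Dividing by $Q(\bzer)^3$ and cancelling a factor of $Q(\bzer)$ gives the stated formula.

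For the specific structural case, observe that since $P_k$ and $R_k$ do not depend on $\theta_k$, direct differentiation of $P(\bt) = (e^{i\theta_k} + e^{-i\theta_k})P_k(\btht{k}) + R_k(\btht{k})$ yields $\partial_k P(\bt) = i(e^{i\theta_k} - e^{-i\theta_k})P_k(\btht{k})$ and $\partial_k^2 P(\bt) = -(e^{i\theta_k} + e^{-i\theta_k})P_k(\btht{k})$. Evaluating at $\bzer$ (where $e^{i\theta_k} = e^{-i\theta_k} = 1$) gives $\partial_k P(\bzer) = 0$ and $\partial_k^2 P(\bzer) = -2P_k(\bzer)$, and the analogous computation for $Q$ produces $\partial_k Q(\bzer) = 0$ and $\partial_k^2 Q(\bzer) = -2Q_k(\bzer)$. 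Substituting these into the general second-derivative formula just proved yields
\[
\partial^2(P/Q)(\bzer) = \frac{Q(\bzer)(-2P_k(\bzer)) - P(\bzer)(-2Q_k(\bzer))}{Q(\bzer)^2} = \frac{-2P_k(\bzer)Q(\bzer) + 2P(\bzer)Q_k(\bzer)}{Q(\bzer)^2},
\]
which is the stated identity.

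This is a purely mechanical computation with no substantial obstacle; the only mild care needed is bookkeeping the product rule in the second derivative and confirming that all mixed first-order terms vanish at $\bzer$. The structural form of $P$ and $Q$ in the second part ensures the hypotheses of the first part automatically hold, so no additional verification is required.
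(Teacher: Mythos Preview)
Your proof is correct and follows essentially the same approach as the paper: expand the second derivative via the quotient rule, use the vanishing first derivatives to simplify, and then substitute the explicit values $\partial^2 P(\bzer)=-2P_k(\bzer)$ and $\partial^2 Q(\bzer)=-2Q_k(\bzer)$ obtained from the structural form. The only minor remark is that the two ``mixed'' terms $(\partial Q)(\partial P)$ and $-(\partial P)(\partial Q)$ in $\partial N$ actually cancel identically, not merely at $\bzer$, but this does not affect the argument.
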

\begin{proof}
The first assertion follows from expanding the second derivative using the quotient rule, and applying the obvious simplification. The second follows from the first by direct substitution, since $(\partial^2 P)(\bzer)$ simplifies to $-2P_k(\bzer)$ and similarly for $Q$.
\end{proof}

\begin{lemma} \label{lem:partialS}
For a point $\bp \in \mathbb{C}^d$, let 
$$
\tilde{S}(\bt) :=  \tilde{\overline{S}}(\bt) = S(p_1e^{i\theta_1}, \dots, \overline{p_d}e^{-i\theta_d}).
$$
Then if $\bp_{\hat{d}} \in \{\pm1\}^{d-1}$ and $p_d = \pm \sqrt{B(\bp_{\hat{d}})/A(\bp_{\hat{d}})}$, we have for $1\leq j\leq k \leq d$:
$$
\partial_j\tilde{S}(\bzer) = 0
$$
and 
\[ \partial_j\partial_k\tilde{S}(\bzer) = \begin{cases}
       0 & :  \qquad \text{if $j \neq k$;} \\
       -2 B_d(\bp_{\hat{d}})/p_d &: \qquad \text{if $j = k = d$;}  \\
       -2 p_j B_j(\bp_{\hat{j}}) & :  \qquad \text{if $j=k < d$}.
   \end{cases}
   \]
Furthermore, if $j < d$ then
$$
\partial_d\partial_j^2\tilde{S}(\bzer) = -2i p_j \left(p_d \tilde{A}'_j(\bzer) - p_d^{-1} \tilde{B}'_j(\bzer) \right).
$$
If $p_d' = i \, p_d$ is the imaginary number corresponding to $p_d$, and $S(\bone,1) = S(\bp_{\hat{d}},p_d')$, then the values of all partial derivatives above equal the values of the derivatives calculated for $(\bp_{\hat{d}},p_d')$, potentially up to sign.
\end{lemma}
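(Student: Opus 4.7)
The approach is a straightforward chain rule computation that exploits the symmetry properties of $\oS$ encoded in the decompositions $\oS(\bz) = (z_k + \oz_k)B_k(\bzht{k}) + Q_k(\bzht{k})$ for $k<d$ (Equation~\eqref{eqn:Bj}), the expression $\oS(\bz) = \oz_d B(\bzht{d}) + Q(\bzht{d}) + z_d A(\bzht{d})$, and the decompositions in Definition~\ref{def:Bpj}. Since $\tilde S(\bt) = \oS(E_\bp(\bt))$, the chain rule gives
\[
\partial_k\tilde S(\bt) = i\, p_k\, e^{i\theta_k}\cdot\frac{\partial \oS}{\partial z_k}(E_\bp(\bt)),
\]
and for $j\neq k$ a second application yields $\partial_j\partial_k\tilde S(\bt) = -p_jp_k e^{i(\theta_j+\theta_k)}\frac{\partial^2 \oS}{\partial z_j\partial z_k}(E_\bp(\bt))$; iterating gives the third derivative.

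First I would check that the first partials vanish at $\bzer$. For $k<d$ we have $\partial\oS/\partial z_k = (1-\oz_k^2)B_k(\bzht{k})$, which is zero at $z_k = p_k = \pm 1$, while for $k=d$ we have $\partial\oS/\partial z_d = -\oz_d^2 B(\bzht{d}) + A(\bzht{d})$, which vanishes at $(\bp_{\hat d}, p_d)$ because $p_d^2 = B(\bp_{\hat d})/A(\bp_{\hat d})$ (using that $A$ and $B$ are fixed by $z_j \mapsto \oz_j$ so $A(\bp_{\hat d}) = A(\bone)$ and similarly for $B$). Next, for the mixed second partials ($j\neq k$) the point is that $B_k$ and $Q_k$ are symmetric in $z_j$ when $j<d$, $j\neq k$ (because $\oS$ is, and the decomposition is unique), so differentiating in $z_j$ produces a factor $(1-\oz_j^2)$ that vanishes at $p_j=\pm 1$; the case $k=d$ is analogous using that $A(\bzht d)$ and $B(\bzht d)$ are symmetric in each $z_j$, $j<d$.

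For the pure second partials, since the first derivatives vanish at $\bzer$ we get
\[
\partial_k^2\tilde S(\bzer) = -p_k^2\,\frac{\partial^2\oS}{\partial z_k^2}(\bp).
\]
For $k<d$ one computes $\partial^2\oS/\partial z_k^2 = 2 z_k^{-3} B_k(\bzht k)$; evaluated at $p_k=\pm 1$ this equals $2p_kB_k(\bp_{\hat k})$, yielding $-2p_kB_k(\bp_{\hat k})$ (using $p_k^3 = p_k$). For $k=d$ one has $\partial^2\oS/\partial z_d^2 = 2z_d^{-3}B(\bzht d)$; evaluation at $\bp$ together with $p_d^2 = B(\bp_{\hat d})/A(\bp_{\hat d})$ gives $2p_d^{-3}B(\bp_{\hat d})$, hence $\partial_d^2\tilde S(\bzer) = -2B(\bp_{\hat d})/p_d$. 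The third derivative $\partial_d\partial_j^2\tilde S(\bzer)$ for $j<d$ is obtained by applying $\partial_d$ to the expression for $\partial_j^2\tilde S(\bt)$; the term coming from the already-vanishing $\partial\oS/\partial z_j$ contributes $-ip_jp_d\,\partial^2\oS/(\partial z_j\partial z_d)(\bp)$, which is zero by the mixed-partial calculation above, leaving
\[
\partial_d\partial_j^2\tilde S(\bzer) = -ip_j^2 p_d\cdot\frac{\partial^3\oS}{\partial z_j^2\,\partial z_d}(\bp).
\]
Using $\partial\oS/\partial z_d = -\oz_d^2 B+A$ and the expansions $A=(z_j+\oz_j)A'_j+A''_j$, $B=(z_j+\oz_j)B'_j+B''_j$, one gets $\partial^3\oS/(\partial z_j^2\partial z_d) = 2z_j^{-3}(A'_j - \oz_d^2 B'_j)$; evaluation at $\bp$ and simplification with $p_j^3=p_j$ gives exactly $-2ip_j(p_d A'_j(\bp_{\hat j}) - p_d^{-1}B'_j(\bp_{\hat j}))$.

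For the final statement, replacing $p_d$ by $p_d' = ip_d$ leaves $|p_d'|^2 = p_d^2 = B(\bp_{\hat d})/A(\bp_{\hat d})$ and so $\partial\oS/\partial z_d$ still vanishes at $(\bp_{\hat d}, p_d')$; rerunning the above chain-rule calculations produces the same formulas with each occurrence of $p_d$ replaced by $ip_d$, which up to sign matches the asserted values (the sign ambiguity reflects the factor of $i$ in powers of $p_d$). The only potential obstacle in the whole argument is the bookkeeping needed to propagate the axis symmetries through the decompositions $\oS=(z_k+\oz_k)B_k+Q_k$ and $B=(z_j+\oz_j)B'_j+B''_j$, but once these are invoked consistently every computation reduces to elementary algebra, and the auxiliary Lemma~\ref{lem:ABsimp} covers the generic simplifications.
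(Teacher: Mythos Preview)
Your main argument is correct and follows essentially the same strategy as the paper: both proofs exploit the symmetry decompositions $\oS=(z_k+\oz_k)B_k+Q_k$ and $\oS=\oz_dB+Q+z_dA$ to see that the relevant derivatives factor through $(1-\oz_k^2)$ or $A-\oz_d^2B$, which vanish at the stated $\bp$. The only stylistic difference is that the paper differentiates $\tilde S$ directly in the $\theta$-variables (writing $\tilde S=(p_je^{i\theta_j}+p_j^{-1}e^{-i\theta_j})\tilde B_j+\tilde Q_j$ and iterating), whereas you pass through the chain rule to compute $\partial_{z_k}\oS$, $\partial_{z_k}^2\oS$, etc.\ first. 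Either route gives the same formulas with the same amount of work.

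There is, however, a genuine slip in your treatment of the final statement about $p_d'=ip_d$. You write that ``$|p_d'|^2=p_d^2=B(\bp_{\hat d})/A(\bp_{\hat d})$ and so $\partial\oS/\partial z_d$ still vanishes at $(\bp_{\hat d},p_d')$''. But the vanishing of $\partial_{z_d}\oS=A-\oz_d^2B$ requires $z_d^2=B/A$, not $|z_d|^2=B/A$; at $z_d=p_d'$ one has $(p_d')^2=-p_d^2=-B/A$, so in fact $A-(p_d')^{-2}B=2A(\bp_{\hat d})\neq 0$. Thus your justification for the $p_d'$ case does not go through. The paper handles this differently: it invokes the extra hypothesis $S(\bone,1)=S(\bp_{\hat d},p_d')$ to conclude that the two monomials $p_d'A(\bp_{\hat d})$ and $(p_d')^{-1}B(\bp_{\hat d})$ in $\oS$ share the same argument, and then reruns the $\theta$-variable differentiation to obtain the same magnitudes (possibly with sign flips). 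You should replace your $|p_d'|^2$ argument with this observation, or with an equivalent direct computation that actually uses the hypothesis $S(\bone,1)=S(\bp_{\hat d},p_d')$.
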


\begin{proof}
For $j<d$, applying $\partial_j$ to 
$$
\tilde{S}(\bt) = (p_je^{i\theta_j}+p_j^{-1}e^{-i\theta_j})\tilde{B_j}(\bt) + \tilde{Q_j}(\bt)
$$
yields
$$
\partial_j \tilde{S}(\bt) = (ip_je^{i\theta_j}-ip_j^{-1}e^{-i\theta_j})\tilde{B_j}(\bt).
$$
Note that from this point, applying any $\partial_k$ with $k\neq j$ will not change the factor $ip_je^{i\theta_j}-ip_j^{-1}e^{-i\theta_j}$. Also, evaluating at zero 
gives $i(p_j-p_j^{-1}) = 0$.

Repeating this with higher powers of $\partial_j$ gives a formula that is periodic in the exponent, with period $4$. In particular when we evaluate at $\bzer$ we obtain
$$
\partial_j^n \tilde{S}(\bzer) = 
\begin{cases}
(-1)^{n/2} 2p_j \tilde{B_j}(\bzer) & \qquad \text{if $n$ is even} \\
0 & \qquad\text{if $n$ is odd}.
\end{cases}
$$
A similar computation with $j=d$ yields 
$$
\partial_d^n \tilde{S}(\bzer) = 
\begin{cases}
(-1)^{n/2} 2 \tilde{B}(\bzer)/p_d & \qquad \text{if $n$ is even} \\
0 & \qquad\text{if $n$ is odd}.
\end{cases}
$$
Finally, consider the third order derivative $\partial_d\partial_j^2 \tilde{S}$. Writing 
$$
\tilde{S}(\bt) = p_de^{i\theta_d}A(\bt) + Q + p_d^{-1} e^{-i\theta_d} B(\bt)
$$
and differentiating using the formulae in Definition~\ref{def:Bpj} yields
the stated result.

The statement about $p_d'$ follows from the same considerations, using the fact that if $S(\bone,1) = S(\bp_{\hat{d}},p_d')$ then $B(\bp_{\hat{d}},p_d')/p_d'$ and $A(\bp_{\hat{d}},p_d') \, p_d'$ have the same argument.
\end{proof}

\subsection*{Negative Drift Model Calculations}

We now show that the quantities appearing in Theorem~\ref{thm:multasm} simplify for us. Since in our situation of interest we always have $L_0(\tilde{u},\tilde{g}) = \tilde{u}(\bzer) = 0$, we begin by considering the term corresponding to $k=1$ in \eqref{eqn:Psi}. For possible independent interest we show that some simplification is possible even in the general case.

\begin{lemma}
\label{lem:horm_simp}
In the general smooth case 
\[ L_1(\tilde{u}, \underline{\tilde{g}}) = -\frac{1}{2}\left( \mathcal{H}(\tilde{u})(\bzer) + \frac{\mathcal{H}^2(\tilde{u}\underline{\tilde{g}})(\bzer)}{4} + \frac{\tilde{u}\mathcal{H}^3(\underline{\tilde{g}}^2)(\bzer)}{48} \right). 
\] 
If $\tilde{u}$ vanishes to order at least $1$ at $\bzer$, then
\[ L_1(\tilde{u}, \underline{\tilde{g}})  = -\frac{1}{2}\left( \mathcal{H}(\tilde{u})(\bzer) + \frac{\mathcal{H}^2(\tilde{u}\underline{\tilde{g}})(\bzer)}{4} \right)
\] 
and only terms involving third partial derivatives of $\tilde{g}$ contribute to the $\mathcal{H}^2$ term.

If $\tilde{u}$ vanishes to order at least $2$ at $\bzer$ then
\[ L_1(\tilde{u}, \underline{\tilde{g}})  = -\frac{1}{2} \mathcal{H}(\tilde{u})(\bzer).
\] 

If $\tilde{u}$ vanishes to order at least $3$ at $\bzer$ then 
\[  L_1(\tilde{u}, \underline{\tilde{g}})  = 0.
\] 
\end{lemma}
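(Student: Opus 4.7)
The plan is to start by substituting $k=1$ into the definition of $L_k$ to obtain
\[ L_1(\tilde{u},\tilde{g}) = -\tfrac{1}{2}\mathcal{H}(\tilde{u})(\bzer) - \tfrac{1}{8}\mathcal{H}^2(\tilde{u}\underline{\tilde{g}})(\bzer) - \tfrac{1}{96}\mathcal{H}^3(\tilde{u}\underline{\tilde{g}}^2)(\bzer), \]
and then show that each of the four claimed simplifications reduces to a bookkeeping argument comparing the order of vanishing of the factors $\tilde{u}$ and $\underline{\tilde{g}}^l$ against the differential order of the operator $\mathcal{H}^{1+l}$.

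The central input is that $\underline{\tilde{g}}$ vanishes to order at least $3$ at $\bzer$. This follows because $\bzer$ is the image of a critical point of the Fourier--Laplace integrand under the change of variables in Definition~\ref{def:E}, so $\tilde{g}(\bzer) = 0$ and $\nabla \tilde{g}(\bzer) = 0$ by the construction of $\tilde{g}$ in Theorem~\ref{thm:multasm}, and the degree-two Taylor polynomial is explicitly subtracted off in the definition of $\underline{\tilde{g}}$. Consequently $\underline{\tilde{g}}^l$ vanishes to order at least $3l$ at $\bzer$. Since $\mathcal{H}$ is a constant-coefficient linear differential operator of order $2$, the generalized Leibniz rule expresses $\mathcal{H}^m(fg)(\bzer)$ as a linear combination of products $(\partial^\alpha f)(\bzer)\,(\partial^\beta g)(\bzer)$ with $|\alpha|+|\beta|=2m$, and a summand can contribute only when the number of derivatives landing on each factor is at least that factor's order of vanishing at $\bzer$.

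Applying this count to each piece of $L_1$ yields all four assertions. For the general formula, the last term $\mathcal{H}^3(\tilde{u}\underline{\tilde{g}}^2)(\bzer)$ has total order $6$, while $\underline{\tilde{g}}^2$ vanishes to order at least $6$; therefore every derivative must fall on $\underline{\tilde{g}}^2$, producing exactly the factored form $\tilde{u}(\bzer)\,\mathcal{H}^3(\underline{\tilde{g}}^2)(\bzer)$. If $\tilde{u}(\bzer)=0$, this third term vanishes; in the surviving $\mathcal{H}^2(\tilde{u}\underline{\tilde{g}})(\bzer)$, the total order is $4$ and $\underline{\tilde{g}}$ needs at least $3$ derivatives, leaving at most $1$ for $\tilde{u}$, so only third-order partials of $\underline{\tilde{g}}$ occur (and these agree with those of $\tilde{g}$, since the subtracted quadratic has vanishing third derivatives). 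If $\tilde{u}$ vanishes to order at least $2$, the same tally forces $\geq 2$ derivatives on $\tilde{u}$ and $\geq 3$ on $\underline{\tilde{g}}$, totaling at least $5>4$, so $\mathcal{H}^2(\tilde{u}\underline{\tilde{g}})(\bzer)=0$ and only $-\tfrac{1}{2}\mathcal{H}(\tilde{u})(\bzer)$ survives. Finally, if $\tilde{u}$ vanishes to order at least $3$, then $\mathcal{H}(\tilde{u})(\bzer)=0$ since $\mathcal{H}$ is only second-order, so $L_1=0$.

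The main obstacle is verifying the order-$3$ vanishing of $\underline{\tilde{g}}$; once this is in hand, the rest is pure multinomial counting. Confirming this vanishing requires carefully tracing through the critical point condition in the setting of Theorem~\ref{thm:multasm}, in particular that the phase function $\tilde{g}$ constructed from the parametrization $t=h(\bz)$ satisfies $\nabla\tilde{g}(\bzer)=0$ as a consequence of the smooth critical-point equations on $\mV_1$. Beyond this, no further analytic input is needed: everything else is the generalized Leibniz rule together with an inequality on multi-index lengths.
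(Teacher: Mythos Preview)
Your proposal is correct and follows essentially the same approach as the paper: both arguments rest on the observation that $\underline{\tilde{g}}$ vanishes to order at least $3$ at $\bzer$ (the paper says this holds ``by construction,'' while you spell out that it uses the critical-point condition $\nabla\tilde{g}(\bzer)=0$), and then both apply a Leibniz-rule derivative count to each of the three terms in $L_1$. Your write-up is slightly more explicit about the multi-index bookkeeping, but there is no substantive difference in strategy.
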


\begin{proof}
First note that any partial derivative of order at most $2$ of $\underline{\tilde{g}}$ is zero when evaluated at $\bzer$, by construction. Furthermore all derivatives of $\underline{\tilde{g}}$ of degree more than $2$ yield the same result when evaluated at $\bzer$ as the corresponding derivative of $\tilde{g}$, since the difference between the two functions is quadratic. Thus in each nonzero term in an expansion of $L_1$ we may replace $\underline{\tilde{g}}$ by $\tilde{g}$.

Since $\underline{\tilde{g}}$ vanishes to order $3$ at $\bzer$, the term involving $\mathcal{H}^3$ simplifies substantially, since in order to obtain a nonzero term all the $6$th partial derivatives must be applied to $\underline{\tilde{g}}^2$ and so 
$\mathcal{H}^3(\tilde{u}\underline{\tilde{g}}^2)$ simplifies to $\tilde{u}\mathcal{H}^3(\underline{\tilde{g}}^2)$.

Similarly, $\mathcal{H}^2(\tilde{u}\underline{\tilde{g}})$ simplifies, since each 4th order partial derivative, when applied to the product $\tilde{u}\underline{\tilde{g}}$ and then evaluated at $\bzer$, only yields a nonzero result when at least $3$ of the derivations are applied to $\tilde{g}$. If $\tilde{u}$ vanishes to order $2$ then even these terms are zero. If $\tilde{u}$ vanishes to order $1$ then it is exactly the 3rd partials of $\tilde{g}$ that can contribute.
\end{proof}

This, combined with Theorem~\ref{thm:multasm}, directly gives the following.

\begin{proposition} \label{prop:NegAsm}
Let $\mS$ be a step set that is symmetric over all but one axis and takes a step forwards and backwards in each coordinate, and let $W$ be the set of contributing points determined by Theorem~\ref{thm:contrib}.  If $\mS$ has negative drift, then the number of walks of length $n$ that never leave the non-negative orthant satisfies
\begin{equation} s_n = \sum_{\bp \in W} \Psi^{(\bp)}_{n} \end{equation}
for
\[ \Psi^{(\bp)}_{n} = (p_1\cdots p_dp_t)^{-n} \left[ n^{-d/2-1} K_\bp C_\bp + O(n^{-1})\right], \]
where
\begin{align*}
K_\bp & = 2^{-d} \pi^{-d/2} \oS(\bp)^{d/2} \left(p_1B_1(\bp_{\hat{1}})\cdots p_{d-1}B_{d-1}(\bp_{\widehat{d-1}}) B_d(\bp_{\hat{d}}) / p_d \right)^{-1/2}  ,\\
C_{\bp} & = -\frac{1}{2}\left( \mathcal{H}(\tilde{u})(\bzer) + \frac{\mathcal{H}^2(\tilde{u}\underline{\tilde{g}})(\bzer)}{4} \right) \\
\intertext{for differential operator}
\mathcal{H} & = -\frac{\oS(\bp)}{2} \left( \frac{p_d}{B_d(\bp_{\hat{d}})}\partial_{d}^2 + \sum_{j < d} \frac{1}{p_jB_j(\bp_{\hat{j}})}\partial_{j}^2 \right) \\
\intertext{and} 
\tilde{u}(\bt) &= (1+p_1e^{i\theta_1}) \cdots (1+p_{d-1}e^{i\theta_{d-1}}) \left(1 - p_d^2e^{2i\theta_d} \frac{A\left( \bp_{\hat{d}}e^{i\bt_{\hat{d}}}\right)}{B\left(\bp_{\hat{d}}e^{i\bt_{\hat{d}}}\right)}\right) (1-p_de^{i\theta_d})^{-1}.\\
\end{align*}
\end{proposition}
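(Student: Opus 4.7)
The overall plan is to invoke Theorem~\ref{thm:multasm} at each contributing point enumerated in Theorem~\ref{thm:contrib}. By that theorem, every negative-drift contributing point $\bp$ lies on the smooth stratum $\mV_1$ alone: it is disjoint from $\mV_3$ because $|p_d| = \sqrt{B(\bone)/A(\bone)} \neq 1$. So Theorem~\ref{thm:multasm} applies without any multiple-point machinery, and it suffices to identify the prefactor and the term $L_1(\tilde u, \underline{\tilde g})$ at each $\bp$.

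The first observation is that $\tilde u(\bzer) = 0$ at every such $\bp$: by definition $p_d^2 = B(\bp_{\hat d})/A(\bp_{\hat d})$, so the factor $1 - p_d^2\, A(\bp_{\hat d})/B(\bp_{\hat d})$ in the numerator of $\tilde u$ vanishes, while the denominator $1 - p_d$ is nonzero (since $|p_d|\neq 1$ for all four choices of the fourth root of unity $\nu$ in Theorem~\ref{thm:contrib}). Hence $L_0(\tilde u,\tilde g) = 0$ and the leading asymptotic term comes from $L_1$.

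Next I would compute $\tilde g''(\bzer)$ via Lemma~\ref{lem:partialS}: it is diagonal, with entries $2p_jB_j(\bp_{\hat j})/\oS(\bp)$ for $j<d$ and $2B_d(\bp_{\hat d})/(p_d\oS(\bp))$ for $j=d$. Taking the determinant yields
\[
\det \tilde g''(\bzer) \;=\; \frac{2^d}{\oS(\bp)^d}\cdot \frac{B_d(\bp_{\hat d})}{p_d}\prod_{j<d}p_jB_j(\bp_{\hat j}),
\]
so the prefactor $(2\pi)^{-d/2}(\det\tilde g''(\bzer))^{-1/2}$ from Theorem~\ref{thm:multasm} collapses to exactly $K_\bp$. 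Because the Hessian is diagonal, so is its inverse, and $\mathcal H = -\sum_{a,b}(\tilde g''(\bzer)^{-1})_{ab}\partial_a\partial_b$ reduces to the pure second-derivative sum
\[
\mathcal H \;=\; -\frac{\oS(\bp)}{2}\Bigl(\frac{p_d}{B_d(\bp_{\hat d})}\partial_d^2 + \sum_{j<d}\frac{1}{p_jB_j(\bp_{\hat j})}\partial_j^2\Bigr)
\]
stated in the proposition. Since $\tilde u$ vanishes to order at least one at $\bzer$, Lemma~\ref{lem:horm_simp} collapses $L_1(\tilde u, \underline{\tilde g})$ to precisely the expression defining $C_\bp$.

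Putting everything together, the contribution from $\bp$ to $[t^n]\Delta Q$ takes the claimed form $(p_1\cdots p_d p_t)^{-n}\, n^{-d/2-1}(K_\bp C_\bp + O(n^{-1}))$, with the relative error coming from the $k\geq 2$ terms in~\eqref{eqn:Psi} after taking $N$ in Theorem~\ref{thm:multasm} large enough to absorb the residual $O(n^{-N})$. Summing the contributions of the finitely many $\bp \in W$ yields the stated expansion. The main obstacle is really just careful bookkeeping: confirming the vanishing order of $\tilde u$ at $\bzer$ (so that Lemma~\ref{lem:horm_simp} is invoked in the correct case), checking analyticity of $(1 - p_d e^{i\theta_d})^{-1}$ near $\bt = \bzer$ for each choice of $\nu$ (immediate from $|p_d| < 1$), and tracking signs and square roots through the determinant computation. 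The technical heavy lifting already lives in Lemmas~\ref{lem:partialS} and~\ref{lem:horm_simp}, so what remains is a substitution exercise.
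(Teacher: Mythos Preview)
Your proposal is correct and follows exactly the route the paper takes: the paper's proof is the one-line remark that Proposition~\ref{prop:NegAsm} ``combined with Theorem~\ref{thm:multasm}, directly gives'' the result, after having recorded $\tilde u$, $\tilde g$, and the diagonal Hessian $\tilde g''(\bzer)$ via Lemma~\ref{lem:partialS}. You have simply unpacked that sentence, verifying $\tilde u(\bzer)=0$, reading off $K_\bp$ and $\mathcal H$ from the diagonal Hessian, and invoking the order-one case of Lemma~\ref{lem:horm_simp} to reduce $L_1$ to the displayed $C_\bp$.
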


We now show that the derivatives of $\tilde{g}$ and $\tilde{u}$ simplify substantially, giving Theorem~\ref{thm:NegAsm}.

\begin{proposition}
\label{prop:NegAsm2}
In the situation of Proposition~\ref{prop:NegAsm}, we have 
$$
C_{\bp} = \frac{\overline{S}(\bp)\prod_{j<d} (1+p_j)}{1-p_d} \left[ \frac{1}{A(\bp)p_d(1-p_d)} + \sum_{j=1}^{d-1} \frac{1-p_j}{2p_jB_j(\bp)}\left(\frac{A'_j(\bp)}{A(\bp)} - \frac{B'_j(\bp)}{B(\bp)} \right)\right].
$$
\end{proposition}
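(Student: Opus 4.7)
The plan is to directly compute the two quantities $\mathcal{H}(\tilde u)(\bzer)$ and $\mathcal{H}^2(\tilde u\underline{\tilde g})(\bzer)$ appearing in the formula for $C_\bp$ in Proposition~\ref{prop:NegAsm}, then add them and match against the claimed expression. Throughout, I will use the key factorisation $\tilde u = F\cdot G$ with $F(\bt_{\hat d}) = \prod_{j<d}(1+p_je^{i\theta_j})$ and $G(\bt) = (1-p_d^2 e^{2i\theta_d}\tilde A/\tilde B)/(1-p_de^{i\theta_d})$, noting that $F$ is independent of $\theta_d$ and that $G(\bzer)=0$ because $p_d^2 = B(\bp_{\hat d})/A(\bp_{\hat d})$ at critical points on the stratum $\mV_1$. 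This vanishing of $\tilde u(\bzer)$ places us in the order-one case of Lemma~\ref{lem:horm_simp}, so both terms of $L_1(\tilde u,\underline{\tilde g})$ must be evaluated.

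First I would evaluate $\mathcal{H}(\tilde u)(\bzer)$. A short Taylor expansion of $G$ in $\theta_d$ gives $\partial_d^2\tilde u(\bzer) = 4F(\bzer)/(1-p_d)^2$. For $j<d$, Lemma~\ref{lem:ABsimp} applied to $P=\tilde A$, $Q=\tilde B$ (with the decomposition of Definition~\ref{def:Bpj}) together with the relation $p_d^2=B/A$ yields
\[
\partial_j^2 \tilde u(\bzer) = \frac{2p_j F(\bzer)}{1-p_d}\left(\frac{A'_j}{A} - \frac{B'_j}{B}\right).
\]
Substituting into $\mathcal{H}(\tilde u) = -\tfrac{\oS}{2}\bigl[(p_d/B)\partial_d^2 + \sum_{j<d}(p_jB_j)^{-1}\partial_j^2\bigr]$ gives
\[
-\tfrac{1}{2}\mathcal{H}(\tilde u)(\bzer) = \frac{\oS F(\bzer)}{2(1-p_d)}\left[\frac{2p_d}{B(1-p_d)} + \sum_{j<d}\frac{1}{B_j}\left(\frac{A'_j}{A}-\frac{B'_j}{B}\right)\right].
\]

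Second I would compute $\mathcal{H}^2(\tilde u\underline{\tilde g})(\bzer)$. By Lemma~\ref{lem:horm_simp} only terms with three derivatives landing on $\underline{\tilde g}$ survive, and a Leibniz expansion reduces each fourth partial to sums of products $(\partial_i\tilde u)\cdot(\text{3rd partial of }\tilde g)$. A quick check shows $\partial_j \tilde u(\bzer)=0$ for $j<d$ (both Leibniz terms vanish at $\bzer$) and $\partial_d\tilde u(\bzer) = -2iF(\bzer)/(1-p_d)$; consequently only the off-diagonal pairs $\{j,d\}$ with $j<d$ contribute, and the diagonal term $j=d$ dies because $\partial_d^3 \tilde g(\bzer)=0$ (odd derivatives of $\tilde{\oS}$ in $\theta_d$ vanish by Lemma~\ref{lem:partialS}). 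Using Lemma~\ref{lem:partialS} to obtain $\partial_d\partial_j^2\tilde g(\bzer) = 2ip_j(p_dA'_j - p_d^{-1}B'_j)/\oS$ and the identity $p_dA'_j-p_d^{-1}B'_j = p_d^{-1}B(A'_j/A - B'_j/B)$ (again from $p_d^2=B/A$), the calculation collapses to
\[
-\tfrac{1}{8}\mathcal{H}^2(\tilde u\underline{\tilde g})(\bzer) = -\frac{\oS F(\bzer)}{2(1-p_d)}\sum_{j<d}\frac{1}{B_j}\left(\frac{A'_j}{A}-\frac{B'_j}{B}\right).
\]

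Adding the two contributions, the sums over $j<d$ cancel exactly, leaving $C_\bp = \oS F(\bzer)p_d/(B(1-p_d)^2) = \oS F(\bzer)/(A(\bp)p_d(1-p_d)^2)$. This equals the stated formula because the extra sum $\sum_{j<d}\frac{1-p_j}{2p_jB_j}(A'_j/A-B'_j/B)$ vanishes at every contributing point: when some $p_j=-1$ the outer factor $F(\bzer)=\prod_{k<d}(1+p_k)$ already kills the whole expression, and when all $p_j=1$ each factor $1-p_j$ is zero. The main obstacle is the bookkeeping in Step~3 and in particular spotting the criticality-driven identity that makes the sum term from $\mathcal{H}^2(\tilde u\underline{\tilde g})$ exactly opposite to the unwanted sum produced by $\mathcal{H}(\tilde u)$; without this cancellation the leading asymptotic constant would look much more complicated.
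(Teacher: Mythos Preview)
Your proof is correct and follows essentially the same route as the paper: you decompose $\tilde u$ as a product (the paper writes $\tilde u = XYZ$ with your $G=YZ$), compute $\partial_k^2\tilde u(\bzer)$ via Lemma~\ref{lem:ABsimp}, compute $\mathcal{H}^2(\tilde u\underline{\tilde g})(\bzer)$ by Leibniz expansion together with Lemma~\ref{lem:partialS}, and add. The one noteworthy difference is bookkeeping: you correctly retain the factor $p_j$ in $\partial_j^2\tilde u(\bzer)=\frac{2p_jF(\bzer)}{1-p_d}\bigl(A'_j/A-B'_j/B\bigr)$, whereas the paper silently drops it (harmless since $p_j^2=1$). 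With your tracking the two $\sum_{j<d}\frac{1}{B_j}(\cdots)$ contributions cancel \emph{exactly}, giving the clean closed form $C_\bp=\oS F(\bzer)/(A p_d(1-p_d)^2)$; the paper instead obtains residual terms with coefficients $(1-p_j)/(2p_jB_j)$, which is the form stated in the proposition. Your final reconciliation---the sum term vanishes at every contributing point because either some $p_j=-1$ kills $F(\bzer)$ or all $p_j=1$ kill each $1-p_j$---is valid and completes the match.
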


\begin{proof}
Note that $\partial_k \tilde{g} = - \partial_k \St/\St$. This evaluates to zero at $\bzer$. It follows from Lemma~\ref{lem:ABsimp} that $\partial_k^n \tilde{g}$ evaluates at $\bzer$ to $ - \partial_k^n\St(\bzer)/\St(\bzer)$. Also, when we evaluate $\partial_d\partial_j^2 \tilde{g}$  at $\bzer$, it simplifies to $\partial_d\partial_j^2 \tilde{S}(\bzer)/\tilde{S}(\bzer)$.

Now  define
\begin{align*}
X:= & \prod_{j<d} (1+p_je^{i\theta_j})\\
Y:= & 1 - p_d^2e^{2i\theta_d}\frac{A(\bpht{d}e^{i\bt_{\hat{d}}})}{B(\bpht{d}e^{i\bt_{\hat{d}}})} \\
Z:= & (1 - p_de^{i\theta_d})^{-1}
\end{align*}
so that
$$
\tilde{u} = XYZ.
$$

We first seek to compute
\[
 -\frac{1}{2} \mathcal{H}(\tilde{u})(\bzer) = \frac{\overline{S}(\bp)}{4}  \left( \frac{p_d}{B_d(\bp_{\hat{d}})}\partial_{d}^2 \tilde{u}(\bzer) + \sum_{j < d} \frac{1}{p_jB_j(\bp_{\hat{j}})}\partial_{j}^2 \tilde{u}(\bzer)  \right).
\]
When $k<d$, we have $\partial_k^2 \tilde{u} = \partial^2_k (XYZ)$.
Expanding via the product rule and evaluating at $\bt = \bzer$ we see that each term with $Y$ as a factor yields zero
because $Y$ vanishes at at $\bp$, and each term with $\partial_kY$ as a factor vanishes by 
Lemma~\ref{lem:ABsimp}. This leaves 
$$
\partial_k^2 \tilde{u} = X (\partial_k^2 Y)Z
$$
which simplifies to 
\begin{align*}
\partial^2_k\tilde{u}(\bzer) & = -p_d^2 \frac{\prod_{j<d}(1+p_j)}{1-p_d} \frac{\left(-2A'_k(\bp)B(\bpht{d})+2A(\bpht{d})B'_k(\bp)\right)}{B(\bpht{d})^2} \\
& = 2\frac{B(\bp)}{A(\bp)}\frac{\prod_{j<d}(1+p_j)}{1-p_d} \left[\frac{A'_kB(\bp) - A(\bp)B'_k(\bp)}{B(\bp)^2} \right] \\
& =\frac{2\prod_{j<d}(1+p_j)}{1-p_d} \left[\frac{A'_k(\bp)}{A(\bp)} - \frac{B'_k(\bp)}{B(\bp)} \right]
\end{align*}
by Lemma~\ref{lem:ABsimp}. 

Now consider $k=d$. Then since $X$ is independent of $\theta_d$, 
$\partial^2_d \tilde{u}$ evaluates at $\bt = \bzer$ to  $X \left[(\partial_d^2Y)Z + 2 (\partial_dY)(\partial_dZ)\right]$.
At this point we readily compute $\partial_dY = -2i, \partial_dZ = p_di/(1-p_d)^2, \partial^2_d Y = 4p_d^2 A/B = 4$. Thus 
$$
\partial_d^2 \tilde{u}(\bzer)  = \frac{4 \prod_{j<d}(1+p_j)}{(1-p_d)^2}.
$$
Thus
{\small 
$$
 -\frac{1}{2} \mathcal{H}(\tilde{u})(\bzer) = \frac{\overline{S}(\bp)}{4} \frac{\prod_{j<d} (1+p_j)}{1-p_d} \left[ \sum_{j=1}^{d-1} \frac{2}{p_jB_j(\bp)}\left(\frac{A'_j(\bp)}{A(\bp)} - \frac{B'_j(\bp)}{B(\bp)} \right) + \frac{4}{A(\bp)p_d(1-p_d)}\right].
$$}

We now compute the term $(-1/8) \mathcal{H}^2(\tilde{u}\tilde{g})(\bzer)$. The diagonal nature of the Hessian implies that $\mathcal{H}^2$ 
has the form $\sum_{j,k} c_j c_k \partial_j^2\partial_k^2$.  Lemma~\ref{lem:partialS} now allows further simplification, because it implies that each third partial derivative of the form $\partial_j^3\tilde{g}$ evaluates to zero. Thus, since $\tilde{u}$ vanishes to order at least $1$ at $0$, when we expand $\mathcal{H}^2(\tilde{u}\tilde{g})$ fully the only nonzero terms remaining on evaluation at $\bzer$ are of the form $\partial_d\tilde{u}\partial_d \partial^2_j \tilde{g}$. The coefficient of each such term is $4c_jc_d$.

It is easily computed that 
$$
\partial_d \tilde{u}(\bzer) = X(\bzer) \partial_dY(\bzer) Z(\bzer) =  -2i \frac{\prod_{j<d}(1+p_j)}{1-p_d}.
$$
By Lemma~\ref{lem:partialS},
$$
\partial_d\partial_j^2 \tilde{g}(\bzer) = -\partial_d\partial_j^2 \tilde{S}(\bzer)/\tilde{S}(\bzer) = \frac{2i p_j \left(p_d \tilde{A}'_j(\bzer) - p_d^{-1} \tilde{B}'_j(\bzer)\right)}{\overline{S}(\bp)}.
$$
Thus,
$$(-1/8) \mathcal{H}^2(\tilde{u}\tilde{g})(\bzer) = -\frac{\overline{S}(\bp)\prod_{j<d} (1+p_j)}{2p_d(1-p_d)A(\bp)}\sum_{j<d} \frac{\left(p_d \tilde{A}'_j(\bzer) - p_d^{-1} \tilde{B}'_j(\bzer)\right)}{B_j}$$
and hence, using a little more algebraic simplification (particularly the defining relation for $p_d$), we obtain
$$
C_\bp = \frac{\overline{S}(\bp)\prod_{j<d} (1+p_j)}{1-p_d} \left[ \frac{1}{A(\bp)p_d(1-p_d)} + \sum_{j=1}^{d-1} \frac{1-p_j}{2p_jB_j(\bp)}\left(\frac{A'_j(\bp)}{A(\bp)} - \frac{B'_j(\bp)}{B(\bp)} \right)\right].
$$
\end{proof}

\end{document}